\theoremstyle{plain}
\newtheorem{thm}{Theorem}[section]
\newtheorem*{thm*}{Theorem}
\newtheorem{lem}[thm]{Lemma}
\newtheorem{cor}[thm]{Corollary}
\newtheorem{prop}[thm]{Proposition}
\theoremstyle{remark}
\newtheorem{rem}[thm]{Remark}
\theoremstyle{definition}
\newtheorem{defin}[thm]{Definition}
\DeclareFontFamily{OT1}{pzc}{}
\DeclareFontShape{OT1}{pzc}{m}{it}{<-> s * [1.10] pzcmi7t}{}
\DeclareMathAlphabet{\mathpzc}{OT1}{pzc}{m}{it}
\newcommand{\Dcal}{\mathcal{D}}
\newcommand{\Ecal}{\mathcal{E}}
\newcommand{\Fcal}{\mathcal{F}}
\newcommand{\Gcal}{\mathcal{G}}
\newcommand{\Hcal}{\mathcal{H}}
\newcommand{\Ical}{\mathcal{I}}
\newcommand{\Jcal}{\mathcal{J}}
\newcommand{\Lcal}{\mathcal{L}}
\newcommand{\Mcal}{\mathcal{M}}
\newcommand{\Ocal}{\mathcal{O}}
\newcommand{\Pcal}{\mathcal{P}}
\newcommand{\Ucal}{\mathcal{U}}
\newcommand{\Ycal}{\mathcal{Y}}
\newcommand{\QQ}{\mathbb{Q}}
\newcommand{\ZZ}{\mathbb{Z}}
\newcommand{\CC}{\mathbb{C}}
\newcommand{\HH}{\mathbb{H}}
\newcommand{\con}[1]{\nabla_{#1}}
\newcommand{\Po}{\mathcal{P}} 
\newcommand{\Ed}{E^\vee} 
\newcommand{\id}{\mathrm{id}}
\newcommand{\righteq}{\stackrel{\sim}{\rightarrow}}
\newcommand{\pr}{\mathrm{pr}}
\newcommand{\Def}{\mathrm{Def.}}
\newcommand{\dd}{\mathrm{d}}
\newcommand{\om}{\underline{\omega}}
\newcommand{\HdR}[2]{\underline{H}^{#1}_{\mathrm{dR}}\left( #2 \right)}
\newcommand{\HdRabs}[2]{H^{#1}_{\mathrm{dR}}\left( #2 \right)}
\newcommand{\FKatoD}[2]{\prescript{}{D}{\mathrm{F}^{(#1)}_{#2}}}
\newcommand{\Ln}{\mathcal{L}_n}
\newcommand{\lnD}{l^D_n}
\newcommand{\triv}{\mathrm{triv}}
\newcommand{\HOM}{\underline{\mathrm{Hom}}} 
\newcommand{\Hom}{\mathrm{Hom}} 
\newcommand{\trans}{\mathrm{trans}} 
\newcommand{\inv}{\mathrm{inv}} 
\newcommand{\trv}{\tilde{\mathfrak{t}}}
\newcommand{\VIC}[1]{\mathrm{VIC}\left( #1 \right)}
\newcommand{\Und}[1]{\mathrm{U}^\dagger_n\left( #1 \right)}
\newcommand{\Uninf}{\mathrm{U}_n\left( \Ocal_{E^\dagger} \right)}
\newcommand{\gr}{\mathrm{gr}}
\newcommand{\Logn}{\mathrm{Log}^n_{\mathrm{dR}}}
\newcommand{\Log}[1]{\mathrm{Log}^{#1}_{\mathrm{dR}}}
\newcommand{\intHOM}{\underline{\mathrm{Hom}}} 
\newcommand{\idM}{\mathds{1}}
\newcommand{\res}{\mathrm{res}} 
\newcommand{\polD}{\mathrm{pol}_{D,\mathrm{dR}}}
\newcommand{\LnD}{L_n^D}
\newcommand{\KK}{K}
\newcommand{\scan}{s_{\mathrm{can}}}
\newcommand{\KS}{\mathrm{KS}} 
\newcommand{\thetaD}{\prescript{}{D}\theta}
\DeclareMathOperator{\Spec}{Spec}
\DeclareMathOperator{\Res}{Res}
\DeclareMathOperator{\Sym}{\underline{Sym}}
\DeclareMathOperator{\Inf}{Inf}
\DeclareMathOperator{\TSym}{\underline{TSym}}
\DeclareMathOperator{\Pic}{\underline{\mathrm{Pic}}^0_{E/S}}
\DeclareMathOperator{\Ext}{\mathrm{Ext}}
\title[The elliptic polylogarithm via the Poincar\'e bundle]{The algebraic de Rham realization of the elliptic polylogarithm via the Poincar\'e bundle}
\author{Johannes Sprang}
\email{johannes.sprang@mathematik.uni-regensburg.de }
\date{}
\begin{document}

\begin{abstract}
In this paper, we describe the algebraic de Rham realization of the elliptic polylogarithm for arbitrary families of elliptic curves in terms of the Poincar\'e bundle. Our work builds on previous work of Scheider and generalizes results of Bannai--Kobayashi--Tsuji and Scheider. As an application, we compute the de Rham Eisenstein classes explicitly in terms of certain algebraic Eisenstein series.
\end{abstract}

\maketitle

\section{Introduction}
In the groundbreaking paper \cite{beilinsonConj}, Beilinson has stated his important conjectures expressing special values of $L$-functions up to a rational factor in terms of motivic cohomology classes under the regulator map to Deligne cohomology. For finer integrality questions one has to consider additionally regulator maps to other cohomology theories. In order to study particular cases of these conjectures, one needs to construct such cohomology classes and understand their realizations. An important source of such cohomology classes are polylogarithmic cohomology classes and their associated Eisenstein classes.\par 
The elliptic polylogarithm has been defined by Beilinson and Levin in their seminal paper \cite{beilinson_levin}. It is a motivic cohomology class living on the complement of certain torsion sections of an elliptic curve $E\rightarrow S$. By specializing the elliptic polylogarithm along torsion sections one obtains the associated \emph{Eisenstein classes}. Eisenstein classes have been fruitfully applied for studying special values of $L$-functions of imaginary quadratic fields, e.g. in \cite{deninger} and \cite{kings_tamagawa}. They also proved to be an important tool for gaining a better understanding of $L$-functions of modular forms. They appear in the construction of Kato's celebrated Euler system and more recently in the important works of Bertolini--Darmon--Rotger and Kings--Loeffler--Zerbes.\par 
The aim of this work is to describe the algebraic de Rham realization of the elliptic polylogarithm for arbitrary families of elliptic curves. In the case of a single elliptic curve with complex multiplication, such a description has been given by Bannai--Kobayashi--Tsuji in \cite{BKT}. Building on this, Scheider has generalized this to arbitrary families of complex elliptic curves in his PhD thesis \cite{rene}. Even more importantly, he has given an explicit description of the de Rham logarithm sheaves in terms of the Poincar\'e bundle on the universal vectorial extension of the dual elliptic curve:
\begin{thm*}[Scheider, 2014]\footnote{For a more precise version of this theorem, we refer to \Cref{DR_thmRene} in the main body of the text.}
For an elliptic curve $E/S$ let $E^\dagger$ be the universal vectorial extension of the dual elliptic curve and $(\Po^\dagger,\con{\Po^\dagger})$ the Poincar\'e bundle with connection on $E\times_S E^\dagger$. Then
\[
	\Ln^\dagger:=(\pr_E)_*\left(\Po^\dagger|_{\Inf_e^n E^\dagger} \right)
\]
provides an explicit model for the (abstractly defined) $n$-th de Rham logarithm sheaf $\Logn$.
\end{thm*}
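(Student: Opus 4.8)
The strategy is to verify that $\Ln^\dagger$, equipped with the connection, filtration, splitting and transition maps it inherits from the Poincar\'e bundle, satisfies the universal property characterizing the abstract de Rham logarithm sheaf $\Logn$; uniqueness in that characterization then yields the asserted isomorphism. Throughout I write $\Hcal:=\HdR{1}{E/S}^\vee$, so that $\gr^k\Logn\cong\Sym^k\Hcal$. The basic tool is the $\Ical_e$-adic filtration of $\Ocal_{\Inf_e^nE^\dagger}$: since $\Inf_e^nE^\dagger$ is finite locally free over $S$, the restriction of $\pr_E$ to $E\times_S\Inf_e^nE^\dagger$ is finite locally free, hence $(\pr_E)_*$ is exact and commutes with base change, and the filtration by the ideals $\pr_{E^\dagger}^*\Ical_e^k$ induces a filtration of $\Ln^\dagger$ by subbundles.

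First I would pin down the graded pieces. The graded quotients $\Ical_e^k/\Ical_e^{k+1}\cong\Sym^k\omega_{E^\dagger}$ are supported on the unit section, and the Poincar\'e bundle is rigidified along $E\times_S\{e\}$, i.e. $\Po^\dagger|_{E\times_S\{e\}}\cong\Ocal_E$; hence $\gr^k\Ln^\dagger\cong\Sym^k\omega_{E^\dagger}\otimes_{\Ocal_S}\Ocal_E$. The crucial input is the Mazur--Messing identification $\mathrm{Lie}(E^\dagger)\cong\HdR{1}{E/S}$ (the tangent space at the origin of the universal vectorial extension of the dual is de Rham cohomology), which gives $\omega_{E^\dagger}\cong\Hcal$ and therefore $\gr^k\Ln^\dagger\cong\Sym^k\Hcal$, matching $\Logn$. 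The splitting at the unit section is equally formal: base change along $e\colon S\to E$ together with the rigidification of $\Po^\dagger$ along $\{e\}\times_SE^\dagger$ yields $e^*\Ln^\dagger\cong\Ocal_{\Inf_e^nE^\dagger}\cong\bigoplus_{k=0}^n\Sym^k\Hcal$, the required rigidification, with the $\idM$-section corresponding to $\Sym^0$. Finally, the closed immersions $\Inf_e^{n-1}E^\dagger\hookrightarrow\Inf_e^nE^\dagger$ induce transition maps $\Ln^\dagger\to\mathcal{L}^\dagger_{n-1}$ compatible with all of the above.

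The connection is where the logarithmic content sits. The universal connection $\con{\Po^\dagger}\colon\Po^\dagger\to\Po^\dagger\otimes\pr_E^*\Omega^1_{E/S}$ is $\Ocal_{E^\dagger}$-linear (being a connection relative to $E^\dagger$), hence preserves the filtration by $\pr_{E^\dagger}^*\Ical_e^k$; pushing forward along the finite map $\pr_E$ and applying the projection formula yields an integrable $S$-connection $\nabla\colon\Ln^\dagger\to\Ln^\dagger\otimes\Omega^1_{E/S}$ that respects the filtration (integrability being inherited from $\con{\Po^\dagger}$), so that $\Ln^\dagger$ is unipotent. Because $(\Po^\dagger,\con{\Po^\dagger})$ restricts to the trivial bundle with trivial connection along $E\times_S\{e\}$, the induced connection on each $\gr^k\Ln^\dagger\cong\Sym^k\Hcal\otimes\Ocal_E$ is the canonical relative one, exactly as for $\Logn$.

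What remains---and this is the heart of the matter---is to show that the extension data of $\Ln^\dagger$ reproduces the canonical one defining $\Logn$. It suffices to treat the first level: in the category of $\Ocal_E$-modules with integrable $S$-connection one must compute the class of
\[
0\longrightarrow\Hcal\otimes\Ocal_E\longrightarrow\mathcal{L}^\dagger_1\longrightarrow\Ocal_E\longrightarrow0
\]
in $\Ext^1\left(\Ocal_E,\Hcal\otimes\Ocal_E\right)\cong\HdR{1}{E/S}\otimes\Hcal\cong\mathrm{End}\left(\HdR{1}{E/S}\right)$ and identify it with $\id$. This is precisely where the moduli interpretation of $E^\dagger$ enters: $E^\dagger$ represents degree-zero line bundles equipped with an integrable connection, $(\Po^\dagger,\con{\Po^\dagger})$ is the universal such object, and $\mathcal{L}^\dagger_1$ is its restriction to the first-order neighbourhood of the origin. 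Unwinding this universal property---equivalently, computing the relevant $\KS$-map---identifies the first-order family with the tautological deformation, whose extension class is the identity. I expect this computation to be the main obstacle, as it is the only step genuinely using the fine moduli property of the universal vectorial extension rather than formal properties of pushforward and rigidification. Once it is in place, the higher levels follow from the universal property of $\Logn$: the compatibility of the $\Ical_e$-adic filtrations with the group law on $\Inf_e^\bullet E^\dagger$ matches the coalgebra structure underlying $\Logn$, and uniqueness upgrades the level-one isomorphism to $\Ln^\dagger\cong\Logn$ for all $n$, compatibly with connections, splittings and transition maps.
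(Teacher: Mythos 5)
Your outline is, in essence, Scheider's \emph{original} strategy: identify $\Lcal_1^\dagger$ with $\Log{1}$ by computing an extension class, then propagate to all $n$. The formal preliminaries you give are fine (exactness of $(\pr_E)_*$, the filtration by $\pr_{E^\dagger}^*\Ical_e^k$, the identification $\gr^k\Ln^\dagger\cong \Sym^k\Hcal\otimes_{\Ocal_S}\Ocal_E$ via $\omega_{E^\dagger/S}\cong\Hcal$, the rigidification along $e$, and the fact that $\con{\Po^\dagger}$ is linear over $\Ocal_{E^\dagger}$ and hence pushes forward to a filtration-preserving integrable $S$-connection). But the proposal stops exactly where the proof begins. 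The assertion that the class of $0\to\Hcal_E\to\Lcal_1^\dagger\to\Ocal_E\to 0$ equals $\id_{\Hcal}$ is never established: ``unwinding this universal property \ldots identifies the first-order family with the tautological deformation'' is a restatement of the claim, not an argument, and you yourself flag it as the main obstacle. That computation \emph{is} the theorem; in Scheider's thesis it occupies the bulk of a long and involved proof. There is a second gap: ``uniqueness upgrades the level-one isomorphism to $\Ln^\dagger\cong\Logn$ for all $n$'' does not follow. Since $\Logn:=\TSym^n\Log{1}$ by definition, level one only gives $\TSym^n\Lcal_1^\dagger\cong\Logn$; what is missing is a horizontal isomorphism $\Ln^\dagger\righteq\TSym^n\Lcal_1^\dagger$ respecting the marked sections. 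Producing it requires the character (biextension) property of $\Po^\dagger$ in the $E^\dagger$-variable to get comultiplication maps $\Lcal_{k+l}^\dagger\to\Lcal_k^\dagger\otimes\Lcal_l^\dagger$, and then a check on graded pieces in characteristic zero; the one-sentence appeal to ``the coalgebra structure'' does not do this. A minor further point: the comparison must take place in the relative category $\VIC{E/S}$ (the geometric bundle a priori carries only an $S$-connection), and the group you write as $\Ext^1\left(\Ocal_E,\Hcal\otimes\Ocal_E\right)\cong\mathrm{End}\left(\HdR{1}{E/S}\right)$ has extra contributions from the base that are only killed after using the splitting along $e$; this is fixable but has to be said.

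For contrast, the paper deliberately avoids both gaps by never touching extension classes. It first characterizes $(\Logn,\idM^{(n)})$ by a universal property (\Cref{log_univ}): it is initial among unipotent objects of length $n$ with a marked horizontal section along $e$. It then shows (\Cref{DR_thmUnivLog}) that $(\Ln^\dagger,\con{\Ln^\dagger},1)$ satisfies the \emph{same} universal property, not by computation but by transport of structure through Laumon's Fourier--Mukai equivalence: $\Phi_{\Po^\dagger}$ restricts to an equivalence $\Uninf\righteq\Und{E/S}$ (\Cref{prop_FMequiv}) sending $\Ocal_{E^\dagger}/\Jcal^{n+1}$ to $\Ln^\dagger$ and satisfying $e^*\hat{\Fcal}^\dagger\cong(\pi_n)_*\Fcal$ (\Cref{DR_propFM}); since $\left(\Ocal_{E^\dagger}/\Jcal^{n+1},1\right)$ is trivially initial on the source side, $(\Ln^\dagger,1)$ is initial on the target side. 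This handles all $n$ at once, needs no comultiplication and no extension-class computation, and is precisely why the paper's proof is short. To complete your route you would have to carry out the class computation honestly (as Scheider did, or analytically on the universal curve); alternatively, reorganize around the universal property as the paper does.
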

For a positive integer $D$, the de Rham polylogarithm is a pro-system of cohomology classes with values in the logarithm sheaves, more explicitly
\[
	\polD\in\varprojlim_n \HdRabs{1}{E\setminus E[D],\Logn}.
\]
Building on this, Scheider used certain theta functions of the Poincar\'e bundle to construct \emph{analytic} differential forms representing the de Rham polylogarithm class for families of complex elliptic curves. Here, he followed the approach of Bannai--Kobayashi--Tsuji where a similar construction has been given for elliptic curves with complex multiplication. For all arithmetic applications it is indispensable to have an explicit \emph{algebraic} representative of the polylogarithm class. In the CM case studied by Bannai--Kobayashi--Tsuji, it is possible to prove the algebraicity of the coefficient functions of the involved theta function using the algebraicity of the Hodge decomposition. This leads to a purely algebraic description of the de Rham polylogarithm for CM elliptic curves. Unfortunately, this approach fails for arbitrary families of elliptic curves and thus it does not apply to the situation studied by Scheider.\par 
In this work, we address this problem and construct \emph{algebraic} differential forms with values in the logarithm sheaves representing the de Rham polylogarithm for families of elliptic curves. For an elliptic curve $E/S$ and a positive integer $D$, we have defined a certain $1$-form with values in the Poincar\'e bundle $\scan^D\in\Po^\dagger\otimes \Omega^1_{E/S}$, c.f.~\cite{EisensteinPoincare}. This section $\scan^D$ is called \emph{the Kronecker section} and serves as a substitute for the analytic theta functions appearing in the work of Bannai--Kobayashi--Tsuji and Scheider. Scheider's theorem allows us to view 
\[
	\lnD:=(\pr_E)_*\left( \scan^D\Big|_{\Inf^n_e E^\dagger} \right)\in\Gamma(E\setminus E[D],\Ln^\dagger\otimes \Omega^1_{E/S}).
\]
as a $1$-form with values in the $n$-th de Rham logarithm sheaf. In a second step, we lift these relative $1$-forms to absolute $1$-forms
\[
	\LnD\in \Gamma(E\setminus E[D],\Ln^\dagger\otimes \Omega^1_{E}).
\]
 Now, our main result states that the pro-system $(\LnD)_n$ gives explicit algebraic representatives of the de Rham polylogarithm class:
 \begin{thm*}\footnote{For a more precise version of this theorem, we refer to \Cref{DR_Pol_thm} in the main body of the text.} Let $E/S$ be a family of elliptic curves over a smooth scheme $S$ over a field of characteristic zero. The $1$-forms $\LnD$ form explicit algebraic representatives of the de Rham polylogarithm class, i.e.:
\[
	([\LnD])_n=\polD \in \varprojlim_n \HdRabs{1}{E\setminus E[D],\Logn}.
\]
\end{thm*}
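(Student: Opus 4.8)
The plan is to exploit the standard characterization of the de Rham polylogarithm by its residue, combined with a rigidity coming from the cohomology of the logarithm sheaves. Recall that the logarithm sheaves satisfy the vanishing $\varprojlim_n \HdRabs{1}{E, \Logn} = 0$, so that in the localization sequence
\[
\varprojlim_n \HdRabs{1}{E, \Logn} \to \varprojlim_n \HdRabs{1}{E \setminus E[D], \Logn} \xrightarrow{\res} \varprojlim_n \bigoplus_{t \in E[D]} \Logn|_t
\]
the residue map becomes injective on the projective limit. Consequently, $\polD$ is the \emph{unique} class whose residue along $E[D]$ is the prescribed canonical section entering the definition of the polylogarithm. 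The proof thus reduces to three verifications about the pro-system $(\LnD)_n$: that each $\LnD$ is closed and hence defines a class in $\HdRabs{1}{E \setminus E[D], \Logn}$; that these classes are compatible under the transition maps $\Log{n+1} \to \Logn$; and that the residue of $[\LnD]$ agrees with that of $\polD$.

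For the compatibility and the underlying relative statement I would argue as follows. By Scheider's theorem the relative form $\lnD = (\pr_E)_*(\scan^D|_{\Inf^n_e E^\dagger})$ is a section of $\Ln^\dagger \otimes \Omega^1_{E/S} \cong \Logn \otimes \Omega^1_{E/S}$, and its closedness for the \emph{relative} connection is built into the horizontality properties of the Kronecker section $\scan^D$ established in \cite{EisensteinPoincare}, after applying $(\pr_E)_*$ and restricting to the $n$-th infinitesimal neighbourhood. The compatibility under the transition maps follows directly from the compatibility of the restrictions $\scan^D|_{\Inf^n_e E^\dagger}$ along the closed immersions $\Inf^n_e E^\dagger \hookrightarrow \Inf^{n+1}_e E^\dagger$, which intertwine the pushforwards $(\pr_E)_*$ with the structural maps defining the pro-system of logarithm sheaves.

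The genuinely new and delicate point is the passage from the relative form $\lnD$ to the \emph{absolute} form $\LnD \in \Gamma(E \setminus E[D], \Ln^\dagger \otimes \Omega^1_E)$ and the verification that $\LnD$ is closed for the \emph{absolute} de Rham differential on $\Logn$, i.e.\ integrable. Here one must add correction terms in the base directions and check that the resulting form is annihilated by the absolute connection. This amounts to a compatibility between the connection $\con{\Po^\dagger}$ on the Poincar\'e bundle in the base directions, the Gauss--Manin connection, and the filtration on $\Logn$ induced by the infinitesimal thickenings. I expect this integrability computation --- controlling the curvature contribution and matching it against the Kodaira--Spencer data --- to be the main obstacle, since it requires explicit control of the full connection on $\Po^\dagger$, not merely its relative part.

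Finally, for the residue I would compute $\res(\LnD)$ directly from the residue of the Kronecker section. Since residues along the sections $E[D]$ are insensitive to the added base-direction correction terms --- those terms are pulled back from $S$ and hence regular in the normal direction to the divisor --- the residue of $[\LnD]$ equals the pushforward along $(\pr_E)_*$ of the residue of $\scan^D|_{\Inf^n_e E^\dagger}$, which by \cite{EisensteinPoincare} is precisely the canonical section normalizing the polylogarithm. Matching this with the defining residue of $\polD$ and invoking the injectivity of $\res$ in the limit then yields $([\LnD])_n = \polD$, completing the argument.
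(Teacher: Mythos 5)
Your skeleton is the same as the paper's: $\polD$ is pinned down by its residue (via the injectivity of $\Res$ on the projective limit), so one must check that the $\LnD$ are closed, compatible, and have residue $D^2 1_{e}-1_{E[D]}$. But at both points where the real work happens, your proposal records the difficulty instead of resolving it. The first is the absolute closedness $\dd^{(1)}(\LnD)=0$: relative closedness is vacuous (since $\Omega^2_{E/S}=0$), and nothing in \cite{EisensteinPoincare} gives the absolute statement --- you yourself call it ``the main obstacle'' and then stop. The paper proves it (\Cref{DR_liftingLnD}) by reducing \'{e}tale-locally to the universal elliptic curve with $\Gamma_1(N)$-level structure, analytifying, computing the absolute connection explicitly in the basis $\tilde{\omega}^{[i,j]}$ (\Cref{cor_absoluteconnection}), and observing that the vanishing is exactly the mixed heat equation $2\pi i\,\partial_\tau J=\partial_z\partial_w J$ of the Jacobi theta function (\Cref{sec_proof}). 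Without this computation there is no well-defined class $[\LnD]$ to compare with $\polD$ in the first place.

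The second gap is the residue. You claim that the residue of $(\LnD)_n$ ``by \cite{EisensteinPoincare} is precisely the canonical section'' $D^2 1_{e}-1_{E[D]}$ sitting in degree zero. That is not in \cite{EisensteinPoincare}, and it is not a formal consequence of functoriality of residues: analytically, the residue of $\lnD$ at a nonzero $D$-torsion point carries a quasi-periodicity factor of the theta function (an exponential in the $w$-variable), hence has a priori nonzero components in \emph{every} symmetric-power degree; these components disappear only after untwisting by the isogeny-invariance identification $i_D^*\Logn\cong\pi_{E[D]}^*e^*\Logn$, and verifying that they then vanish is exactly the computation your argument omits. The paper sidesteps this entirely by the same reduction to the universal curve $\Ecal/\Mcal$: there $\HdRabs{0}{\Mcal,\Sym^k\Hcal}=0$ for $k\geq 1$ (\Cref{DR_Log_lemSymHdR}, via Riemann--Hilbert and $H^0(\Gamma_1(N),\Sym^k\ZZ^2)=0$), so the higher-degree residue components live in zero groups and only the degree-zero residue must be computed --- and that follows from $l_0^D=\dlog \thetaD$ (\Cref{prop_KatoSiegel}) together with the divisor $D^2[e]-[E[D]]$ of the Kato--Siegel function. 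This base-change reduction to the universal case, absent from your proposal, is the load-bearing step that makes both the closedness proof and the residue computation tractable; to complete your argument you would need either to import it or to supply the two computations it replaces.
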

As a byproduct, we deduce explicit formulas for the de Rham Eisenstein classes in terms of certain holomorphic Eisenstein series.\par 
The results of this paper depend heavily on the results of the unpublished PhD thesis of Scheider. In particular, Scheider's explicit description of the de Rham logarithm sheaves in terms of the Poincar\'e bundle will play a fundamental role in this paper. Scheider's original proof of this result is long and involved. A substantial part of this paper is devoted to making the results of Scheider available to the mathematical community. At the same time, we will simplify the proof of Scheider's theorem considerably.\par 
Our main motivation for this work comes from the wish of gaining a better understanding of the syntomic realization of the elliptic polylogarithm. Syntomic cohomology can be seen as a $p$-adic analogue of Deligne cohomology and replaces Deligne cohomology in the formulation of the $p$-adic Beilinson conjectures.  Till now, we only understand the syntomic realization in the case of a single elliptic curve with complex multiplication \cite{BKT} as well as the specializations of the syntomic polylogarithm along torsion sections, i.e.~the syntomic Eisenstein classes \cite{bannai_kings}. Building on the results of this paper, we generalize the results of \cite{bannai_kings} and \cite{BKT} and provide an explicit description of the syntomic realization over the ordinary locus of the modular curve in \cite{Syntomic}. Here, it is indispensable to have explicit \emph{algebraic} representatives for the de Rham polylogarithm class.\par 
The polylogarithm can also be defined for higher dimensional Abelian schemes and is expected to have interesting arithmetic applications. While we have a good understanding of the elliptic polylogarithm, not much is known in the higher dimensional case. 
Combining the results of this paper with Scheider's results gives a conceptional understanding of the elliptic polylogarithm in terms of the Poincar\'e bundle. We expect that the general structure of the argument should allow the generalization to higher dimensional Abelian schemes. A good understanding of the de Rham realization is an essential step towards the realization in Deligne and syntomic cohomology.\par

\section*{Acknowledgement}
The results presented in this paper are part of my PhD thesis at the Universit\"at Regensburg \cite{PhD}. It is a pleasure to thank my advisor Guido Kings for his guidance during the last years. Further, I would like to thank Shinichi Kobayashi for all the valuable suggestions on my PhD thesis. I am also grateful for interesting discussions with Takeshi Tsuji in Lyon and Laurent Berger for his hospitality while visiting Lyon. The author would also like to thank the collaborative research centre SFB 1085 ``Higher Invariants'' by the Deutsche Forschungsgemeinschaft for its support. Last but not least, I would like to thank the referees for valuable comments and remarks.

\section{The de Rham logarithm sheaves}
The aim of this section is to define the pro-system of the de Rham logarithm sheaves. The de Rham logarithm sheaves satisfy a universal property among all unipotent vector bundles with integrable connections. In this section we will present the basic properties of the de Rham logarithm sheaves, these have been worked out by Scheider in his PhD thesis \cite{rene}.

\subsection{Vector bundles with integrable connections} 
The coefficients for algebraic de Rham cohomology are vector bundles with integrable connections. Let us start by with recalling some basic definitions on vector bundles with integrable connections. For details we refer to \cite[(1.0),(1.1)]{katz_monodromy}. For a smooth morphism $\pi:S\rightarrow T$ between smooth separated schemes of finite type over a field $\KK$  of characteristic $0$ let us denote by $\VIC{S/T}$ the category of vector bundles on $S$ with integrable $T$-connection and horizontal maps as morphisms. Since every coherent $\Ocal_S$-module with integrable $\KK$-connection is a vector bundle, the category $\VIC{S/\KK}$ is Abelian (cf.~\cite[\S2, Note 2.17]{berthelot_ogus}). The pullback along a smooth map $\pi:S\rightarrow T$ of smooth $\KK$-schemes induces an exact functor
\[
	\pi^* :\VIC{T/\KK}\rightarrow \VIC{S/\KK}.
\] 
By restricting the connection we get a forgetful map
\[
	\res_T\colon \VIC{S/\KK}\rightarrow \VIC{S/T}.
\]
To an object $\Fcal$ in $\VIC{S/T}$ we can associate a complex of $\pi^{-1}\Ocal_T$-modules
\[
	\Omega^\bullet_{S/T}(\Fcal)\colon \Fcal \rightarrow \Fcal\otimes_{\Ocal_S}\Omega^1_{S/T}\rightarrow ...
\]
called the \emph{algebraic de Rham complex}. The differentials in this complex are induced by the integrable connection. The relative algebraic de Rham cohomology for $\pi\colon S\rightarrow T$ is defined as
\[
	\HdR{i}{S/T,\Fcal}:=R^i\pi_*\left( \Omega^\bullet_{S/T}(\Fcal) \right).
\]
For $T=\Spec \KK$ we obtain the absolute algebraic de Rham cohomology $\HdRabs{i}{S,\Fcal}$. For $i=0,1$ the $i$-th de Rham cohomology can be seen as an extension group in the category $\VIC{S/\KK}$, i.e.
\[
	\HdRabs{i}{S,\Fcal}=\Ext^i_{\VIC{S/\KK}}(\Ocal_S,\Fcal),\quad \text{ for } i=0,1.
\]
For $\Fcal\in\VIC{X/T}$ and smooth morphisms $f: X\rightarrow S$ and $S\rightarrow T$ the relative de Rham cohomology
\[
	\HdR{i}{X/S,\Fcal}
\]
is canonically equipped with an integrable $T$-connection called \emph{Gauss--Manin} connection.

\subsection{Definition of the de Rham logarithm sheaves} Let $\pi:E\rightarrow S$ be an elliptic curve over a smooth separated $K$-scheme of finite type. Let us write $\Hcal:=\HdR{1}{E/S}^\vee$ for the dual of the relative de Rham cohomology and $\Hcal_E:=\pi^*\Hcal$ for its pullback to the elliptic curve. The Gauss--Manin connection equips $\Hcal_E$ with an integrable $\KK$-connection. The group $\Ext^1_{\VIC{E/\KK}}(\Ocal_E,\Hcal_E)$ classifies isomorphism classes $[\Fcal]$ of extensions
\begin{equation}\label{eq_ext1}
\begin{tikzcd}
	0\ar[r] & \Hcal_E \ar[r] & \Fcal \ar[r] & \Ocal_E \ar[r] & 0
\end{tikzcd}
\end{equation}
in the category $\VIC{E/\KK}$. In general such an extension will have non-trivial automorphisms. In the case where the extension \eqref{eq_ext1} splits horizontally after pullback along the unit section $e\colon S\rightarrow E$ we can rigidify the situation by fixing a splitting, i.e. an isomorphism
\[
\begin{tikzcd}
	0\ar[r] & \Hcal \ar[r] & \Hcal\oplus \Ocal_S\ar[d,"\cong"] \ar[r] & \Ocal_S \ar[r] & 0\\
	0\ar[r] & e^*\Hcal_E \ar[r]\ar[u,equals] & e^*\Fcal \ar[r] & \Ocal_S \ar[r]\ar[u,equals] & 0	
\end{tikzcd}
\]
in the category $\VIC{S/\KK}$. Such a horizontal splitting is uniquely determined by the image of $1\in\Gamma(S,\Ocal_S)$ under the splitting. In other words, the group
\[
	\ker\left( e^*\colon\Ext^1_{\VIC{E/\KK}}(\Ocal_E,\Hcal_E)\rightarrow \Ext^1_{\VIC{S/\KK}}(\Ocal_S,\Hcal)  \right)
\]
classifies isomorphism classes of pairs $[\Fcal,s]$ consisting of an extension of $\Ocal_E$ by $\Hcal_E$ in the category $\VIC{E/\KK}$ together with a horizontal section $s\in\Gamma(S,e^*\Fcal)$ mapping to $1$ under $e^*\Fcal\rightarrow\Ocal_S$. A pair $[F,s]$ is uniquely determined by its extension class up to unique isomorphism.\par 

 The Leray spectral sequence in de Rham cohomology 
\[
	E_2^{p,q}=\HdRabs{p}{S/\KK,\HdR{q}{E/S,\Hcal_E}}\implies E^{p+q}=\HdRabs{p+q}{E/\KK,\Hcal_E}
\]
gives a split short exact sequence
\[
\begin{tikzcd}[column sep=small]
	0\ar[r] & \Ext^1_{\VIC{S/\KK}}(\Ocal_S,\Hcal)\ar[r,shift left,"\pi^*"] & \ar[l,shift left,"e^*"]\Ext^1_{\VIC{E/\KK}}(\Ocal_E,\Hcal_E) \ar[r] & \Hom_{\VIC{S/\KK}}(\Ocal_S,\Hcal\otimes_{\Ocal_S}\Hcal^\vee)\ar[r] & 0.
\end{tikzcd}
\]
\begin{defin}\label{def_log}
	Let $[\mathrm{Log}^1_{\mathrm{dR}},\idM^{(1)}]$ be the unique extension class corresponding to $\id_{\Hcal}$ under the isomorphism
	\[
		\ker\left( e^*\colon\Ext^1_{\VIC{E/\KK}}(\Ocal_E,\Hcal_E)\rightarrow \Ext^1_{\VIC{S/\KK}}(\Ocal_S,\Hcal)  \right)\cong\Hom_{\VIC{S/\KK}}(\Ocal_S,\Hcal\otimes_{\Ocal_S}\Hcal^\vee).
	\]
	This pair $(\mathrm{Log}^1_{\mathrm{dR}},\idM^{(1)})$ is uniquely determined up to unique isomorphism. This pair is called the \emph{first de Rham logarithm sheaf}.
\end{defin}
The tensor product in the category $\VIC{E/\KK}$ allows us to define for $n\geq 0$ an integrable connection on the $n$-th tensor symmetric powers
\[
	\Logn:=\TSym^n \mathrm{Log}^1_{\mathrm{dR}}.
\] 
For details on symmetric tensors, let us for example refer to \cite[Ch. IV,\S 5]{bourbaki_algebra2}. Recall that the $n$-th tensor symmetric power is the sheaf of invariants of the $n$-th tensor power under the action of the symmetric groups $S_n$. The shuffle product defines a ring structure on the tensor symmetric powers and we obtain a graded ring 
\[
	\bigoplus_{k\geq 0} \TSym^k \mathrm{Log}^1_{\mathrm{dR}}.
\]
The map
\[
	\mathrm{Log}^1_{\mathrm{dR}}\rightarrow \TSym^k \mathrm{Log}^1_{\mathrm{dR}},\quad x\mapsto x^{[k]}:=\underbrace{x\otimes...\otimes x}_{k}
\]
defines divided powers on the graded algebra of symmetric tensors. In particular, the horizontal section $\idM^{(1)}$ induces a horizontal section
\[
	\idM^{(n)}:=(\idM^{(1)})^{[n]} \in \Gamma(S,e^*\Logn).
\]
This allows us to define the \emph{$n$-th de Rham logarithm sheaves} as the pair $(\Logn,\idM^{(n)})$. Later we will see that the \emph{$n$-th de Rham logarithm sheaf} is uniquely determined by a universal property, c.f.~\Cref{log_univ}. The horizontal epimorphism
$\mathrm{Log}^1_{\mathrm{dR}}\twoheadrightarrow \Ocal_E$ induces horizontal transition maps
\[
	\Logn \twoheadrightarrow \mathrm{Log}^{n-1}_{\mathrm{dR}}.
\]
The transition maps allow us to define a descending filtration
\[
	A^0\Logn=\Logn\supseteq A^1\Logn \supseteq ... \supseteq A^{n+1}\Logn=0
\]
by sub-objects
\[
	A^i\Logn:= \ker\left( \Logn\twoheadrightarrow \mathrm{Log}^{n-i}_{\mathrm{dR}} \right),\quad i=1,...,n. 
\]
in the category $\VIC{E/\KK}$. The graded pieces of this filtration are given by
\[
	\gr^i_A \Logn=A^{i}\Logn/A^{i+1}\Logn=\TSym^i \Hcal_E.
\]

\subsection{The cohomology of the de Rham logarithm sheaves}
For the definition of the de Rham polylogarithm we will need a good understanding of the cohomology of the logarithm sheaves. The necessary computations are the same as for other realizations. In the de Rham realization, they can also be found in Scheider's PhD thesis \cite[\S 1.2]{rene}. Let us briefly recall the arguments for the convenience of the reader.
\begin{prop}\label{prop_cohomLog}
	For $i=0,1$ the transition map $\Logn\twoheadrightarrow \mathrm{Log}^{n-1}_{\mathrm{dR}}$ induces the zero morphism
	\[
		\HdR{i}{E/S,\Log{n}}\rightarrow \HdR{i}{E/S,\Log{n-1}}.
	\]
	For $i=2$ we have isomorphisms
	\[
		\HdR{2}{E/S,\Log{n}}\righteq \HdR{2}{E/S,\Log{n-1}}\righteq ... \righteq \Ocal_S.
	\]
\end{prop}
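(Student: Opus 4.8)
The plan is to reduce the statement to the cohomology of the graded pieces of the filtration $A^\bullet\Logn$ and then to induct on $n$ by means of the fundamental short exact sequence
\[
  0 \longrightarrow \TSym^n\HE \longrightarrow \Logn \longrightarrow \Log{n-1} \longrightarrow 0,
\]
whose kernel is the bottom graded piece $\TSym^n\HE$ and whose surjection is the transition morphism. First I would record the cohomology of the graded pieces. Since $\TSym^i\HE=\pi^*\TSym^i\Hcal$ carries the pullback relative connection, the projection formula yields
\[
  \HdR{j}{E/S,\TSym^i\HE}\cong \TSym^i\Hcal\otimes_{\Ocal_S}\HdR{j}{E/S}.
\]
Combined with $\HdR{0}{E/S}\cong\Ocal_S$, $\HdR{1}{E/S}\cong\Hcal^\vee$ and the trace isomorphism $\HdR{2}{E/S}\righteq\Ocal_S$, this determines the cohomology of every graded piece in degrees $0,1,2$, namely $\TSym^i\Hcal$, $\TSym^i\Hcal\otimes\Hcal^\vee$ and $\TSym^i\Hcal$.

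Next I would feed the fundamental sequence into the long exact sequence of relative de Rham cohomology. All three assertions then reduce to statements about the two connecting homomorphisms
\[
  \delta_0\colon \HdR{0}{E/S,\Log{n-1}}\to\HdR{1}{E/S,\TSym^n\HE},\qquad
  \delta_1\colon \HdR{1}{E/S,\Log{n-1}}\to\HdR{2}{E/S,\TSym^n\HE}.
\]
Indeed, injectivity of $\delta_0$ forces the transition map to vanish in degree $0$, while bijectivity of $\delta_1$ forces it to vanish in degree $1$ and simultaneously forces the transition map in degree $2$ to be an isomorphism. Since $\HdR{2}{E/S,\Log0}=\HdR{2}{E/S,\Ocal_E}\cong\Ocal_S$, iterating the degree-$2$ isomorphism down to $n=0$ produces the asserted chain ending in $\Ocal_S$. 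Thus it suffices to prove, for every $n\ge 1$, that $\delta_0$ is injective and $\delta_1$ is an isomorphism.

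Finally, the connecting homomorphisms are cup (Yoneda) product with the extension class of the fundamental sequence. By functoriality of $\TSym$ applied to $0\to\HE\to\Log1\to\Ocal_E\to0$, this class descends from the defining class of $\Log1$, which by \Cref{def_log} corresponds to $\id_\Hcal$. Under the projection-formula identifications, cup product with it becomes a combination of the natural multiplication and contraction maps on symmetric powers together with the cup-product pairing $\HdR{1}{E/S}\otimes_{\Ocal_S}\HdR{1}{E/S}\to\HdR{2}{E/S}\cong\Ocal_S$. For $n=1$ this gives $\delta_0(1)=\id_\Hcal\neq 0$ and exhibits $\delta_1\colon\Hcal^\vee\to\Hcal$ as the Poincar\'e duality isomorphism; the general case runs along the same lines. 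The crux, and the step I expect to be the main obstacle, is exactly this identification: one must carefully track the extension class through the symmetric-power construction and then prove non-degeneracy of the resulting map. The decisive input is the perfectness of the cup-product (Poincar\'e duality) pairing on $\HdR{1}{E/S}$, which makes $\delta_1$ bijective; a rank count---by induction the source $\HdR{1}{E/S,\Log{n-1}}$ has rank $n+1$, equal to that of the target $\TSym^n\Hcal$---confirms consistency and keeps the induction closing.
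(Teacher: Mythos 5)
Your reduction is correct, and your $n=1$ case is exactly the paper's: for the sequence $0\to\TSym^n\HE\to\Logn\to\Log{n-1}\to0$ the three assertions are equivalent to injectivity of $\delta_0$ and bijectivity of $\delta_1$, and for $n=1$ these follow from $\delta_0(1)=\id_{\Hcal}$ (the defining property of the extension class of $\Log{1}$) together with the perfectness of the cup product pairing on $\HdR{1}{E/S}$. The genuine gap is the sentence ``the general case runs along the same lines.'' For $n\geq 2$ that is precisely where the content lies, and it is not a repetition of $n=1$: the extension class of your fundamental sequence is no longer tautologically $\id_{\Hcal}$; the groups you must feed into the connecting maps, namely $\HdR{0}{E/S,\Log{n-1}}\cong\TSym^{n-1}\Hcal$ and $\HdR{1}{E/S,\Log{n-1}}\cong\mathrm{coker}\bigl(\delta_0 \text{ at level } n-1\bigr)$, are themselves only determined recursively through the induction; and the claim that the resulting multiplication/contraction maps on symmetric powers are injective resp.\ bijective is exactly the non-degeneracy statement that you yourself flag as ``the main obstacle'' and never prove. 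Note also that there is no cheap inductive shortcut inside your setup: any map from the level-$n$ fundamental sequence to the level-$(n-1)$ one having the transition maps as vertical arrows is forced to be zero on the kernel terms (the transition map kills $\TSym^n\HE$), so the induction hypothesis cannot be transported through the sub-objects, and the resulting commutative squares yield nothing beyond what the long exact sequence already says.

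This is the difficulty the paper's induction is engineered to avoid. The paper compares consecutive levels through short exact sequences whose sub-object terms are of the form $\Hcal_E\otimes\Log{m}$, so that by the projection formula the induction hypothesis applies directly to the vertical maps on those terms (zero in degrees $0,1$, isomorphism in degree $2$); the vanishing of $\HdR{i}{E/S,\Logn}\to\HdR{i}{E/S,\Ocal_E}$ for $i=0,1$ then follows because this map factors through a transition map covered by the induction hypothesis, and the degree-$2$ isomorphism drops out of a five-lemma type chase. Poincar\'e duality and the identification of a connecting homomorphism are needed only once, at $n=1$. To complete your proposal you would either have to carry out in full the symmetric-power computation of $\delta_0$ and $\delta_1$ for all $n$ (identify them with the maps induced by $\id_{\Hcal}$ and prove non-degeneracy, e.g.\ by an explicit basis argument), or restructure the induction along the paper's lines; as written, the proof is incomplete at its decisive step.
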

\begin{proof} This is a classical result due to Beilinson and Levin \cite{beilinson_levin}. For the de Rham realization see also \cite[Thm 1.2.1.]{rene}\label{prop_transition}. For the convenience of the reader let us include a proof. For $n\geq 1$ consider the long exact sequence\\
\begin{tikzpicture}[descr/.style={fill=white,inner sep=1.5pt}]
        \matrix (m) [
            matrix of math nodes,
            row sep=2em,
            column sep=2.5em,
            text height=1.5ex, text depth=0.25ex
        ]
        { 0 & \HdR{0}{E/S,\Hcal_E\otimes\Log{n-1}} & \HdR{0}{E/S,\Log{n}} & \HdR{0}{E/S} & \\
            & \HdR{1}{E/S,\Hcal_E\otimes\Log{n-1}} & \HdR{1}{E/S,\Log{n}} & \HdR{1}{E/S} & \\
            & \HdR{2}{E/S,\Hcal_E\otimes\Log{n-1}} & \HdR{2}{E/S,\Log{n}} & \HdR{2}{E/S} & 0 \\
        };

        \path[overlay,->, font=\scriptsize,>=latex]
        (m-1-1) edge (m-1-2)
        (m-1-2) edge (m-1-3)
        (m-1-3) edge (m-1-4)
        (m-1-4) edge[out=355,in=175,black] node[descr,yshift=0.3ex] {$\delta^0$} (m-2-2)
        (m-2-2) edge (m-2-3)
        (m-2-3) edge (m-2-4)
        (m-2-4) edge[out=355,in=175,black] node[descr,yshift=0.3ex] {$\delta^1$} (m-3-2)
        (m-3-2) edge (m-3-3)
        (m-3-3) edge (m-3-4)
        (m-3-4) edge (m-3-5);
\end{tikzpicture}\\
associated to
\[
	0\rightarrow \Hcal_E\otimes\Log{n-1} \rightarrow \Log{n} \rightarrow \Ocal_E\rightarrow 0.
\]
Let us first consider the case $n=1$: It follows from the defining property of the extension class of $\Log{1}$ that the map $\delta^0\colon \Ocal_S=\HdR{0}{E/S}\rightarrow \HdR{1}{E/S,\Hcal_E}=\Hcal\otimes\Hcal^\vee$ maps $1\in\Gamma(S,\Ocal_S)$ to $\id_\Hcal\in \Gamma(S,\Hcal\otimes\Hcal^\vee)$, i.e.
\begin{equation}\label{eq_delta0}
	\delta^0(1)=\id_\Hcal
\end{equation}
In particular, we deduce that $\delta^0$ is injective. Again by \eqref{eq_delta0}, we obtain that
\begin{equation}\label{eq_cup1}
\HdR{1}{E/S}\cong \HdR{0}{E/S}\otimes\HdR{1}{E/S}\xrightarrow{\delta^0\otimes\id} \HdR{1}{\Hcal_E}\otimes \HdR{1}{E/S}\xrightarrow{\cup} \HdR{2}{\Hcal_E}
\end{equation}
coincides with the map 
\[
	\HdR{1}{E/S}\rightarrow \HOM_{\Ocal_S}(\HdR{1}{E/S},\Ocal_S)\cong\HdR{2}{\Hcal_E} , \quad x\mapsto (y\mapsto x\cup y).
\]
Since the cup product
\[
	\cup\colon \HdR{1}{E/S}\otimes \HdR{1}{E/S}\rightarrow \HdR{2}{E/S}
\]
defines a perfect pairing, we deduce that \eqref{eq_cup1} is an isomorphism. By the compatibility of the cup product with the connecting homomorphism
\[
	\begin{tikzcd}
		\HdR{0}{E/S}\otimes\HdR{1}{E/S}\ar[r,"\delta^0\otimes \id"]\ar[d,"\cup"] & \HdR{1}{\Hcal_E}\otimes \HdR{1}{E/S}\ar[d,"\cup"] \\
		\HdR{1}{E/S}\ar[r,"\delta^1"]  &  \HdR{2}{\Hcal_E}
	\end{tikzcd}
\]
we deduce that $\delta^1$ is an isomorphism. The fact that $\delta^1$ is an isomorphism implies that 
\[
	\HdR{2}{E/S,\Log{1}}\rightarrow \HdR{2}{E/S}
\]
is an isomorphism and that
\[
	\HdR{1}{E/S,\Log{1}}\rightarrow \HdR{1}{E/S}
\]
is zero. The injectivity of $\delta^0$ implies that
\[
	\HdR{0}{E/S,\Log{1}}\rightarrow \HdR{0}{E/S}
\]
is zero. This settles the case $n=1$. Let us now proceed by induction. For $n\geq 2$ let us assume that the claim has been proven for the transition map $\Log{n-1}\rightarrow\Log{n-2}$. The morphism of exact complexes associated to the map of short exact sequences
\[
	\begin{tikzcd}
	0\ar[r] &  \Hcal_E\otimes\Log{n-1}\ar[r]\ar[d] & \Log{n} \ar[r]\ar[d] & \Ocal_E  \ar[r]\ar[d] & 0 \\
	0\ar[r] &  \Hcal_E\otimes\Log{n-2}\ar[r] & \Log{n-1} \ar[r] & \Ocal_E  \ar[r] & 0 
	\end{tikzcd}
\]
splits up into pieces by the induction hypothesis. Indeed, note that $\Log{n}\rightarrow\Ocal_E$ factors through $\Log{n}\rightarrow\Log{n-1}\rightarrow... \rightarrow \Ocal_E$ and thus induces the zero map in cohomological degree $0$ and $1$ and an isomorphism in degree $2$. Using this, the above diagram of short exact sequences induces the following diagrams with exact rows:
\[
	\begin{tikzcd}
		0\ar[r] &  \HdR{0}{E/S,\Hcal_E\otimes\Log{n-1}}\ar[r,"\cong"]\ar[d,"0\text{ (by IH)}"] & \HdR{0}{E/S,\Log{n}} \ar[d,"(1)"]\\
		0\ar[r] &  \HdR{0}{E/S,\Hcal_E\otimes\Log{n-2}}\ar[r,"\cong"] & \HdR{0}{E/S,\Log{n-1}}
	\end{tikzcd}
\]
\[
	\begin{tikzcd}
		 \HdR{0}{E/S}\ar[r,hook]\ar[d,equals] & \HdR{1}{E/S,\Hcal_E\otimes\Log{n-1}} \ar[r,two heads]\ar[d,"0\text{ (by IH)}"] & \HdR{1}{E/S,\Log{n}}  \ar[d,"(2)"] \\
		 \HdR{0}{E/S}\ar[r,hook] & \HdR{1}{E/S,\Hcal_E\otimes\Log{n-2}} \ar[r,two heads] & \HdR{1}{E/S,\Log{n-1}} ,
	\end{tikzcd}
\]
and
\[
	\begin{tikzcd}
		 \HdR{1}{E/S} \ar[r,hook]\ar[d,equals] & \HdR{2}{E/S,\Hcal_E\otimes\Log{n-1}} \ar[r]\ar[d,"\cong\text{ (by IH)}"] & \HdR{2}{E/S,\Log{n}}  \ar[r,two heads]\ar[d,"(3)"] & \HdR{2}{E/S} \ar[d,equals]\\
		 \HdR{1}{E/S} \ar[r,hook] & \HdR{2}{E/S,\Hcal_E\otimes\Log{n-2}} \ar[r] & \HdR{2}{E/S,\Log{n-1}} \ar[r,two heads] & \HdR{2}{E/S}.
	\end{tikzcd}
\]
The maps denoted by (IH) are zero respectively isomorphisms by the induction hypothesis. From the  commutativity of the diagrams we deduce that the transition maps in $(1)$ and $(2)$ are zero maps while $(3)$ is an isomorphism, as desired.

\end{proof}

\subsection{The universal property of the de Rham logarithm sheaves}
In this subsection we will prove the universal property of the de Rham logarithm sheaves among all unipotent vector bundles with integrable connection. Recall that we have a descending filtration $A^\bullet\Logn$ on the $n$-th logarithm sheaves satisfying
\[
	\gr^i_A\Logn=\TSym^i\Hcal_E=\pi^* \TSym^i \Hcal.
\]
In particular, all sub-quotients are given by pullback. Motivated by this property we state the following definition:
\begin{defin}\label{def_unipotentsheaves}
	Let $S\rightarrow T$ be a smooth morphism of smooth separated $\KK$-schemes and $E/S$ an elliptic curve.
	\begin{enumerate}
		\item An object $\Ucal$ in  $\VIC{E/T}$ is called unipotent of length $n$ for $E/S/T$ if there exists a descending filtration in the category $\VIC{E/T}$
		\[
			\Ucal=A^0\Ucal \supseteq A^1\Ucal \supseteq ... \supseteq A^{n+1}\Ucal=0 
		\]
		such that for all $0\leq i\leq n$, $\gr^i_A  \Ucal=A^{i}\Ucal/A^{i+1}\Ucal=\pi^* Y_i$ for some $Y_i\in \VIC{S/T}$.
		\item Let $U^\dagger_n(E/S/T)$ be the full subcategory of objects of $\VIC{E/T}$ which are unipotent of length $n$ for $E/S/T$. For the case $S=T$ let us write $U^\dagger(E/S):=U^\dagger(E/S)$ for simplicity.
	\end{enumerate}
\end{defin}
For the moment we will consider the absolute case, i.e. $T=\Spec \KK$. Later, we will naturally by concerned with the relative case, i.e. the case $T=S$.
For $F,G\in\VIC{E/K}$ there is a natural connection on the sheaf of homomorphisms of the underlying modules. Let us write $\intHOM(F,G)$ for the internal-Hom in the category $\VIC{S/K}$. Let us also introduce the notation  $\HOM^{\mathrm{hor}}_{E/S}(F,G)$ for the sheaf of horizontal morphisms relative $S$. By the definition of the connection on the internal-Hom sheaves, we see that $\HOM^{\mathrm{hor}}_{E/S}(F,G)$ is the subsheaf of $S$-horizontal sections of $\intHOM(F,G)$. We can rephrase this as follows:
\[
	\pi_*\HOM^{\mathrm{hor}}_{E/S}(F,G)=\HdR{0}{E/S,\intHOM(F,G)}.
\]
In particular, the Gauss--Manin connection gives a $\KK$-connection on $\pi_*\HOM^{\mathrm{hor}}_{E/S}(F,G)$. With this observation we can finally characterize the logarithm sheaves through a universal property:
\begin{prop}\cite[Thm~1.3.6]{rene} \label{log_univ}
The pair $(\Logn, \idM^{(n)})$ is the unique pair, consisting of a unipotent object of $\Und{E/S/\KK}$ together with a horizontal section along $e$, such that for all $\Ucal\in \Und{E/S/\KK}$ the map
\[
	\pi_* \HOM^{\mathrm{hor}}_{E/S}(\Logn,\Ucal)\rightarrow e^*\Ucal,\quad f\mapsto (e^*f)(\idM^{(n)})
\]
is an isomorphism in $\VIC{S/\KK}$.
\end{prop}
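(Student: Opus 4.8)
The goal is to show that the natural map $\Phi_\Ucal\colon \pi_*\HOM^{\mathrm{hor}}_{E/S}(\Logn,\Ucal)\to e^*\Ucal$, $f\mapsto(e^*f)(\idM^{(n)})$, is an isomorphism in $\VIC{S/\KK}$ for every $\Ucal\in\Und{E/S/\KK}$, and then to deduce uniqueness of the pair formally. First I would check that $\Phi_\Ucal$ is a well-defined horizontal map: the source carries the Gauss--Manin connection described before the statement, the target lies in $\VIC{S/\KK}$, and horizontality of $\Phi_\Ucal$ is forced by the horizontality of $\idM^{(n)}$ together with the naturality of the construction in $\Ucal$. Since a horizontal bijection automatically has a horizontal inverse, it suffices to prove that each $\Phi_\Ucal$ is \emph{bijective}. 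Throughout I would use the identification $\HdR{i}{E/S,\intHOM(\Logn,\Ucal)}=\Ext^i_{\VIC{E/S}}(\Logn,\Ucal)$ for $i=0,1$, so that the connecting homomorphisms below admit a Yoneda description.

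For the base case $\Ucal=\pi^*Y$ of length $0$, the projection formula reduces the claim to $Y=\Ocal_S$, i.e.\ to computing $\pi_*\HOM^{\mathrm{hor}}_{E/S}(\Logn,\Ocal_E)=\HdR{0}{E/S,\Logn^\vee}$. Using relative Poincar\'e duality on $E/S$ together with the isomorphism $\HdR{2}{E/S,\Logn}\cong\Ocal_S$ from \Cref{prop_cohomLog}, this group is locally free of rank one, generated by the canonical projection $p\colon\Logn\to\Ocal_E$. Because $\idM^{(n)}=(\idM^{(1)})^{[n]}$ is sent to $1$ by the augmentation $e^*\Logn\to\Ocal_S$, one has $\Phi_{\Ocal_E}(p)=1$, so $\Phi_{\Ocal_E}$, and hence $\Phi_{\pi^*Y}$, is an isomorphism.

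For the inductive step I would take $\Ucal$ of length $\le n$ and use the short exact sequence $0\to\Ucal'\to\Ucal\to\pi^*Y_0\to 0$ with $\Ucal'=A^1\Ucal$ of length $\le n-1$ and $\pi^*Y_0=\gr^0_A\Ucal$. Applying the exact functor $\intHOM(\Logn,-)$ and the long exact sequence of $\HdR{\bullet}{E/S,-}$, and comparing with $0\to e^*\Ucal'\to e^*\Ucal\to Y_0\to 0$ via the natural transformation $\Phi$, the isomorphisms $\Phi_{\Ucal'}$ and $\Phi_{\pi^*Y_0}$ supplied by the induction hypothesis would yield $\Phi_\Ucal$ by the five lemma, \emph{provided} the connecting map $\partial\colon\HdR{0}{E/S,\intHOM(\Logn,\pi^*Y_0)}\to\HdR{1}{E/S,\intHOM(\Logn,\Ucal')}$ vanishes.

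The main obstacle is exactly this vanishing of $\partial$. By the Yoneda description, $\partial$ sends a horizontal map $\phi\colon\Logn\to\pi^*Y_0$ to the pullback $\phi^*[\Ucal]$ of the extension class of $\Ucal$. By the base case every such $\phi$ is the canonical projection $p$ twisted by $Y_0$, and $p$ factors through the transition map $t\colon\Logn\to\Log{n-1}$. A d\'evissage along the filtration of $\Ucal'$ reduces the target to the pullback case $\intHOM(\Logn,\pi^*Z)$, where $\HdR{1}{E/S,\intHOM(\Logn,\pi^*Z)}=\HdR{1}{E/S,\Logn^\vee}\otimes Z$; there relative Poincar\'e duality identifies the map induced by $t$ with the dual of the transition map $\HdR{1}{E/S,\Logn}\to\HdR{1}{E/S,\Log{n-1}}$, which is zero by \Cref{prop_cohomLog}. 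Hence $\partial=0$ and the induction closes. Finally, uniqueness follows by the usual Yoneda argument: applying the universal property of a second such pair $(\Mcal,s)$ to $\Ucal=\Logn$ and that of $(\Logn,\idM^{(n)})$ to $\Ucal=\Mcal$ produces mutually inverse horizontal maps matching the distinguished sections, whence the pair is unique up to unique isomorphism.
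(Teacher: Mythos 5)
Your base case, your Yoneda description of the connecting map, and your uniqueness argument are all fine, and your overall scheme (induction on the unipotent length, five lemma, vanishing of a connecting homomorphism) is the same as the paper's. The genuine gap is at the one step where all the content sits, and it is caused by your choice of d\'evissage. You filter $\Ucal$ with the pullback as the \emph{quotient}, $0\to\Ucal'\to\Ucal\to\pi^*Y_0\to 0$ with $\Ucal'=A^1\Ucal$, so your connecting map $\partial$ takes values in $\HdR{1}{E/S,\intHOM(\Logn,\Ucal')}$ with an arbitrary unipotent coefficient $\Ucal'$, and after factoring $\phi$ through the transition map $t\colon\Logn\to\Log{n-1}$ you need $t^*\colon\HdR{1}{E/S,\intHOM(\Log{n-1},\Ucal')}\to\HdR{1}{E/S,\intHOM(\Logn,\Ucal')}$ to vanish. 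Your d\'evissage does not prove this: writing $0\to A^1\Ucal'\to\Ucal'\to\pi^*W\to 0$ and chasing the two long exact sequences, the known vanishing of $t^*$ on the $\pi^*W$-part only shows that $t^*x$ \emph{lifts} to $\HdR{1}{E/S,\intHOM(\Logn,A^1\Ucal')}$; nothing forces the lift to be zero, and one can only conclude that the lift is killed by a \emph{further} transition map. What this argument actually yields is that a composite of $\mathrm{length}(\Ucal')+1$ successive transition maps vanishes on these $H^1$'s --- never that a single one does.

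Worse, the single-step statement you need is false, so this step cannot be repaired as written. Over $S=\Spec \KK$ (where relative and absolute connections agree), \Cref{prop_FMequiv} together with Laumon's derived equivalence identifies extensions of unipotent objects in $\VIC{E/\KK}$ with extensions of finite-length modules over $R:=\widehat{\Ocal}_{E^\dagger,e}\cong \KK[[x,y]]$, with $\Logn$ corresponding to $R/\mathfrak{m}^{n+1}$ and $t$ to the projection $R/\mathfrak{m}^{n+1}\twoheadrightarrow R/\mathfrak{m}^{n}$. For $n=2$ your claim becomes: $\Ext^1_R(R/\mathfrak{m}^2,M)\to\Ext^1_R(R/\mathfrak{m}^3,M)$ vanishes for every finite-length $M$ with $\mathfrak{m}^2M=0$. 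This fails: take $M=R/(x^2,y)$, consider $0\to x^2R/(x^4,y)\to R/(x^4,y)\to R/(x^2,y)\to 0$ (whose sub is isomorphic to $M$), and pull it back along $R/\mathfrak{m}^2\twoheadrightarrow R/(x^2,y)$ to get a class in $\Ext^1_R(R/\mathfrak{m}^2,M)$; its image in $\Ext^1_R(R/\mathfrak{m}^3,M)$ is nonzero, since a splitting would require an element $\xi$ lifting $1$ with $\mathfrak{m}^3\xi=0$, whereas any lift has component $a\equiv 1$ in $R/(x^4,y)$ and $x^3a=x^3\neq 0$. (What is true --- and is all you need, since the proposition holds --- is that $t^*$ kills the \emph{particular} classes $\psi^*[\Ucal]$ coming from extensions of $\pi^*Y_0$; but your d\'evissage gives no access to that.) The fix is the paper's opposite d\'evissage: take the pullback as the \emph{sub}-object, $0\to\pi^*Z\to\Ucal\to\Ucal/\pi^*Z\to 0$. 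Then the connecting map $\delta^0_{(n)}$ lands in $\HdR{1}{E/S,\intHOM(\Logn,\pi^*Z)}$ with \emph{pullback} coefficients, where the single-step transition map is zero by cup-product duality and \Cref{prop_cohomLog}, while the transition map on the source $\HdR{0}{E/S,\intHOM(\cdot,\Ucal/\pi^*Z)}$ is an isomorphism by the inductive universal property; the commuting square then forces $\delta^0_{(n)}=0$, and your five-lemma conclusion goes through.
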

\begin{proof}
For the convenience of the reader let us sketch the proof. We have for $0\leq i\leq 2$ a canonical horizontal isomorphism
\begin{equation}\label{eq_duality}
	\HdR{2-i}{E/S,\left(\Logn\right)^\vee}\righteq \HdR{i}{E/S,\Logn}^\vee
\end{equation}
induced by the perfect cup product pairing
\[
	\HdR{2-i}{E/S,\left(\Logn\right)^\vee}\otimes_{\Ocal_S}\HdR{i}{E/S,\Logn}\rightarrow \HdR{2}{E/S}\righteq  \Ocal_S.
\]
We prove the result by a double induction over $0\leq k\leq n$, where $k$ is the length of the shortest unipotent filtration of the object $\Ucal$. For $k=n=0$, we have $\Ucal=\pi^*Z$ for some object $Z$ of $\VIC{S/\KK}$. Indeed, we have
\[
	\pi_* \HOM^{\mathrm{hor}}_{E/S}(\Log{0},\pi^*Z)\cong \pi_* \HOM^{\mathrm{hor}}_{E/S}(\Ocal_E,\pi^*Z)\cong \HOM(\Ocal_S,Z)\cong Z
\]
and the map is given by $f\mapsto e^*f(1)=e^*f(\idM^{(0)})$. For the case $0=k<n$ observe, that the transition maps
\[
	\HdR{0}{E/S,\left(\Logn\right)^\vee}\righteq \HdR{0}{E/S,\left(\Log{n-1}\right)^\vee}
\]
are isomorphisms by the above perfect pairing and \Cref{prop_cohomLog}. We deduce the claim for $0=k<n$ by the commutative diagram
\[
\begin{tikzcd}
	\HdR{0}{E/S,\left(\Logn\right)^\vee} \ar[r,"\simeq"]\ar[d,"\simeq"] & \pi_* \HOM^{\mathrm{hor}}_{E/S}(\Log{n},\pi^*Z)\ar[r]\ar[d] & Z\ar[d,equal]\\
	\HdR{0}{E/S,\left(\Log{0}\right)^\vee} \ar[r,"\simeq"] & \pi_* \HOM^{\mathrm{hor}}_{E/S}(\Log{0},\pi^*Z)\ar[r,"\simeq"] & Z.
\end{tikzcd}
\]
Here, let us observe, that $\idM^{(n)}$ maps to $\idM^{(0)}$ under the transition map. Let us now consider the case $0<k\leq n$ and assume that the case $k-1\leq n$ has already been settled. For an unipotent object $\Ucal$ of length $k$ we have an horizontal exact sequence
\[
	\begin{tikzcd}
		0\ar[r] & \pi^* Z \ar[r] & \Ucal \ar[r] & \Ucal/\pi^*Z \ar[r]  & 0
	\end{tikzcd}
\]
with $\Ucal/\pi^*Z$ in $U_{k-1}^\dagger(E/S/\KK)$. This sequence induces a long exact sequences in $\VIC{S/\KK}$. Let us first show that the connecting homomorphism
\[
  \delta^{0}_{(n)}\colon \HdR{0}{E/S,\intHOM(\Logn,\Ucal/\pi^*Z)}\rightarrow \HdR{0}{E/S,\intHOM(\Logn,\pi^*Z)}
\]
is trivial for all $n\geq k$. Let us consider the following commutative diagram, where the right vertical map is induced by the transition maps of the logarithm sheaves:
\[
	\begin{tikzcd}
		\HdR{0}{E/S,\intHOM(\mathrm{Log}_{dR}^{n-1},\Ucal/\pi^*Z)}\ar[d]\ar[r] &e^*(\Ucal/\pi^*Z)\ar[d,equals] \\
		\HdR{0}{E/S,\intHOM(\Logn,\Ucal/\pi^*Z)}\ar[r] & e^*(\Ucal/\pi^*Z),
	\end{tikzcd}
\]
By the induction hypothesis, the horizontal maps in this diagram are isomorphisms. We deduce that the map
\[
	\HdR{0}{E/S,\intHOM(\Log{n-1},\Ucal/\pi^*Z)}\rightarrow \HdR{0}{E/S,\intHOM(\Log{n},\Ucal/\pi^*Z)}
\]
is an isomorphism, too. On the other hand, the transition maps
\begin{equation}\label{eq_transition_zero}
	\HdR{1}{E/S,\intHOM(\Log{n-1},\pi^*Z)}\rightarrow \HdR{1}{E/S,\intHOM(\Log{n},\pi^*Z)}
\end{equation}
are identified with the dual of the transition maps
\[
	\HdR{1}{E/S,\Log{n-1}}^\vee \otimes Z \rightarrow \HdR{1}{E/S,\Log{n}}^\vee \otimes Z.
\]
under the perfect cup product pairing. Thus, \cref{prop_transition} implies that \eqref{eq_transition_zero} is the zero morphism. The connecting homomorphisms $\delta_{(n)}^0$ and $\delta_{(n-1)}^0$ fit in the following commutative diagram, where the vertical maps are induced by the transition maps of the logarithm sheaves:
\[
	\begin{tikzcd}
		\HdR{0}{E/S,\intHOM(\Log{n-1},\Ucal/\pi^*Z)} \ar[r,"\delta_{(n-1)}^0"]\ar[d,"\cong"] &  \HdR{1}{E/S,\intHOM(\Log{n-1},\pi^*Z)}\ar[d,"0"]\\
		\HdR{0}{E/S,\intHOM(\Logn,\Ucal/\pi^*Z)} \ar[r,"\delta_{(n)}^0"] &  \HdR{1}{E/S,\intHOM(\Logn,\pi^*Z)}.
	\end{tikzcd}
\]
Above, we have already shown that the left vertical map is an isomorphism, while the right vertical map is zero. This shows the vanishing of the connecting homomorphism $\delta_{(n)}^0$. Now, the claim for $0<k\leq n$ is easily deduced from the induction hypothesis. Indeed, we get the following commutative diagram with vertical exact sequences:
\[
\begin{tikzcd}
	0 \ar[d] & 0\ar[d]  \\
	\HdR{0}{E/S,\intHOM(\Logn,\pi^*Z)} \ar[d] \ar[r,"\simeq"] & Z \ar[d] \\
	 \HdR{0}{E/S,\intHOM(\Logn,\Ucal)} \ar[d] \ar[r] & e^*\Ucal\ar[d]  \\
	  \HdR{0}{E/S,\intHOM(\Logn,\Ucal/\pi^*Z)} \ar[d] \ar[r,,"\simeq"] & e^*(\Ucal/\pi^*Z)\ar[d]  \\
	  0 & 0
\end{tikzcd}
\]
Here, the first and the last horizontal maps are isomorphisms by induction. The left-exactness of the first column follows from the vanishing of the connecting homomorphism $\delta_{(n)}^0$. We deduce the desired isomorphism in the middle.
\end{proof}

\begin{rem}
	Another way to formulate the universal property is as follows. Consider the category consisting of pairs $(\Ucal,s)$ with $\Ucal\in \Und{E/S/\KK}$ and a fixed horizontal section $s\in\Gamma(S,e^*\Ucal)$. Morphisms are supposed to be horizontal and respect the fixed section after pullback along $e$. Then, the universal property reformulates as the fact that this category has an initial object. This initial object is $(\Logn, \idM^{(n)})$.
\end{rem}

\section{The Definition of the polylogarithm in de Rham cohomology}\label{ch:DR_secdefpol}
Let us briefly recall the definition of the de Rham cohomology class of the polylogarithm following \cite[Chapter 1.5]{rene}. Let us fix a positive integer $D$. Let us define the sections $1_{e},1_{E[D]}\in\Gamma(E[D],\Ocal_{E[D]})$ as follows: Let $1_{E[D]}$ correspond to $1\in\Ocal_{E[D]}$ and $1_{e}$ correspond to the section which is zero on $E[D]\setminus\{e\}$ and $1$ on $\{e\}$. The localization sequence in de Rham cohomology for the situation
\begin{equation}\label{DR_Pol_eq1}
	\begin{tikzcd}
		U_D:=E\setminus E[D] \ar[r,hook,"j_D"]\ar[rd,swap,"\pi_{U_D}"] & E\ar[d,"\pi"] & E[D]\ar[ld,"\pi_{E[D]}"] \ar[l,swap,"i_D"]\\
		& S &
	\end{tikzcd}
\end{equation}
combined with the vanishing results (c.f.~\Cref{prop_transition}) gives the following.
\begin{lem}[{\cite[\S 1.5.2, Lemma 1.5.4]{rene}}]
	Let us write $\Hcal_{E[D]}:=\pi_{E[D]}^*\Hcal$ and $\Hcal_{U_D}:=\pi_{U_D}^*\Hcal$. The localization sequence in de Rham cohomology for \eqref{DR_Pol_eq1} induces an exact sequence:
	\[
		\begin{tikzcd}[column sep=small]
			0\ar[r] & \varprojlim_n \HdRabs{1}{U_D/\KK, \Logn}\ar[r,hook,"\Res"] & \prod_{k=0}^\infty \HdRabs{0}{E[D]/\KK,\Sym^k\Hcal_{E[D]}}\ar[r,"\sigma"] & K\ar[r] & 0
		\end{tikzcd}
	\]
	If we view the horizontal section $D^2 \cdot 1_{e}-1_{E[D]}\in \Gamma(S,\Ocal_{E[D]})$ as sitting in degree zero of
	\[
	\prod_{k=0}^{\infty} \HdR{0}{E[D]/\KK,\Sym^k \Hcal_{E[D]}},
	\]
	it is contained in the kernel of the augmentation map $\sigma$.
\end{lem}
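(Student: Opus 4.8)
The plan is to deduce the exact sequence by applying the localization (Gysin) sequence of de Rham cohomology to \eqref{DR_Pol_eq1} for each fixed $n$ and then passing to $\varprojlim_n$, using \Cref{prop_cohomLog} to control all transition maps. For each $n\ge 0$, the localization sequence for the relative divisor $i_D\colon E[D]\hookrightarrow E$ with open complement $j_D\colon U_D\hookrightarrow E$ and coefficients in $\Logn$ has the relevant segment
\[
\HdRabs{1}{U_D/\KK,\Logn}\xrightarrow{\Res_n}\HdRabs{0}{E[D]/\KK,i_D^*\Logn}\xrightarrow{g_n}\HdRabs{2}{E/\KK,\Logn},
\]
preceded by $j_D^*\colon\HdRabs{1}{E/\KK,\Logn}\to\HdRabs{1}{U_D/\KK,\Logn}$ and followed by $j_D^*\colon\HdRabs{2}{E/\KK,\Logn}\to\HdRabs{2}{U_D/\KK,\Logn}$; here $\Res_n$ is the Poincar\'e residue and $g_n$ the Gysin map (the codimension-one Tate twist is suppressed, as over a field it only affects the connection datum). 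The middle term is identified using the splitting principle for the logarithm sheaf along torsion sections (cf.~\cite{rene,beilinson_levin}): $i_D^*\Logn$ splits canonically as $\bigoplus_{k=0}^n\Sym^k\Hcal_{E[D]}$, compatibly with the transition maps $\Logn\twoheadrightarrow\Log{n-1}$, which become the projections forgetting the top graded piece. Hence
\[
\varprojlim_n\HdRabs{0}{E[D]/\KK,i_D^*\Logn}\cong\prod_{k=0}^\infty\HdRabs{0}{E[D]/\KK,\Sym^k\Hcal_{E[D]}}.
\]

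Next I would pass to $\varprojlim_n$. By \Cref{prop_cohomLog} the transition maps on $\HdRabs{1}{E/\KK,\Logn}$ vanish (the relative statement propagates to the absolute cohomology through the Leray spectral sequence, the transition map acting as zero on all contributing $E_2$-terms). A system with vanishing transition maps has trivial limit and trivial $\varprojlim^1$, so exactness is preserved under $\varprojlim_n$. Injectivity of $\Res:=\varprojlim_n\Res_n$ is then immediate: if $(\alpha_n)_n$ satisfies $\Res_n\alpha_n=0$ for all $n$, each $\alpha_n$ is of the form $j_D^*\beta_n$, and applying the transition map to level $n-1$ yields $\alpha_{n-1}=0$ because the transition on $\HdRabs{1}{E/\KK,-}$ is zero; hence $(\alpha_n)_n=0$. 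Exactness of each segment gives $\im\Res_n=\ker g_n$, and in the limit this identifies the image of $\Res$ with $\ker\sigma$, where $\sigma:=\varprojlim_n g_n$.

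It remains to identify the target of $\sigma$ with $\KK$ and to describe $\sigma$ explicitly. Since $U_D/S$ is affine of relative dimension one, $\HdR{2}{U_D/S,\Logn}=0$, so the relative Gysin map surjects onto $\HdR{2}{E/S,\Logn}\cong\Ocal_S$ (\Cref{prop_cohomLog}); together with $\HdRabs{0}{S/\KK,\Ocal_S}=\KK$ this produces the surjection $\sigma$ onto $\KK$. The crucial point is that $g_n$ is compatible with the transition maps, and the transition maps on $\HdR{2}{E/S,\Logn}$ are isomorphisms factoring through $\Log{0}=\Ocal_E$; therefore $g_n$ factors through the projection onto the $k=0$ summand $\HdRabs{0}{E[D]/\KK,\Ocal_{E[D]}}$, on which it becomes the degree (trace) map $\Tr_{E[D]/S}$ of the finite flat morphism $\pi_{E[D]}\colon E[D]\to S$ of degree $D^2$. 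In other words, $\sigma$ sends a tuple $(f_k)_{k\ge 0}$ to $\Tr_{E[D]/S}(f_0)\in\KK$.

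Granting this description, the final assertion is a degree count. The horizontal section $D^2\cdot 1_e-1_{E[D]}$ lies entirely in the $k=0$ component $\HdRabs{0}{E[D]/\KK,\Ocal_{E[D]}}$, so
\[
\sigma\!\left(D^2\cdot 1_e-1_{E[D]}\right)=D^2\cdot\Tr_{E[D]/S}(1_e)-\Tr_{E[D]/S}(1_{E[D]}).
\]
The idempotent $1_e$ of the unit section has trace $1$, whereas the unit function $1_{E[D]}$ has trace equal to the degree $D^2$ of $\pi_{E[D]}$; thus both terms equal $D^2$ and their difference vanishes, so $D^2\cdot 1_e-1_{E[D]}\in\ker\sigma$. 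I expect the genuine obstacle to be making the description of $\sigma$ precise: one must verify, keeping careful track of the splitting principle and the codimension-one Tate twist, that the Gysin map really does reduce on the degree-zero piece to the trace functional $\Tr_{E[D]/S}$, with the correct normalization. Once $\sigma$ is pinned down, the residue sequence, the limit arguments, and the final count are all formal.
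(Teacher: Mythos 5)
Your construction of the exact sequence follows the same route as the paper: the localization sequence for \eqref{DR_Pol_eq1} with coefficients in $\Logn$, the pro-vanishing of $\HdRabs{1}{E/\KK,\Logn}$ deduced from \Cref{prop_cohomLog}, the splitting $i_D^*\Logn\cong\bigoplus_{k=0}^n\Sym^k\Hcal_{E[D]}$ coming from invariance under isogenies along $E[D]\subset E$, and passage to the limit. Where you go beyond the paper's (very terse) proof, which delegates the remaining points to Scheider's thesis, is in identifying $\sigma$ explicitly: you show it factors through the $k=0$ component, where it is the trace $\Tr_{E[D]/S}$ of the degree-$D^2$ finite morphism $\pi_{E[D]}$, and then the kernel statement is the computation $\sigma(D^2\cdot 1_e-1_{E[D]})=D^2\cdot 1-D^2=0$. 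This is correct and is genuinely useful content that the paper omits.

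Two technical corrections are needed. First, the transition maps on the absolute groups $\HdRabs{1}{E/\KK,\Logn}$ do \emph{not} literally vanish: \Cref{prop_cohomLog} is a statement about the relative groups, and via Leray it kills the transition maps only on the graded pieces of the two-step filtration on $\HdRabs{1}{E/\KK,\Logn}$; hence it is the composite of two consecutive transition maps that is zero. The pro-system is therefore essentially zero, which is all you need: $\varprojlim$ and $\varprojlim^1$ still vanish, and your injectivity argument goes through after descending two levels instead of one. Second, your stated argument for surjectivity of $\sigma$ has a gap: surjectivity of the relative Gysin map onto $\HdR{2}{E/S,\Logn}\cong\Ocal_S$ does not imply surjectivity after applying $\HdRabs{0}{S/\KK,-}$, since that functor is only left exact. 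But your own trace description repairs this immediately: $\sigma(1_{E[D]})=D^2\neq 0$ and the target $\KK$ is one-dimensional, so $\sigma$ is surjective. With these two adjustments the proof is complete.
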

\begin{proof}
	For the convenience of the reader let us recall the construction of the short exact sequence. The localization sequence and the vanishing of $\varprojlim_n \HdRabs{1}{E,\Logn}=0$ gives
	\[
	\begin{tikzcd}[column sep=small]
		0\ar[r] & \varprojlim\limits_n \HdRabs{1}{U_D,\Logn} \ar[r,hook,"\Res"] & \varprojlim\limits_n \HdRabs{0}{E[D],i_D^*\Logn} \ar[r] & \varprojlim\limits_n \HdRabs{2}{E,\Logn} \ar[r] & 0.
	\end{tikzcd}
	\]
	Now, the exact sequence in the claim follows by \Cref{prop_transition} and the isomorphism
	\[
		i_D^*\Logn\righteq i_D^*[D]^*\Logn=\pi_{E[D]}^*e^*\Logn\righteq \bigoplus_{k=0}^n \Sym^k \Hcal_{E[D]}.
	\]
\end{proof}
\begin{defin}
	Let $\polD=(\polD^n)_{n\geq0}\in\varprojlim_n \HdRabs{1}{U_D/\KK,\Logn}$ be the unique pro-system mapping to $D^21_{e}-1_{E[D]}$ under the residue map. We call $\polD$ \emph{the ($D$-variant) of the elliptic polylogarithm}.
\end{defin}

\begin{rem}
	Let us write $U:=E\setminus\{e\}$. The \emph{classical polylogarithm}  in de Rham cohomology 
	\[
		(\mathrm{pol}_{\mathrm{dR}}^n)_{n\geq 0}\in \varprojlim_n \HdRabs{1}{U/K, \Hcal_E^\vee\otimes_{\Ocal_E}\Logn }
	\]	
	 is defined as the unique element mapping to $\id_{\Hcal}$  under the isomorphism
	\[
		\varprojlim_n \HdRabs{1}{U/K, \Hcal_E^\vee\otimes_{\Ocal_E}\Logn } \righteq \prod_{k=1}^\infty \HdRabs{0}{S/\QQ, \Hcal^\vee\otimes_{\Ocal_S}\Sym^k \Hcal}.
	\]
	This isomorphism comes from the localization sequence for $U:=E\setminus\{e\}\hookrightarrow E$. For details we refer to \cite[\S 1.5.1]{rene}. Indeed, there is not much difference between the classical polylogarithm and its $D$-variant. For a comparison of both we refer to \cite[\S 1.5.3]{rene}.
\end{rem}

\section{The de Rham logarithm sheaves via the Poincar\'e bundle}
	In his PhD thesis Scheider gave an explicit model for the de Rham logarithm sheaves constructed out of the Poincar\'e bundle \cite[Theorem 2.3.1]{rene}. Since the material has never been published, we will recall his approach to the de Rham logarithm sheaves via the Poincar\'e bundle. The main result of this section is due to Scheider, but we provide a considerably shorter proof of this theorem.
	\subsection{The geometric logarithm sheaves}	
	Let $\pi:E\rightarrow S$ be an elliptic curve over a separated locally Noetherian base scheme $S$. Let us recall the definition of the Poincar\'e bundle and thereby fix some notation. A \emph{rigidification} of a line bundle $\Lcal$ on $E$ over $S$ is an isomorphism
\[
	r:e^*\Lcal \righteq \Ocal_S.
\]
A morphism of rigidified line bundles is a morphism of line bundles respecting the rigidification. The dual elliptic curve $E^\vee$ represents the functor
\[
	T\mapsto \mathrm{Pic}^0(E_T/T):=\{ \text{iso. classes of rigidified line bundles } (\Lcal,r) \text{ of degree $0$ on } E_T/T\}
\]
on the category of $S$-schemes. Because a rigidified line bundle does not have any non-trivial automorphisms, there is a universal rigidified line bundle $(\Po,r_0)$ called the \emph{Poincar\'e bundle} over $E\times_S \Ed$. By interchanging the roles of $E$ and $\Ed$, we get a unique trivialization $s_0\colon (e\times\id)^*\Po\righteq \Ocal_{\Ed}$ and we call $(\Po,r_0,s_0)$ the \emph{bi-rigidified Poincar\'e bundle}. Similarly, let us consider the group valued functor
\[
	T\mapsto \mathrm{Pic}^\dagger(E_T/T):=
	\left.\begin{cases}
	\text{iso. classes of rigidified line bundles } (\Lcal,r,\nabla) \text{ of degree $0$ on } E_T/T \\
	\text{with an integrable $T$-connection $\nabla\colon \Lcal\rightarrow \Lcal\otimes_{\Ocal_{E_T}}\Omega^1_{E_T/T}$ on $\Lcal$}
	\end{cases}	\right\}.
\]
This functor is representable by an $S$-group scheme $E^\dagger$. By forgetting the connection, we obtain an epimorphism $q^\dagger\colon E^\dagger\rightarrow E^\vee$ of group schemes over $S$. The pullback $\Po^\dagger:=(q^\dagger)^*\Po$ is equipped with a unique integrable $E^\dagger$-connection
	\[
		\con{\Po^\dagger}: \Po^\dagger \rightarrow \Po^\dagger\otimes\Omega^1_{E\times_S E^\dagger/E^\dagger}
	\]
	making $(\Po^\dagger,\con{\Po^\dagger},r_0)$ universal among all rigidified line bundles with integrable connection. Let us mention that the group scheme $E^\dagger$ satisfies another universal property: It sits in a short exact sequence
	\[
		0\rightarrow V(\omega_{E/S})\rightarrow E^\dagger \rightarrow E^\vee \rightarrow 0
	\]
	with $V(\omega_{E/S})$ the vector group over $S$ associated with $\omega_{E/S}$ and every other such vectorial extension of $E^\vee$ is a pushout of this extension. This explains why $E^\dagger$ is called \emph{universal vectorial extension} of $E^\vee$. For a more detailed discussion on the universal vectorial extension and its properties, let us refer to the first chapter of the book of Mazur--Messing \cite{mazur_messing}.\par 
	 Let us denote the inclusions of the infinitesimal thickenings of $e$ in $E^\dagger$ resp. $\Ed$ by:
		\begin{align*}
			\iota_n^\dagger :E_n^\dagger:=\Inf^n_{e} E^\dagger\hookrightarrow E^\dagger, \\
			\iota_n :E_n^\vee:=\Inf^n_{e} \Ed\hookrightarrow \Ed.
		\end{align*}
		
	\begin{defin}
 		For $n\geq0$ define
		\begin{align*}
			\Ln^\dagger:&=(\pr_E)_* (\id_E\times \iota_n^\dagger)^*\Po^\dagger,\quad \Ln:=(\pr_E)_* (\id_E\times \iota_n)^*\Po.
		\end{align*}
		Both $\Ln$ and $\Ln^\dagger$ are locally free $\Ocal_E$-modules of finite rank equipped with canonical isomorphisms
		\begin{align*}
			\triv_e:e^*\Ln^\dagger\righteq \Ocal_{E_n^\dagger},\quad \triv_e:e^*\Ln\righteq \Ocal_{E_n^\vee}
		\end{align*}
		induced by the rigidifications of the Poincar\'{e} bundle. Furthermore, $\con{\Po^\dagger}$ induces an integrable $S$-connection $\con{\Ln^\dagger}$ on $\Ln^\dagger$. We call $\Ln^\dagger$ the \emph{$n$-th geometric logarithm sheaf}. Sometimes we will write $\Lcal^\dagger_{n,E}$ to emphasize the dependence on the elliptic curve $E/S$.
	\end{defin}
	Let us discuss some immediate properties of the geometric logarithm sheaves. The reader who is familiar with the formal properties of the abstract logarithm sheaves will immediately recognize many of the following properties: The compatibility of the Poincar\'e bundle with base change along $f:T\rightarrow S$ shows immediately that the geometric logarithm sheaves are \emph{compatible with base change}, i.\,e.
	\[
		\pr_E^*\Lcal_{n,E/S}\righteq \Lcal_{n,E_T/T}, \quad \pr_E^*\Lcal^{\dagger}_{n,E/S}\righteq \Lcal_{n,E_T/T}^{\dagger}
	\]
	where $\pr_E:E_T=E\times_S T\rightarrow E$ is the projection. By restricting form the $n$-th infinitesimal thickening to the $(n-1)$-th, we obtain \emph{transition maps}
	\[
		\Lcal_n^{\dagger}\twoheadrightarrow \Lcal_{n-1}^{\dagger},\quad \Lcal_n\twoheadrightarrow \Lcal_{n-1}.
	\]
	The decompositions $\Ocal_{\Inf^1_e E^\vee}=\Ocal_S \oplus \om_{\Ed/S}$ and $\Ocal_{\Inf^1_e E^\dagger}=\Ocal_S \oplus \Hcal $ show that the transition maps $\Lcal_1\rightarrow\Ocal_E$ and $\Lcal^\dagger_1\rightarrow \Ocal_E$ sit in \emph{short exact exact sequences}
	\[
		\begin{tikzcd}
			0 \ar[r] & \pi^*\om_{E^\vee/S} \ar[r] & \Lcal_1 \ar[r] & \Ocal_E\ar[r] & 0
		\end{tikzcd}
	\]
	and
	\[
		\begin{tikzcd}
			0 \ar[r] & \Hcal_E \ar[r] & \Lcal_1^\dagger \ar[r] & \Ocal_E\ar[r] & 0.
		\end{tikzcd}
	\]
	Since $\Po^\dagger$ is the pullback of $\Po$, we obtain natural inclusions
	\[
		\Lcal_n\hookrightarrow \Lcal^\dagger_n.
	\]
	These inclusions can be interpreted as the first non-trivial step of the \emph{Hodge filtration} of the geometric logarithm sheaf $\Lcal_n^\dagger$: The Hodge filtration on $\Hcal$ induces a descending filtration of $\Ocal_E$-modules on $\Lcal^\dagger_1$ such that all morphisms in
\[
	0\rightarrow\Hcal_E\rightarrow \Lcal^\dagger_1 \rightarrow \Ocal_E \rightarrow 0
\]
are strictly compatible with the filtration. Here, $\Ocal_E$ is considered to be concentrated in filtration step $0$. Explicitly this filtration is given as
\[
	F^{-1}\Lcal_1^\dagger=\Lcal^\dagger_1 \supseteq F^{0}\Lcal_1^\dagger= \Lcal_1\supseteq F^{1}\Lcal_1^\dagger=0.
\]
Let us write $[D]\colon E\rightarrow E$ for the isogeny given by $D$-multiplication. The dual isogeny of $[D]$ is $D$-multiplication on $E^\vee$. By the universal property of the Poincar\'e bundle over $E\times_S E^\vee$, there is a unique isomorphism
	\[
		\gamma_{\id,[D]}\colon (\id\times [D])^*\Po\righteq ([D]\times \id)^*\Po.
	\]
	Since we are working over a field of characteristic zero, the $D$-multiplication induces an isomorphism on $\Inf^n_e E^\vee$:
	\[
		\begin{tikzcd}
			\Inf^n_e E^\vee\ar[r,hook]\ar[d,"\cong"] & E^\vee\ar[d,"{[D]}"]\\
			\Inf^n_e E^\vee\ar[r,hook] & E^\vee.
		\end{tikzcd}
	\]
	Restricting $\gamma_{\id,[D]}$ along $E\times_S \Ed$ and using the above commutative diagram gives
	\[
		(\pr_E)_*(\Po^\dagger|_{E\times \Inf^n_e\Ed})\cong (\pr_E)_*\left((\id\times [D])^*\Po^\dagger|_{E\times \Inf^n_e\Ed}\right)\righteq (\pr_E)_*\left(([D]\times \id)^*\Po^\dagger|_{E\times \Inf^n_e\Ed}\right).
	\]
	Recalling $\Ln^\dagger=(\pr_E)_*(\Po^\dagger|_{E\times \Inf^n_e\Ed})$ gives an \emph{invariance under isogenies} isomorphism:
	\begin{equation}\label{eq_invariance}
		\Ln^\dagger\righteq [D]^*\Ln^\dagger.
	\end{equation}

\subsection{The Fourier--Mukai transform of Laumon}
For a smooth morphism $X\rightarrow S $ let us denote by $\Dcal_{X/S}$ the sheaf of differential operators of $X/S$. Furthermore, let us denote by $D^b_{qc}(\Dcal_{X/S})$ (resp. $D^b_{qc}(\Ocal_X)$) the derived category of bounded complexes of quasi-coherent $\Dcal_{X/S}$-modules (resp. $\Ocal_X$-modules). As usually, let $E/S$ be an elliptic curve over a smooth base $S$ over a field of characteristic zero. The Poincar\'e bundle with connection $(\Po^\dagger,\con{\Po^\dagger})$ on $E\times_S E^\dagger$ serves as kernel for Laumon's Fourier--Mukai equivalence:
\begin{thm}[{\cite[(3.2)]{laumon}}]
The functor
\[
	\Phi_{\Po^\dagger}\colon D^b_{qc}(\Ocal_{E^\dagger})\rightarrow D^b_{qc}(\Dcal_{E/S}),\quad \Fcal^\bullet\mapsto R\pr_{E,*}\left((\Po^\dagger,\con{\Po^\dagger})\otimes_{\Ocal_{E\times E^\dagger}}\pr_{E^\dagger}^*\Fcal^\bullet\right)
\]
establishes an equivalence of triangulated categories.
\end{thm}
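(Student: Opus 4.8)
The strategy I would follow is Mukai's original argument for the Fourier transform on abelian varieties, adapted to the setting of line bundles with connection. The guiding principle is that $E^\dagger$ is precisely the moduli space of degree-zero line bundles with integrable connection on $E/S$, with $(\Po^\dagger,\con{\Po^\dagger})$ the universal family; restricting the kernel along a point $x\in E^\dagger$ recovers the corresponding rank-one $\Dcal_{E/S}$-module, so that $\Phi$ should send the skyscraper at $x$ to this rank-one connection. The plan is to produce an explicit quasi-inverse as a second integral transform and to verify that both compositions are isomorphic to a shift of the identity. Since $E^\dagger$ is compatible with base change in $S$ and the formation of $R\pr_{E,*}$ commutes with base change, I would first reduce the question of whether the unit and counit are isomorphisms to the geometric fibres, i.e.\ to the case $S=\Spec K$ with $K$ algebraically closed.

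For the quasi-inverse $\Psi$ I would use the kernel obtained from $(\Po^\dagger,\con{\Po^\dagger})$ by pulling back along the inverse (antipode) morphism of $E^\dagger$, reflecting the passage from $(\Lcal,\nabla)$ to its dual connection, together with the $\Dcal$-module direct image along $\pr_{E^\dagger}$. The crucial structural input is that $E^\dagger\to\Ed$ is an affine torsor under the vector group $V(\omega_{E/S})$: the $\Dcal$-module pushforward along this map is computed by a relative de Rham complex, and this is exactly the mechanism that converts an $\Ocal_{E^\dagger}$-module into a $\Dcal_{E/S}$-module together with its connection. I would arrange $\Psi$ so that $\con{\Po^\dagger}$ is precisely the datum matching the integrability of the resulting $\Dcal_{E/S}$-action.

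The heart of the proof is the computation of the composition $\Psi\circ\Phi$. As for any pair of integral transforms, this composition is again an integral transform whose kernel is the convolution of the two kernels over the relevant fibre product, and the claim reduces to showing that this convolution is the structure sheaf of the diagonal in $E^\dagger\times_S E^\dagger$, placed in a single cohomological degree equal to the relative dimension. By the see-saw principle and the theorem of the cube I would identify the convolution kernel, and the key vanishing is that a \emph{nontrivial} rank-one line bundle with connection on an elliptic curve is acyclic for de Rham cohomology (its Euler characteristic is zero and $H^0$, hence $H^2$, vanish), while the trivial connection contributes $K$ in the middle degree. This acyclicity forces the convolution to be supported on the diagonal and concentrated in the expected degree, which yields $\Psi\circ\Phi\cong\id$ up to shift; the reverse composition is handled symmetrically.

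The main obstacle I anticipate is the bookkeeping forced by the \emph{non-properness} of $E^\dagger$ over $S$ and the passage between $\Ocal$-module and $\Dcal$-module direct images. In the classical compact case the required vanishing and the collapse to a skyscraper follow cleanly from cohomology and base change together with Serre duality on the dual abelian variety; here $E^\dagger$ is only a vector-group torsor over $\Ed$, so the naive coherent pushforward must be replaced by the de Rham pushforward along the vectorial fibres, and one must verify that base change and the projection formula persist in this $\Dcal$-module setting and that the convolution of the two connections produces exactly the trivial connection on the diagonal. I expect this compatibility of connections under convolution, rather than the underlying $\Ocal$-module computation, to be the delicate part.
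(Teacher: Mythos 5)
First, a point of order: the paper does \emph{not} prove this statement. It is Laumon's theorem, imported as a black box with the citation \cite{laumon}; the text only proves consequences of it (\Cref{lem_FMScheider}, \Cref{prop_FMequiv}, \Cref{DR_propFM}), following Scheider. So there is no in-paper proof to compare your attempt against, and I will instead assess your sketch as a proof of Laumon's theorem itself. Its overall architecture --- an explicit quasi-inverse integral transform, reduction of both composites to a convolution kernel, the key vanishing $\HdRabs{*}{E_{\bar{s}},(\Lcal,\nabla)}=0$ for a nontrivial rank-one connection, and reduction to geometric fibres by base change --- does match the strategy of the actual proofs in the literature (Laumon, and independently Rothstein), so the road map is sound.

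Two concrete points would, however, have to be repaired before this becomes a proof. (i) The mechanism you give for converting $\Ocal_{E^\dagger}$-modules into $\Dcal_{E/S}$-modules is misattributed: there is no ``$\Dcal$-module pushforward along $E^\dagger\to\Ed$'' in play. The $\Dcal_{E/S}$-structure on $\Phi_{\Po^\dagger}(\Fcal^\bullet)$ comes from the relative connection $\con{\Po^\dagger}$, which differentiates in the $E$-direction and makes $\Po^\dagger\otimes\pr_{E^\dagger}^*\Fcal^\bullet$ a module over $\Dcal_{E\times_S E^\dagger/E^\dagger}$ (the pullback of $\Dcal_{E/S}$), so that $R\pr_{E,*}$ inherits a $\Dcal_{E/S}$-action; dually, the quasi-inverse integrates over $E$, i.e.\ takes $R\pr_{E^\dagger,*}$ of the de Rham complex $\Omega^\bullet_{E\times_S E^\dagger/E^\dagger}(\cdot)$, not a de Rham complex along the vector-group fibres. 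The affine morphism $\pi\colon E^\dagger\to\Ed$ does enter, but differently: by the projection formula and flat base change one has $\Phi_{\Po^\dagger}(\Fcal^\bullet)\cong\Phi_{\Po}(R\pi_*\Fcal^\bullet)$ as complexes of $\Ocal_E$-modules, which is how the $\Ocal$-module content of the theorem is related to Mukai's classical equivalence. (ii) Your claim that ``the trivial connection contributes $K$ in the middle degree'' is false: the trivial rank-one connection contributes the full de Rham cohomology $K\oplus K^2[-1]\oplus K[-2]$ of the elliptic curve (so, incidentally, the relevant shift is $2$, not the relative dimension $1$). This is not a cosmetic slip, because it hides the real gap: fibrewise acyclicity off the diagonal, together with the fibres on the diagonal, shows only that the convolution kernel is \emph{supported} on $\Delta$ with the correct derived fibres there; a complex is not determined by its derived fibres, so this does not yet ``force'' the kernel to be (a shift of) $\Ocal_\Delta$ rather than, say, a line-bundle twist of it. What is needed is the $\Dcal$-module analogue of Mukai's key computation --- that $R\pr_{E^\dagger,*}\bigl(\Omega^\bullet_{E\times_S E^\dagger/E^\dagger}(\Po^\dagger)\bigr)$ is the skyscraper $e_{E^\dagger,*}\Ocal_S$ placed in degree $2$, proved by cohomology-and-base-change and a Koszul-type analysis near the unit section --- followed by a see-saw/translation argument. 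That computation is the technical heart of Laumon's theorem, and it is exactly the step your sketch passes over; by contrast, the compatibility of connections under convolution, which you single out as the delicate point, is comparatively formal.
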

We will restrict this derived Fourier--Mukai transform to certain complexes of unipotent objects which are concentrated in a single cohomological degree. Let us first introduce the following category:
\begin{defin}
	Let $\Jcal$ be the ideal sheaf of $\Ocal_{E^\dagger}$ defined by the unit section. Let $\Uninf$ be the full subcategory of the category of quasi-coherent $\Ocal_{E^\dagger}$-modules $\Fcal$, s.t. $\Jcal^{n+1}\Fcal=0$ and  $\Jcal^{i}\Fcal/\Jcal^{i+1}\Fcal$ is a locally free $\Ocal_S=\Ocal_{E^\dagger}/\Jcal$-module of finite rank for $i=0,...,n$.
\end{defin}
The following results appear in the work of Scheider:
\begin{lem}[{\cite[Proposition 2.2.6, Theorem 2.2.12 (i)]{rene}}]\label{lem_FMScheider}
Let us write $e_{E^\dagger}\colon S\rightarrow E^\dagger$ for the unit section of the universal vectorial extension of $E^\vee$.
\begin{enumerate}
\item For a locally free $\Ocal_S$-module $\Gcal$, we have the formula
\[
	\Phi_{\Po^\dagger}((e_{E^\dagger})_*\Gcal)=\pi^*\Gcal
\]
\item The Fourier--Mukai transform of $\Fcal\in \Uninf$ is concentrated in cohomological degree zero, i.e.
	\[
		H^i(\Phi_{\Po^\dagger}(\Fcal))=0\quad \text{for } i\neq 0.
	\]
\end{enumerate}
\end{lem}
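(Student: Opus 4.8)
The plan is to establish (a) first, as it carries the geometric content, and then to deduce (b) either by a finiteness observation or by dévissage. For (a), I would use that $(e_{E^\dagger})_*\Gcal$ is supported on the unit section, so that its Fourier--Mukai transform is governed by a pushforward along a closed immersion. The unit section sits in the Cartesian square
\[
\begin{tikzcd}
E \ar[r,"\id\times e_{E^\dagger}"]\ar[d,swap,"\pi"] & E\times_S E^\dagger \ar[d,"\pr_{E^\dagger}"]\\
S \ar[r,swap,"e_{E^\dagger}"] & E^\dagger
\end{tikzcd}
\]
and base change along the flat projection $\pr_{E^\dagger}$ gives $\pr_{E^\dagger}^*(e_{E^\dagger})_*\Gcal\cong(\id\times e_{E^\dagger})_*\pi^*\Gcal$. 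Feeding this into the definition of $\Phi_{\Po^\dagger}$ and applying the projection formula, the integrand becomes $(\id\times e_{E^\dagger})_*\big((\id\times e_{E^\dagger})^*(\Po^\dagger,\con{\Po^\dagger})\otimes\pi^*\Gcal\big)$. The decisive point is that the restriction of the Poincar\'e bundle with connection to the unit section is canonically trivial: the $S$-point $e_{E^\dagger}$ classifies the trivial rigidified line bundle with integrable connection, so by the universal property of $(\Po^\dagger,\con{\Po^\dagger})$ one has $(\id\times e_{E^\dagger})^*(\Po^\dagger,\con{\Po^\dagger})\cong(\Ocal_E,\dd)$. Since $\pr_E\circ(\id\times e_{E^\dagger})=\id_E$ and $\id\times e_{E^\dagger}$ is a closed immersion, applying $R\pr_{E,*}$ leaves exactly $\pi^*\Gcal$ equipped with its canonical relative connection, concentrated in degree zero. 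The only care needed is to check that base change and the projection formula are compatible with the connections, which one verifies on the underlying $\Ocal$-modules.

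For (b), the quickest argument is by support. As $\Fcal\in\Uninf$ is annihilated by $\Jcal^{n+1}$, the sheaf $(\Po^\dagger,\con{\Po^\dagger})\otimes\pr_{E^\dagger}^*\Fcal$ is the pushforward of a quasi-coherent sheaf along the closed immersion $\iota\colon Z:=E\times_S\Inf^n_e E^\dagger\hookrightarrow E\times_S E^\dagger$. The map $\pr_E|_Z\colon Z\to E$ is the base change of $\Inf^n_e E^\dagger\to S$, which is finite because $E^\dagger\to S$ is smooth and hence $\Ocal_{E^\dagger}/\Jcal^{n+1}$ is locally free of finite rank over $\Ocal_S$; thus $\pr_E|_Z$ is finite, in particular affine. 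Writing $R\pr_{E,*}\iota_*=R(\pr_E|_Z)_*$ and using that an affine morphism has vanishing higher direct images on quasi-coherent sheaves yields $H^i(\Phi_{\Po^\dagger}(\Fcal))=0$ for $i\neq0$. Alternatively, one may run a dévissage: the $\Jcal$-adic filtration of $\Fcal$ has graded pieces annihilated by $\Jcal$, hence of the form $(e_{E^\dagger})_*Y_i$ with $Y_i$ locally free over $\Ocal_S$; part (a) shows each has transform $\pi^*Y_i$ concentrated in degree zero, and the distinguished triangles produced by the triangulated functor $\Phi_{\Po^\dagger}$ let one induct on the filtration length via the resulting long exact cohomology sequences.

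I expect the main obstacle to lie in (a): fixing the correct normalization of the trivialization $(\id\times e_{E^\dagger})^*(\Po^\dagger,\con{\Po^\dagger})\cong(\Ocal_E,\dd)$ from the universal property, and transporting the integrable connection faithfully through the base-change and projection-formula isomorphisms. Once (a) is in place together with its connection, part (b) is essentially formal.
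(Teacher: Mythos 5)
Your part (a) is the paper's own argument, step for step: flat base change along the same Cartesian square, the canonical $\Dcal_{E/S}$-linear trivialization $(\id\times e_{E^\dagger})^*(\Po^\dagger,\con{\Po^\dagger})\cong(\Ocal_E,\dd)$ furnished by the rigidification, exactness of pushforward along the affine immersion $\id\times e_{E^\dagger}$, and $\pr_E\circ(\id\times e_{E^\dagger})=\id_E$. For part (b), however, your primary argument differs genuinely from the paper's: the paper proceeds by induction on $n$, applying $\Phi_{\Po^\dagger}$ to $0\to\Jcal^n\Fcal\to\Fcal\to\Fcal/\Jcal^n\Fcal\to0$ (outer terms in $\mathrm{U}_0(\Ocal_{E^\dagger})$ and $\mathrm{U}_{n-1}(\Ocal_{E^\dagger})$) and reading off the cohomology of the resulting distinguished triangle --- which is exactly your ``alternative'' d\'evissage. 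Your support argument is more direct and somewhat stronger: since $\Jcal^{n+1}\Fcal=0$, the integrand $\Po^\dagger\otimes\pr_{E^\dagger}^*\Fcal$ is annihilated by the ideal of $Z=E\times_S\Inf^n_e E^\dagger$, hence equals $\iota_*$ of a quasi-coherent sheaf on $Z$; as $\Ocal_{E^\dagger}/\Jcal^{n+1}$ is finite locally free over $\Ocal_S$ (by smoothness of $E^\dagger/S$), the map $\pr_E|_Z$ is finite, so $R\pr_{E,*}\iota_*=R(\pr_E|_Z)_*$ sends quasi-coherent sheaves to complexes concentrated in degree zero. This avoids induction altogether, never uses local freeness of the graded pieces $\Jcal^i\Fcal/\Jcal^{i+1}\Fcal$ (so it proves concentration in degree zero for \emph{any} quasi-coherent sheaf killed by a power of $\Jcal$), and matches the description of $\widehat{\Fcal}^\dagger$ as a pushforward from $E\times_S\Inf^n_e E^\dagger$ that the paper itself exploits later in \Cref{DR_propFM}. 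What the paper's d\'evissage buys instead is uniformity: the identical inductive skeleton built on part (a) and the triangulated structure is reused immediately afterwards to prove well-definedness and essential surjectivity of the equivalence $\Uninf\righteq\Und{E/S}$ in \Cref{prop_FMequiv}. Two small points you should make explicit in a final write-up: the identification of the integrand as a pushforward from $Z$ uses flat base change plus the projection formula for the closed immersion $\iota$ (routine, but this is where quasi-coherence of $\Fcal$ enters), and the vanishing $H^i=0$ may legitimately be checked on underlying $\Ocal$-modules because forgetting the $\Dcal_{E/S}$-structure is an exact and faithful functor.
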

\begin{proof} We follow closely the argument of Scheider, see {\cite[Prop. 2.2.6, Thm. 2.2.12 (i)]{rene}}:\par 
$(a)$ Base change along the Cartesian diagram
\[
\begin{tikzcd}
	E \ar[r,"\id\times e_{E^\dagger}"]\ar[d,"\pi"] & E\times_S E^\dagger\ar[d,"\pr_{E^\dagger}"] \\
	S\ar[r,"e_{E^\dagger}"] & E^\dagger
\end{tikzcd}
\]
gives a canonical isomorphism of $ \Dcal_{E\times_S E^\dagger/E^\dagger}$-modules:
\[
	\pr_{E^\dagger}^*(e_{E^\dagger})_*\Gcal\simeq (\id\times e_{E^\dagger})_*\pi^*\Gcal.
\]
The trivialization of the Poincar\'e bundle gives a $\Dcal_{E/S}$-linear isomorphism
\[
	(\id\times e_{E^\dagger})^*(\Po^\dagger,\nabla_{\dagger})\simeq (\Ocal_E,\mathrm{d}).
\]
Thus, we get an isomorphism of $\Dcal_{E\times_S E^\dagger/E^\dagger}$-modules:
\[
	\Po^\dagger\otimes_{\Ocal_{E\times_S E^\dagger}}\pr_{E^\dagger}^*(e_{E^\dagger})_*\Gcal\cong (\id\times e_{E^\dagger})_*\pi^*\Gcal.
\]
Since $\id\times e_{E^\dagger}$ is affine, it is exact and we get
\begin{align*}
	\Phi_{\Po^\dagger}((e_{E^\dagger})_*\Gcal)&=R(\pr_{E})_*\left(\Po^\dagger\otimes_{\Ocal_{E\times_S E^\dagger}}\pr_{E^\dagger}^*(e_{E^\dagger})_*\Gcal\right)\cong R(\pr_{E})_*R(\id\times e_{E^\dagger})_*\pi^*\Gcal\\
	&=R(\id_E)_*\pi^*\Gcal=\pi^*\Gcal.
\end{align*}
$(b)$ Let us prove the claim by induction on $n$. For $n=0$ we have $\Fcal=(e_{E^\dagger})_*\Gcal$ for some locally-free $\Ocal_S$-module $\Gcal$. Thus, the case $n=0$ follows from $(a)$. For $n>0$ the claim follows by induction: Indeed, the exact sequence
\[
	0\rightarrow \Jcal ^n\Fcal\rightarrow \Fcal \rightarrow \Fcal/\Jcal^n\Fcal\rightarrow 0
\]
induces a triangle
\[
	\Phi_{\Po^\dagger}(\Jcal^n\Fcal)\rightarrow \Phi_{\Po^\dagger}(\Fcal)\rightarrow \Phi_{\Po^\dagger}(\Fcal/\Jcal^n\Fcal)\rightarrow \Phi_{\Po^\dagger}(\Jcal^n\Fcal)[1]
\]
in $D^b_{qc}(\Dcal_{E/S})$. Now we conclude since $\Phi_{\Po^\dagger}(\Jcal^n\Fcal)$ and $\Phi_{\Po^\dagger}(\Fcal/\Jcal^n\Fcal)$ are concentrated in degree zero by the induction hypothesis.
\end{proof}
\begin{prop}[{\cite[Theorem 2.2.12]{rene}}]\label{prop_FMequiv}
	The functor
	\[
		\Uninf\righteq \Und{E/S}, \quad \Fcal\mapsto \hat{\Fcal}^\dagger:=H^0(\Phi_{\Po^\dagger}(\Fcal))
	\]
	is a well-defined equivalence of categories.
\end{prop}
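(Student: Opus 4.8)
The plan is to obtain the asserted equivalence by restricting Laumon's derived equivalence $\Phi_{\Po^\dagger}\colon D^b_{qc}(\Ocal_{E^\dagger})\righteq D^b_{qc}(\Dcal_{E/S})$ to the subcategory $\Uninf$. First I would check well-definedness. For $\Fcal\in\Uninf$, \Cref{lem_FMScheider}(b) shows that $\Phi_{\Po^\dagger}(\Fcal)$ is concentrated in cohomological degree zero, so $\hat{\Fcal}^\dagger=\Phi_{\Po^\dagger}(\Fcal)$ is an honest $\Dcal_{E/S}$-module. Applying $\Phi_{\Po^\dagger}$ to the $\Jcal$-adic filtration of $\Fcal$, and using that the transforms of all graded pieces are concentrated in degree zero (again \Cref{lem_FMScheider}(b)), the associated long exact sequences degenerate into short exact sequences. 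Hence the $\Jcal$-adic filtration is carried to a finite filtration of $\hat{\Fcal}^\dagger$ whose graded pieces are, by \Cref{lem_FMScheider}(a), of the form $\Phi_{\Po^\dagger}\big((e_{E^\dagger})_*(\Jcal^i\Fcal/\Jcal^{i+1}\Fcal)\big)=\pi^*(\Jcal^i\Fcal/\Jcal^{i+1}\Fcal)$. Since each graded piece is locally free over $\Ocal_S$, this exhibits $\hat{\Fcal}^\dagger$ as a vector bundle which is unipotent of length $n$ for $E/S$; thus the functor lands in $\Und{E/S}$ and is exact.

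Full faithfulness is then essentially formal. For $\Fcal,\Gcal\in\Uninf$ both $\Fcal,\Gcal$ and their transforms $\hat{\Fcal}^\dagger,\hat{\Gcal}^\dagger$ are concentrated in degree zero, so
\[
\Hom_{\Uninf}(\Fcal,\Gcal)=\Hom_{D^b_{qc}(\Ocal_{E^\dagger})}(\Fcal,\Gcal)\cong \Hom_{D^b_{qc}(\Dcal_{E/S})}(\hat{\Fcal}^\dagger,\hat{\Gcal}^\dagger)=\Hom_{\Und{E/S}}(\hat{\Fcal}^\dagger,\hat{\Gcal}^\dagger),
\]
where the middle isomorphism is the full faithfulness of Laumon's equivalence and the outer equalities hold because $\Hom$ between objects placed in degree zero agrees with $\Hom$ in the respective abelian categories.

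The substantial part is essential surjectivity, which I would prove by induction on the length $n$ of a unipotent object $\Ucal\in\Und{E/S}$. For $n=0$ one has $\Ucal=\pi^*Y$, and \Cref{lem_FMScheider}(a) gives $\Ucal\cong\hat{\Fcal}^\dagger$ with $\Fcal=(e_{E^\dagger})_*Y$, which lies in $\mathrm{U}_0(\Ocal_{E^\dagger})$. For the inductive step, the filtration step $A^1\Ucal$ is unipotent of length $n-1$, so by induction $A^1\Ucal\cong\hat{\Gcal}^\dagger$ for some $\Gcal$ in $\mathrm{U}_{n-1}(\Ocal_{E^\dagger})$, while $\Ucal/A^1\Ucal=\pi^*Y_0\cong\hat{\Hcal}^\dagger$ for $\Hcal=(e_{E^\dagger})_*Y_0$. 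The extension class of $\Ucal$ lies in $\Ext^1_{\VIC{E/S}}(\pi^*Y_0,A^1\Ucal)$, and via the derived equivalence this group is identified with $\Ext^1_{\Ocal_{E^\dagger}}(\Hcal,\Gcal)$; transporting the class back produces an extension $0\to\Gcal\to\Fcal\to\Hcal\to 0$ of $\Ocal_{E^\dagger}$-modules with $\Phi_{\Po^\dagger}(\Fcal)\cong\Ucal$. The hard part will be to verify that this $\Fcal$ genuinely lies in $\Uninf$: the condition $\Jcal^{n+1}\Fcal=0$ is immediate from $\Jcal\Fcal\subseteq\Gcal$ and $\Jcal^n\Gcal=0$, but the delicate point is that the $\Jcal$-adic graded pieces of $\Fcal$ remain locally free over $\Ocal_S$. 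I expect this control of the $\Jcal$-adic structure of the inverse transform $\Psi=\Phi_{\Po^\dagger}^{-1}$ to be the principal obstacle, and it is precisely here that one must match the horizontal filtration of the unipotent bundle $\Ucal$ with the $\Jcal$-adic filtration of $\Fcal$, again feeding \Cref{lem_FMScheider} and the characterization of $\Und{E/S}$ by pulled-back graded pieces into the argument. Once $\Fcal\in\Uninf$ is established, $\hat{\Fcal}^\dagger\cong\Ucal$ completes the induction and hence the proof that the functor is an equivalence.
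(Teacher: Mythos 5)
Up to its final step your proposal follows the paper's proof line by line: well-definedness by applying $\Phi_{\Po^\dagger}$ to a filtration with graded pieces of the form $(e_{E^\dagger})_*(\text{locally free})$ and invoking \Cref{lem_FMScheider}; full faithfulness as a formal consequence of Laumon's equivalence together with concentration in degree zero; and essential surjectivity by induction on the length, producing $\Fcal$ as the extension of $\Fcal''=(e_{E^\dagger})_*Y_0$ by $\Fcal'$ obtained by transporting the class of $0\to A^1\Ucal\to\Ucal\to\pi^*Y_0\to 0$ through the equivalence (the paper phrases this by applying a quasi-inverse to the distinguished triangle, which is the same manoeuvre). The genuine gap is exactly where you stop: you declare the membership $\Fcal\in\Uninf$ to be ``the principal obstacle'' and leave it open, whereas this is the statement that closes the induction. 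The paper disposes of it in one line, as an instance of stability of unipotence under extensions: an extension of an object of $\mathrm{U}_0(\Ocal_{E^\dagger})$ by an object of $\mathrm{U}_{n-1}(\Ocal_{E^\dagger})$ lies in $\mathrm{U}_n(\Ocal_{E^\dagger})$. Without some form of this, your induction does not terminate, so the proposal is incomplete.

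Moreover, the repair you sketch --- controlling the $\Jcal$-adic graded pieces of $\Fcal=H^0(\Phi_{\Po^\dagger}^{-1}(\Ucal))$ --- is a dead end, because read $\Jcal$-adically the needed statement is false. Take $S=\mathbb{A}^1_k=\Spec k[t]$ and recall $\Ocal_{E^\dagger}/\Jcal^2=\Ocal_S\oplus\Hcal$. Let $\Fcal=\Ocal_S g\oplus\Ocal_S f$ with module structure $h\cdot f=t\,\xi_0(h)\,g$ and $h\cdot g=0$ for $h\in\Hcal$, where $\xi_0\colon\Hcal\to\Ocal_S$ is any surjection. This $\Fcal$ is an extension of $(e_{E^\dagger})_*\Ocal_S$ by $(e_{E^\dagger})_*\Ocal_S$, so by \Cref{lem_FMScheider} its transform $\Phi_{\Po^\dagger}(\Fcal)$ is an extension of $\Ocal_E$ by $\Ocal_E$ in $\Dcal_{E/S}$-modules, hence a perfectly good object of $\mathrm{U}^\dagger_1(E/S)$; yet $\Jcal\Fcal=t\Ocal_S g$ and $\Fcal/\Jcal\Fcal$ has $t$-torsion, so the $\Jcal$-adic graded pieces of $\Fcal$ are \emph{not} locally free. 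Thus ``locally free $\Jcal$-adic graded pieces'' is not extension-stable, and (since $\Phi_{\Po^\dagger}$ is injective on isomorphism classes) no argument can show that the inverse transform of every unipotent bundle has this property. What makes the induction close --- and what the paper's one-line conclusion implicitly uses --- is the extension-closed reading of unipotence on the $E^\dagger$-side: $\Fcal$ admits \emph{some} finite filtration by $\Ocal_{E^\dagger}$-submodules whose graded pieces are locally free $\Ocal_S$-modules killed by $\Jcal$, i.e.~of the form $(e_{E^\dagger})_*(\text{locally free})$. With that characterization your $\Fcal$ is unipotent simply by splicing a filtration of the submodule $\Gcal$ with the quotient, no control of the $\Jcal$-adic filtration is needed, and both your well-definedness argument (run with such a filtration instead of the $\Jcal$-adic one) and your induction go through verbatim. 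So you correctly sensed that something needs to be said at this point, but the resolution is a change of viewpoint on the definition, not the hard $\Jcal$-adic analysis you anticipate.
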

\begin{proof} Again, we follow Scheider's proof {\cite[Theorem 2.2.12]{rene}}. Let us first prove the well-definedness by induction over $n$. For $n=0$, \Cref{lem_FMScheider} $(a)$ shows that the functor is well-defined. For $n>0$ an object $\Fcal\in \Uninf$ fits in an exact sequence
\[
	0\rightarrow \Jcal ^n\Fcal\rightarrow \Fcal \rightarrow \Fcal/\Jcal^n\Fcal\rightarrow 0
\]
with $\Jcal ^n\Fcal\in \mathrm{U}_0(\Ocal_{E^\dagger})$ and $\Fcal/\Jcal^n\Fcal\in \mathrm{U}_{n-1}(\Ocal_{E^\dagger})$. We have seen in \Cref{lem_FMScheider} $(b)$ that the Fourier--Mukai transform of each term in the above exact sequence is concentrated in degree $0$, thus we get the short exact sequence
\[
	0\rightarrow H^0(\Phi_{\Po^\dagger}(\Jcal ^n\Fcal))\rightarrow H^0(\Phi_{\Po^\dagger}(\Fcal)) \rightarrow H^0(\Phi_{\Po^\dagger}(\Fcal/\Jcal^n\Fcal))\rightarrow 0.
\]
By the induction hypothesis, $H^0(\Phi_{\Po^\dagger}(\Jcal ^n\Fcal))\in \mathrm{U}_0^\dagger(E/S)$, $H^0(\Phi_{\Po^\dagger}(\Fcal/\Jcal^n\Fcal))\in \mathrm{U}_{n-1}^\dagger(E/S)$ and we deduce $H^0(\Phi_{\Po^\dagger}(\Fcal))\in \Und{E/S}$. This proves the well-definedness.\par 
Since $\Phi_{\Po^\dagger}$ is an equivalence and all considered sheaves are concentrated in cohomological degree zero, we deduce that $\Uninf\rightarrow \Und{E/S}$ is fully faithful. The proof that $\hat{(\cdot)}^\dagger$ is essentially surjective proceeds again by induction over $n$. For $n=0$, we have $\Fcal=(e_{E^\dagger})_*\Gcal$ for some locally free $\Ocal_S$-module and the explicit formula $\hat{\Fcal}^\dagger=\pi^*\Gcal$ establishes the case $n=0$. Let $n>0$ and assume that we already know that $\hat{(\cdot )}^\dagger\colon\mathrm{U}_m(\Ocal_{E^\dagger})\rightarrow \mathrm{U}_m^\dagger(E/S)$ is essentially surjective for $m<n$. Every unipotent sheaf $\Ucal\in\Und{E/S}$ sits in an exact sequence
\[
	0\rightarrow A^1\Ucal\rightarrow \Ucal \rightarrow \pi^*\Ycal\rightarrow 0
\]
for some locally free $\Ocal_S$-module $\Ycal$. By the induction hypothesis there exists $\Fcal'\in \mathrm{U}_{n-1}(\Ocal_{E^\dagger})$ and $\Fcal''\in \mathrm{U}_{0}(\Ocal_{E^\dagger}) $ with $\widehat{(\Fcal')}^\dagger\cong A^1\Ucal$ and $\widehat{(\Fcal'')}^\dagger\cong \pi^*\Ycal$, i.e.
\[
	0\rightarrow \widehat{(\Fcal')}^\dagger\rightarrow \Ucal\rightarrow\widehat{(\Fcal'')}^\dagger\rightarrow 0.
\]
This gives us a distinguished triangle
\begin{equation}\label{eq_triangle1}
	\Phi_{\Po^\dagger}(\Fcal')\rightarrow \Ucal\rightarrow \Phi_{\Po^\dagger}(\Fcal')\rightarrow \Phi_{\Po^\dagger}(\Fcal')[1]
\end{equation}
in $D^b_{qc}(\Dcal_{E/S})$. Let us denote by $\Phi_{\Po^\dagger}^{-1}$ an quasi-inverse of the equivalence $\Phi_{\Po^\dagger}$. Applied to \eqref{eq_triangle1} this yields a triangle in $D^b_{qc}(\Ocal_{E^\dagger})$:
\[
	\Phi_{\Po^\dagger}^{-1}(\Phi_{\Po^\dagger}(\Fcal'))\rightarrow \Phi_{\Po^\dagger}^{-1}(\Ucal)\rightarrow \Phi_{\Po^\dagger}^{-1}(\Phi_{\Po^\dagger}(\Fcal'))\rightarrow \Phi_{\Po^\dagger}^{-1}(\Phi_{\Po^\dagger}(\Fcal'))[1].
\]
Since $\Phi_{\Po^\dagger}^{-1}(\Phi_{\Po^\dagger}(\Fcal'))\cong \Fcal'$ and $\Phi_{\Po^\dagger}^{-1}(\Phi_{\Po^\dagger}(\Fcal''))\cong \Fcal''$ we may replace $\Phi_{\Po^\dagger}^{-1}(\Phi_{\Po^\dagger}(\Fcal'))$ by $\Fcal'$ and $\Phi_{\Po^\dagger}^{-1}(\Phi_{\Po^\dagger}(\Fcal''))$  by $\Fcal''$ in the above triangle:
\[
	\Fcal'\rightarrow \Phi_{\Po^\dagger}^{-1}(\Ucal)\rightarrow \Fcal''\rightarrow \Fcal'[1].
\]
Because $\Fcal'$ and $\Fcal''$ are concentrated in degree zero we deduce that $\Phi_{\Po^\dagger}^{-1}(\Ucal)$ is concentrated in degree zero. Applying $H^0$ gives:
\[
	0\rightarrow \Fcal'\rightarrow H^0(\Phi_{\Po^\dagger}^{-1}(\Ucal))\rightarrow \Fcal''\rightarrow 0
\]
As $\Fcal'\in \mathrm{U}_{n-1}(\Ocal_{E^\dagger})$ and $\Fcal''\in \mathrm{U}_{0}(\Ocal_{E^\dagger}) $ we get $\Fcal:=H^0(\Phi_{\Po^\dagger}^{-1}(\Ucal))\in \Und{E/S}$. Applying $\widehat{(\cdot)}^\dagger$ gives
\[
	0\rightarrow \widehat{(\Fcal')}^\dagger\rightarrow \widehat{(\Fcal)}^\dagger\rightarrow\widehat{(\Fcal'')}^\dagger\rightarrow 0.
\]
Using once again, that $\Phi_{\Po^\dagger}^{-1}(\Ucal)$ is concentrated in degree zero, we deduce 
\[
\widehat{\Fcal}^\dagger=H^0(\Phi_{E^\dagger}(H^0(\Phi_{\Po^\dagger}^{-1}(\Ucal))))\cong \Ucal.
\]
This proves that $\Ucal$ is in the essential image of $\widehat{(\cdot)}^\dagger$ and concludes the induction step of the essential surjectivity of $\widehat{(\cdot)}^\dagger$.
\end{proof}
Finally, let us state the following result:
\begin{prop}[{\cite[Prop. 2.2.16]{rene}}]\leavevmode\label{DR_propFM}
For $\Fcal\in\Uninf$ there is a canonical isomorphism
	\[
		e^*\widehat{\Fcal}^\dagger\righteq (\pi_n)_*\Fcal.
	\]
	Here, $\pi_{n}: \Inf^n_e E^\dagger\rightarrow S$ is the structure morphism of $\Inf^n_e E^\dagger$. 
\end{prop}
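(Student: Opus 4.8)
The plan is to exploit two facts already available to us: that the Fourier--Mukai transform of an object of $\Uninf$ is an honest (degree-zero) pushforward, and that the Poincar\'e bundle becomes canonically trivial when restricted along the unit section $e\colon S\to E$.

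First I would use \Cref{lem_FMScheider}~$(b)$ to rewrite $\widehat{\Fcal}^\dagger=H^0(\Phi_{\Po^\dagger}(\Fcal))$ as the ordinary pushforward
\[
	\widehat{\Fcal}^\dagger=(\pr_E)_*\left(\Po^\dagger\otimes_{\Ocal_{E\times_S E^\dagger}}\pr_{E^\dagger}^*\Fcal\right),
\]
all higher direct images vanishing. The essential observation is that, since $\Jcal^{n+1}\Fcal=0$, the sheaf $\Fcal$ is scheme-theoretically supported on $\Inf^n_e E^\dagger$; hence $\Po^\dagger\otimes\pr_{E^\dagger}^*\Fcal$ is supported on $E\times_S\Inf^n_e E^\dagger$, and $\pr_E$ restricted to this closed subscheme is \emph{finite} over $E$, because $\Inf^n_e E^\dagger\to S$ is finite. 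Thus the transform is genuinely computed along an affine morphism.

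Next I would set up the Cartesian square obtained by base-changing $\pr_E$ along the unit section:
\[
\begin{tikzcd}
	\Inf^n_e E^\dagger \ar[r,"e\times\id"]\ar[d,swap,"\pi_n"] & E\times_S \Inf^n_e E^\dagger \ar[d,"\pr_E"]\\
	S\ar[r,swap,"e"] & E.
\end{tikzcd}
\]
Because the right vertical map is affine, quasi-coherent pushforward commutes with the base change $e$, yielding a canonical isomorphism $e^*\widehat{\Fcal}^\dagger\cong(\pi_n)_*(e\times\id)^*\left(\Po^\dagger\otimes\pr_{E^\dagger}^*\Fcal\right)$. It then remains to identify the restriction. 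On the one hand $(e\times\id)^*\pr_{E^\dagger}^*\Fcal=\Fcal$, since $\pr_{E^\dagger}\circ(e\times\id)=\id$. On the other hand $(e\times\id)^*\Po^\dagger\cong\Ocal_{E^\dagger}$: writing $\Po^\dagger=(\id_E\times q^\dagger)^*\Po$ and noting that $(\id_E\times q^\dagger)\circ(e\times\id)=(e\times\id_{E^\vee})\circ q^\dagger$, the bi-rigidification trivialization $s_0\colon(e\times\id)^*\Po\righteq\Ocal_{E^\vee}$ gives $(e\times\id)^*\Po^\dagger\cong(q^\dagger)^*\Ocal_{E^\vee}=\Ocal_{E^\dagger}$. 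Restricting everything to $\Inf^n_e E^\dagger$ and tensoring identifies the right-hand side with $(\pi_n)_*\Fcal$, as desired.

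The one point requiring care is the base change, which is where I expect the main (minor) obstacle to lie. Since $E^\dagger$ is not proper over $S$ --- its fibres contain the affine vector group $V(\omega_{E/S})$ --- the projection $\pr_E\colon E\times_S E^\dagger\to E$ is not proper, so proper base change is unavailable and one cannot apply it naively along the non-flat closed immersion $e$. The support reduction of the first step is precisely what rescues the argument: restricted to $E\times_S\Inf^n_e E^\dagger$ the morphism $\pr_E$ is finite, hence affine, and quasi-coherent pushforward along an affine morphism is exact and commutes with \emph{arbitrary} base change --- as one verifies immediately on affine charts, where both $e^*(\pr_E)_*$ and $(\pi_n)_*(e\times\id)^*$ compute the same module. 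This is what legitimizes the base-change isomorphism above and thereby the whole computation.
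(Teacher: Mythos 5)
Your proof is correct and follows essentially the same route as the paper: reduce to the closed subscheme $E\times_S\Inf^n_e E^\dagger$ using that $\Fcal$ is killed by $\Jcal^{n+1}$, apply base change for the Cartesian square over the unit section $e$, and conclude via the rigidification $(e\times\id)^*\Po^\dagger\cong\Ocal_{E^\dagger}$. Your explicit justification of the base-change step --- that the restricted projection is finite, hence affine, so quasi-coherent pushforward commutes with arbitrary base change --- is a point the paper leaves implicit, and it is exactly the right reason the argument works despite $\pr_E$ being non-proper.
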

\begin{proof} For the convenience of the reader, let us recall Scheider's proof. Let us view $\Fcal\in\Uninf$ as an $\Ocal_{\Inf^n_e E^\dagger}$-module. The base change isomorphism associated to the diagram
\[
\begin{tikzcd}
	E\times_S\Inf^n E^\dagger\ar[r,"\id\times\iota_n"]\ar[d,"\pr_{\Inf^n_e E^\dagger}"] & E\times_S E^\dagger \ar[d,"\pr_E"] \\
	\Inf^n E^\dagger \ar[r,"\iota_n"] & E
\end{tikzcd}
\]
gives an horizontal isomorphism 
\[
	\widehat{\Fcal}^\dagger \cong (\pr_E)_*\left(\pr_{\Inf^n E^\dagger}^*\Fcal\otimes_{\Ocal_{E\times_S \Inf^n E^\dagger}} (\id\times\iota_n)^*\Po^\dagger \right).
\]
Again, by base change along the diagram
\[
\begin{tikzcd}
	\Inf^n E^\dagger\ar[r,"\pi_n"]\ar[d,"e\times\id"] & S \ar[d,"e"] \\
	E\times_S \Inf^n E^\dagger \ar[r,"\pr_E"] & E
\end{tikzcd}
\]
we obtain the desired $\Ocal_S$-linear isomorphism
\begin{align*}
	e^*\widehat{\Fcal}^\dagger &\cong  e^*(\pr_E)_*\left(\pr_{\Inf^n E^\dagger}^*\Fcal\otimes_{\Ocal_{E\times_SE^\dagger}} (\id\times\iota_n)^*\Po^\dagger \right)\cong\\
	&\cong (\pi_n)_* (e\times\id)^*\left(\pr_{\Inf^n E^\dagger}^*\Fcal\otimes_{\Ocal_{E\times_SE^\dagger}} (\id\times\iota_n)^*\Po^\dagger \right) \cong\\
	&\cong (\pi_n)_*\Fcal.
\end{align*}
\end{proof}

\subsection{The geometric logarithm sheaves as de Rham logarithm sheaves}
The aim of this section is to show that the geometric logarithm sheaves $\Ln^\dagger$ give us a concrete geometric realization of the abstractly defined de Rham logarithm sheaves. This is one of the main results of Scheider \cite[Theorem 2.3.1]{rene}. By working with the universal property of the logarithm sheaves instead of its extension class, we can give a much simpler proof then the original one. The idea our proof is quite simple: The pair $(\Ocal_{E^\dagger}/\Jcal^{n+1},1)$ is initial in the category consisting of pairs $(\Fcal,s)$ of unipotent $\Ocal_{E^\dagger}/\Jcal^{n+1}$ modules $\Fcal\in \mathrm{U}_n(\Ocal_{E^\dagger})$ with a marked section $s\in \Gamma(S,(\pi_n)_*\Fcal)$. Thus, the Fourier--Mukai transform of $(\Ocal_{E^\dagger}/\Jcal^{n+1},1)$ should be initial in the corresponding category of unipotent vector bundles (with a marked section) through the equivalence of categories:
\[
	\Uninf\righteq \Und{E/S}.
\]
Now recall, that being initial in the category of unipotent vector bundles with a marked section is exactly the universal property of the logarithm sheaves. Finally, the formula
\[
	\widehat{\left(\Ocal_{E^\dagger}/\Jcal^{n+1}\right)}^\dagger=(\pr_E)_*\left( (\pr_{E^\dagger})^*\Ocal_{\Inf^n_e E^\dagger}\otimes_{\Ocal_{E\times E^\dagger}} \Po^\dagger \right)=(\pr_E)_*(\id\times \iota_n^\dagger)^*\Po^\dagger\stackrel{\Def}{=}\Ln^\dagger,
\]
which has already been observed by Scheider, allows us to conclude that the geometric logarithm sheaves satisfy the universal property of the abstractly defined logarithm sheaves.\par 

In order to make this argument work, we have to be more precise: First, let us observe that
\[
	e^*\Ln^\dagger\righteq \Ocal_{\Inf^n_e E^\dagger}=\Ocal_{E^\dagger}/\Jcal^{n+1}
\]
gives us a canonical section $1\in \Gamma(S,\Ocal_{\Inf^n_e E^\dagger})\cong \Gamma(S,e^*\Ln^\dagger)$ of $e^*\Ln^\dagger$. Furthermore, let us observe that the connection $\con{\Ln^\dagger}$ is a connection relative $S$, while the connection $\con{\Logn}$ is an absolute connection. In order to compare both objects, we have to restrict the connection $\con{\Logn}$ relative $S$. Let us define $\con{\Logn,E/S}:=\res_S(\con{\Logn})$. Now, we can state one of the main results of Scheider's PhD thesis:
\begin{thm}[{Scheider,\cite[Thm 2.3.1]{rene}}]\label{DR_thmRene}
There is a unique horizontal isomorphism
\[
	(\Logn,\con{\Logn,E/S})\rightarrow (\Ln^\dagger,\con{\Ln^\dagger})
\]
mapping $\idM^{(n)}$ to $1$ after pullback along $e$.
\end{thm}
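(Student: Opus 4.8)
The plan is to realize both $(\Logn,\con{\Logn,E/S})$ and $(\Ln^\dagger,\con{\Ln^\dagger})$ as the initial object of one and the same category of pairs, so that the asserted isomorphism, together with its uniqueness, is produced by the uniqueness of initial objects. Let $\mathcal{B}$ be the category whose objects are pairs $(\Ucal,s)$ with $\Ucal\in\Und{E/S}$ and a marked section $s\in\Gamma(S,e^*\Ucal)$, and whose morphisms $(\Ucal,s)\to(\Ucal',s')$ are the $S$-horizontal maps $\phi\colon\Ucal\to\Ucal'$ with $(e^*\phi)(s)=s'$. I will show that $(\Logn,\idM^{(n)})$ and $(\Ln^\dagger,1)$ are both initial in $\mathcal{B}$.

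For the abstract logarithm sheaf this is the universal property of \Cref{log_univ}, read relatively. Since the Fourier--Mukai transform outputs $\Dcal_{E/S}$-modules, its natural target is the relative category $\Und{E/S}$; this forces the comparison of connections to be relative and the marked section to be treated as a mere section, which is exactly why the statement replaces $\con{\Logn}$ by $\con{\Logn,E/S}=\res_S\con{\Logn}$. I would observe that the proof of \Cref{log_univ} is relative in nature: the duality \eqref{eq_duality}, the vanishing of the transition maps provided by \Cref{prop_transition}, and the induction on the unipotent length all take place in the relative de Rham cohomology of the logarithm sheaves and involve $\Ucal$ only through its graded pieces, which for $\Ucal\in\Und{E/S}$ are pullbacks $\pi^*Y_i$ of plain vector bundles on $S$. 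Rerunning that induction verbatim yields, for every $\Ucal\in\Und{E/S}$, a bijection $\Hom^{\mathrm{hor}}_{E/S}(\Logn,\Ucal)\righteq\Gamma(S,e^*\Ucal)$, $\phi\mapsto(e^*\phi)(\idM^{(n)})$; that is, $(\Logn,\idM^{(n)})$ is initial in $\mathcal{B}$.

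For the geometric logarithm sheaf I would transport a tautological initiality statement across \Cref{prop_FMequiv}. Let $\mathcal{A}$ be the category of pairs $(\Fcal,\sigma)$ with $\Fcal\in\Uninf$ and $\sigma\in\Gamma(S,(\pi_n)_*\Fcal)$, morphisms being $\Ocal_{E^\dagger}$-linear maps compatible with $\sigma$. As $\Ocal_{E^\dagger}/\Jcal^{n+1}=\Ocal_{\Inf^n_e E^\dagger}$ is the structure sheaf and $\Jcal^{n+1}$ kills every object of $\Uninf$, each $\sigma$ is the image of $1$ under a unique $\Ocal_{E^\dagger}$-linear map, so $(\Ocal_{E^\dagger}/\Jcal^{n+1},1)$ is initial in $\mathcal{A}$. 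The equivalence $\widehat{(\cdot)}^\dagger\colon\Uninf\righteq\Und{E/S}$ together with the natural isomorphism $e^*\widehat{\Fcal}^\dagger\righteq(\pi_n)_*\Fcal$ of \Cref{DR_propFM} promotes to an equivalence $\mathcal{A}\righteq\mathcal{B}$ identifying marked sections; by the formula $\widehat{(\Ocal_{E^\dagger}/\Jcal^{n+1})}^\dagger=\Ln^\dagger$ and the compatibility of this identification with $\triv_e$ (sending $1$ to $1$), it carries the initial object of $\mathcal{A}$ to $(\Ln^\dagger,1)$, which is thus initial in $\mathcal{B}$.

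Having two initial objects of $\mathcal{B}$, I obtain a unique isomorphism $(\Logn,\idM^{(n)})\righteq(\Ln^\dagger,1)$ in $\mathcal{B}$; unwinding the definition of $\mathcal{B}$, this is an $S$-horizontal isomorphism sending $\idM^{(n)}$ to $1$ after pullback along $e$, as required. I expect the main point requiring care to be the absolute-versus-relative connection bookkeeping: one must invoke \Cref{log_univ} in its relative form --- with sections rather than horizontal sections --- because the Poincar\'e-bundle side only ever carries the relative connection $\con{\Ln^\dagger}$, and one must check that the identification in \Cref{DR_propFM} is natural in $\Fcal$ so that it genuinely upgrades to an equivalence of the two pair-categories.
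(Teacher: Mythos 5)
Your proposal is correct and follows essentially the same route as the paper: the paper's \Cref{DR_thmUnivLog} is exactly your transport of the tautological initiality of $(\Ocal_{E^\dagger}/\Jcal^{n+1},1)$ through the Fourier--Mukai equivalence \Cref{prop_FMequiv} and the identification $e^*\widehat{\Fcal}^\dagger\cong(\pi_n)_*\Fcal$ of \Cref{DR_propFM}, after which \Cref{DR_thmRene} follows by matching against the universal property of $(\Logn,\idM^{(n)})$. Your explicit care about reading \Cref{log_univ} relatively (test objects in $\Und{E/S}$ rather than $\Und{E/S/\KK}$, with the conclusion only $\Ocal_S$-linear) is the same point the paper handles implicitly when invoking \Cref{log_univ} in the proof of \Cref{DR_lemdefLn}.
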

The proof of  is long and involved. Scheider compares the extension classes of both tuples. It is much simpler to deduce this isomorphism by proving that both objects satisfy the same universal property, or stated differently that they are initial in the same category. Since we already know the universal property of the abstractly defined logarithm sheaves, it remains to show that the geometric logarithm sheaves satisfy the same universal property:
\begin{thm}\label{DR_thmUnivLog}
	The tuple $(\Ln^\dagger,\con{\Ln^\dagger},1)$ is the unique tuple, up to unique isomorphism, consisting of an object $(\Ln^\dagger,\con{\Ln^\dagger})\in \mathrm{U}^\dagger_n(E/S)$ and a section 
	$$1\in \Gamma(S,e^*\Ln)=\Gamma(S,\Ocal_{\Inf^n_e E^\dagger})$$
	such that the following universal property holds: For all $\Gcal\in \mathrm{U}^\dagger_n(E/S)$ the map
	\[
		\pi_* \HOM^{\mathrm{hor}}_{E/S}\left(\Ln^\dagger,\Gcal \right)\rightarrow e^*\Gcal,\quad f\mapsto (e^*f)(1)
	\]
	is an isomorphism of $\Ocal_S$-modules.
\end{thm}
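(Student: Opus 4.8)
The plan is to deduce the universal property for $(\Ln^\dagger,\con{\Ln^\dagger},1)$ by transporting the \emph{tautological} universal property of the pair $(\Ocal_{\Inf^n_e E^\dagger},1)$ on the linear side through Laumon's Fourier--Mukai equivalence. First I would record the statement over $E^\dagger$. Any $\Fcal\in\Uninf$ is annihilated by $\Jcal^{n+1}$ and may thus be regarded as a coherent sheaf on $\Inf^n_e E^\dagger$; evaluation at $1$ then identifies
\[
	(\pi_n)_*\HOM_{\Ocal_{\Inf^n_e E^\dagger}}\!\left(\Ocal_{\Inf^n_e E^\dagger},\Fcal\right)\righteq (\pi_n)_*\Fcal,\qquad \phi\mapsto\phi(1),
\]
which exhibits $(\Ocal_{\Inf^n_e E^\dagger},1)$ as the initial object of the category of pairs $(\Fcal,s)$ with $\Fcal\in\Uninf$ and a marked section $s\in\Gamma(S,(\pi_n)_*\Fcal)$.

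Next I would translate this across the equivalence of \Cref{prop_FMequiv}. Writing $\Gcal=\widehat{\Fcal}^\dagger$ for the object $\Fcal\in\Uninf$ corresponding to $\Gcal$ under that equivalence, the crucial observation is that $\Phi_{\Po^\dagger}$ takes values in $\Dcal_{E/S}$-modules, so a morphism $\Ln^\dagger=\widehat{\Ocal_{\Inf^n_e E^\dagger}}^\dagger\to\widehat{\Fcal}^\dagger$ in the target is exactly an $S$-horizontal morphism; combined with the full faithfulness established in the proof of \Cref{prop_FMequiv}, this gives a natural identification $\Hom_{\VIC{E/S}}(\Ln^\dagger,\Gcal)\cong\Hom_{\Ocal_{E^\dagger}}(\Ocal_{\Inf^n_e E^\dagger},\Fcal)=\Gamma(S,(\pi_n)_*\Fcal)$. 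Running this argument after restriction along an arbitrary open $V\hookrightarrow S$ — using that the Poincar\'e bundle, the transform $\Phi_{\Po^\dagger}$, and the geometric logarithm sheaves are all compatible with base change — and gluing yields the isomorphism of $\Ocal_S$-modules $\pi_*\HOM^{\mathrm{hor}}_{E/S}(\Ln^\dagger,\Gcal)\cong(\pi_n)_*\Fcal$. Composing with the base-change isomorphism $e^*\widehat{\Fcal}^\dagger\cong(\pi_n)_*\Fcal$ of \Cref{DR_propFM} produces the desired map to $e^*\Gcal$.

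It then remains to verify two compatibilities. First, that under these identifications the resulting map is literally evaluation $f\mapsto(e^*f)(1)$ and that the section $1\in\Gamma(S,e^*\Ln^\dagger)$ is the image of $1\in\Gamma(S,(\pi_n)_*\Ocal_{\Inf^n_e E^\dagger})$ under \Cref{DR_propFM}; both follow from the naturality in $\Fcal$ of the base-change isomorphism of \Cref{DR_propFM}, applied to the morphism $\phi$ corresponding to $f$. Second, uniqueness up to unique isomorphism is the formal consequence of the universal property: two tuples co-representing the functor $\Gcal\mapsto e^*\Gcal$ are linked by a unique horizontal isomorphism respecting the marked sections. The hard part will be the passage from the \emph{global} isomorphism of homomorphism groups furnished by full faithfulness to the \emph{sheaf-level} isomorphism on $S$: this forces one to check that Laumon's transform and the equivalence of \Cref{prop_FMequiv} localize correctly over the base, so that the Hom-sheaf is computed by gluing local Hom-groups, and that this localization is strictly compatible with the base-change isomorphism of \Cref{DR_propFM} supplying the marked section.
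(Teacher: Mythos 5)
Your proposal is correct and takes essentially the same route as the paper's proof: both transport the tautological initial pair $(\Ocal_{E^\dagger}/\Jcal^{n+1},1)$ through Laumon's Fourier--Mukai equivalence (\Cref{prop_FMequiv}) and then use the identification $e^*\widehat{\Fcal}^\dagger\cong(\pi_n)_*\Fcal$ of \Cref{DR_propFM} to produce the evaluation isomorphism. The sheaf-level localization you single out as the hard part is precisely what the paper absorbs into its isomorphism $(A)$, working directly with Hom-sheaves and base-change compatibility of the transform rather than gluing global Hom-groups.
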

\begin{proof}
	 Let $\Gcal\in \Und{E/S}$. By the equivalence
	\[
		\widehat{(\cdot)}^\dagger:\Uninf\righteq \Und{E/S},
	\]
	we may assume $\Gcal=\hat{\Fcal}^\dagger$ for some $\Fcal\in \Uninf$. Then, we have the following chain of isomorphisms
	\begin{align*}
		\pi_*\HOM_{\Und{E/S}}\left( \widehat{\Ocal_{E^\dagger}/\Jcal^{n+1}}^\dagger,\Gcal  \right) & \stackrel{(A)}{\cong} (\pi_n)_*\HOM_{\Uninf}\left( \Ocal_{E^\dagger}/\Jcal^{n+1}, \Fcal \right)= \\
		&=(\pi_n)_*\Fcal \stackrel{(B)}{\cong}e^*\hat{\Fcal}^\dagger= e^*\Gcal
	\end{align*}
	where $(A)$ is induced by the Fourier--Mukai type equivalence \Cref{prop_FMequiv} and $(B)$ is \Cref{DR_propFM}. It is straightforward to check that this chain of isomorphisms sends $f$ to $(e^*f)(1)$. This proves the universal property of $(\Ln^\dagger,\con{\Ln^\dagger},1)$.
\end{proof}

As a corollary, we can deduce the following result about absolute connections:

\begin{cor}[{\cite[Cor. 2.3.2]{rene}}]\label{DR_lemdefLn}
	There exists a unique $\KK$-connection $\con{\Ln^\dagger}^{\text{abs}}$ on $\Ln^\dagger$, such that:
\begin{enumerate}
\item $\con{\Ln^\dagger}^{\text{abs}}$ restricts to the $S$-connection $\con{\Ln^\dagger}$, i.e.~$\res_S(\con{\Ln^\dagger}^{\text{abs}})=\con{\Ln^\dagger}$ and
\item $\left(\Ln^\dagger,\con{\Ln^\dagger}^{abs}, 1 \right)$ satisfies the universal property of the absolute $n$-th de Rham logarithm sheaf stated in \Cref{log_univ}.
\end{enumerate}	
\end{cor}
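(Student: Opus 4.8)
The plan is to produce the absolute connection by transporting the absolute connection $\con{\Logn}$ of the abstract logarithm sheaf along the isomorphism furnished by \Cref{DR_thmRene}. Write $\phi\colon(\Logn,\con{\Logn,E/S})\righteq(\Ln^\dagger,\con{\Ln^\dagger})$ for that horizontal isomorphism, normalized so that $e^*\phi$ sends $\idM^{(n)}$ to $1$. Since $\phi$ is an $\Ocal_E$-linear isomorphism, I set
\[
	\con{\Ln^\dagger}^{\text{abs}}:=(\phi\otimes\id_{\Omega^1_E})\circ\con{\Logn}\circ\phi^{-1},
\]
which is an integrable $\KK$-connection on $\Ln^\dagger$ making $\phi$ into an absolute-horizontal isomorphism $(\Logn,\con{\Logn})\righteq(\Ln^\dagger,\con{\Ln^\dagger}^{\text{abs}})$.

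Next I would verify the two required properties. For (a), restriction to relative differentials commutes with conjugation by $\phi$, so $\res_S(\con{\Ln^\dagger}^{\text{abs}})=(\phi\otimes\id)\circ\res_S(\con{\Logn})\circ\phi^{-1}=(\phi\otimes\id)\circ\con{\Logn,E/S}\circ\phi^{-1}$, and the relative horizontality of $\phi$ from \Cref{DR_thmRene} identifies this with $\con{\Ln^\dagger}$. For (b), the isomorphism $\phi$ now identifies the triples $(\Logn,\con{\Logn},\idM^{(n)})$ and $(\Ln^\dagger,\con{\Ln^\dagger}^{\text{abs}},1)$ in the category of absolutely unipotent objects equipped with a horizontal section along $e$; in particular $1$ is horizontal for the induced $\KK$-connection on $e^*\Ln^\dagger$. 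Since the universal property of \Cref{log_univ} says precisely that the source triple is initial in this category, and initiality is preserved under isomorphism, the target triple is initial as well, which is exactly property (b).

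Finally, for uniqueness I would argue as follows. Let $\nabla'$ be a second $\KK$-connection satisfying (a) and (b). Both $(\Ln^\dagger,\con{\Ln^\dagger}^{\text{abs}},1)$ and $(\Ln^\dagger,\nabla',1)$ are then initial objects, so there is a unique absolute-horizontal isomorphism $\psi$ between them respecting the marked section $1$. Restricting the horizontality of $\psi$ to relative differentials and using that both connections restrict to $\con{\Ln^\dagger}$ by (a), I see that $\psi$ is a relative-horizontal endomorphism of $(\Ln^\dagger,\con{\Ln^\dagger})$ with $e^*\psi(1)=1$. But \Cref{DR_thmUnivLog}, applied with $\Gcal=\Ln^\dagger$, asserts exactly that a relative-horizontal endomorphism is determined by the image of $1$ under $f\mapsto e^*f(1)$; since the identity also satisfies $e^*\id(1)=1$, I conclude $\psi=\id$, whence $\nabla'=\con{\Ln^\dagger}^{\text{abs}}$. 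The only genuinely delicate point is this reduction of absolute uniqueness to the relative universal property of \Cref{DR_thmUnivLog}: one must keep careful track of the difference between the absolute and relative horizontality conditions and exploit that the two candidate connections already agree relative to $S$, so that the comparison isomorphism lands in the relative category where \Cref{DR_thmUnivLog} can be invoked.
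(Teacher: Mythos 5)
Your proof is correct and takes essentially the same route as the paper: there, too, the absolute connection is defined by pulling back $\con{\Logn}$ along the unique $S$-horizontal, section-preserving isomorphism between $\Logn$ and $\Ln^\dagger$ (which is precisely the content of \Cref{DR_thmRene}, itself deduced from \Cref{log_univ} and \Cref{DR_thmUnivLog}), and properties (a) and (b) are checked exactly as you do. Your uniqueness argument---using (a) to place the comparison isomorphism $\psi$ in the relative category and then invoking \Cref{DR_thmUnivLog} with $\Gcal=\Ln^\dagger$ to force $\psi=\id$---is a careful spelling-out of the paper's terser assertion that uniqueness follows from the universal property.
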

\begin{proof} Uniqueness follows immediately from the universal property in $(b)$. For the existence of $\con{\Ln^\dagger}^{\text{abs}}$ let us consider $n$-th (absolute) de Rham logarithm sheaves $\left((\Logn,\con{\Logn}),\idM^{(n)} \right)$. The forgetful functor
\[
	\mathrm{res}_S\colon \Und{E/S/\KK}\rightarrow \Und{E/S}
\]
gives an object $\left(\Logn,\mathrm{res}_S(\con{\Logn}),\idM^{(n)} \right)$ in the category $\Und{E/S}$ satisfying the following universal property (by \Cref{log_univ}): For all $\Gcal\in \mathrm{U}^\dagger_n(E/S)$ the map
	\[
		\pi_* \HOM^{\mathrm{hor}}_{E/S}\left( \Logn ,\Gcal \right)\rightarrow e^*\Gcal,\quad f\mapsto (e^*f)(\idM^{(n)})
	\]
	is an isomorphism of $\Ocal_S$-modules. Since $\left( \left(\Ln^\dagger,\con{\Ln^\dagger} \right), 1 \right)$ satisfies the same universal property (see \Cref{DR_thmUnivLog}), there is a unique isomorphism
	\[
		\alpha:\Ln^\dagger \righteq \Logn
	\]
	which is $S$-horizontal, i.e.~$\alpha^*\mathrm{res}_S(\con{\Logn})=\con{\Ln^\dagger}$ and satisfies $(e^*\alpha)(1)=\idM^{(n)}$. Let us define $\con{\Ln^\dagger}^{\text{abs}}:=\alpha^* \con{\Logn}$. With this definition $\alpha$ provides an isomorphism
	\[
	(\Ln^\dagger,\con{\Ln^\dagger}^{\text{abs}},1)\righteq \left((\Logn,\con{\Logn}),\idM^{(n)} \right)
	\]
	in the category $\Und{E/S/K}$. In particular, the tuple $(\Ln^\dagger,\con{\Ln^\dagger}^{\text{abs}},1)$ satisfies the universal property of the $n$-th de Rham logarithm sheaf, i.e.~$(b)$ holds. $(a)$ follows from the formula
	\[
		\mathrm{res}_S(\con{\Ln^\dagger}^{\text{abs}})=\mathrm{res}_S(\alpha^*\con{\Logn})=\alpha^*\mathrm{res}_S(\con{\Logn})=\con{\Ln^\dagger}.
	\]
\end{proof}

\subsection{Extension classes and the Kodaira--Spencer map}
Simultaneously with the geometric logarithm sheaves $(\Ln^\dagger,\con{\Ln^\dagger})$, we introduced a variant $\Ln$ of geometric logarithm sheaves without a connection. One might ask about a similar universal property for $\Ln$. Indeed, let us define $U_n(E/S)$ as the full subcategory of the category of vector bundles consisting of unipotent objects of length $n$ for $E/S$, i.e. there is a $n$-step filtration with graded pieces of the form $\pi^*\Gcal$ for some vector bundle $\Gcal$ on $S$. The pullback $e^*\Ln=\Ocal_{\Inf^n_e E^\vee}$ is equipped with a distinguished section $1\in\Gamma(S,\Ocal_{\Inf^n_e E^\vee})$.

\begin{thm}\label{DR_thmExtLcal}
	The pair $(\Ln,1)$ is the unique pair, up to unique isomorphism, consisting of a unipotent vector bundle of length $n$ for $E/S$ and a section 
	$$1\in \Gamma(S,e^*\Ln)=\Gamma(S,\Ocal_{\Inf^n_e E^\vee})$$
	such that the following universal property holds: For all $\Ucal\in \mathrm{U}_n(E/S)$ the map
	\[
		\pi_* \HOM_{\Ocal_E}\left(\Ln,\Gcal \right)\rightarrow e^*\Gcal,\quad f\mapsto (e^*f)(1)
	\]
	is an isomorphism of $\Ocal_S$-modules.
\end{thm}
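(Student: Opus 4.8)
The plan is to run the proof of \Cref{DR_thmUnivLog} essentially verbatim, with Laumon's Fourier--Mukai transform $\Phi_{\Po^\dagger}$ on $\Dcal$-modules replaced by the classical (Mukai) Fourier--Mukai transform
\[
	\Phi_{\Po}\colon D^b_{qc}(\Ocal_{\Ed})\rightarrow D^b_{qc}(\Ocal_E),\quad \Fcal^\bullet\mapsto R\pr_{E,*}\left(\Po\otimes_{\Ocal_{E\times_S\Ed}}\pr_{\Ed}^*\Fcal^\bullet\right),
\]
with kernel the bi-rigidified Poincar\'e bundle $\Po$ on $E\times_S\Ed$. Since no connection is involved here, the whole argument becomes strictly simpler: one works with $\Ocal$-modules throughout, and the relevant morphism spaces are plain $\Ocal_E$-module homomorphisms rather than horizontal ones.

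First I would introduce the $\Ed$-analogue of $\Uninf$: writing $\Ical$ for the ideal sheaf of the unit section $e_{\Ed}\colon S\to\Ed$, let $\mathrm{U}_n(\Ocal_{\Ed})$ be the full subcategory of quasi-coherent $\Ocal_{\Ed}$-modules $\Fcal$ with $\Ical^{n+1}\Fcal=0$ and $\Ical^i\Fcal/\Ical^{i+1}\Fcal$ locally free over $\Ocal_S$. One then establishes the exact analogues of \Cref{lem_FMScheider}, \Cref{prop_FMequiv} and \Cref{DR_propFM}: (i) $\Phi_{\Po}((e_{\Ed})_*\Gcal)\cong\pi^*\Gcal$ for locally free $\Gcal$, using the rigidification $(\id_E\times e_{\Ed})^*\Po\cong\Ocal_E$; (ii) $\Phi_{\Po}(\Fcal)$ is concentrated in degree zero for $\Fcal\in\mathrm{U}_n(\Ocal_{\Ed})$, by the same d\'evissage over $n$; (iii) the functor $\widehat{(\cdot)}\colon\Fcal\mapsto H^0(\Phi_{\Po}(\Fcal))$ is an equivalence $\mathrm{U}_n(\Ocal_{\Ed})\righteq\mathrm{U}_n(E/S)$, using that $\Phi_{\Po}$ is an equivalence (Mukai) together with the same induction as in \Cref{prop_FMequiv}; (iv) $e^*\widehat{\Fcal}\cong(\pi_n)_*\Fcal$ by the same base-change computation, where $\pi_n\colon\Inf^n_e\Ed\to S$. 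Finally, exactly as for $\Ln^\dagger$, the defining formula gives $\widehat{\Ocal_{\Ed}/\Ical^{n+1}}=(\pr_E)_*(\id_E\times\iota_n)^*\Po\stackrel{\Def}{=}\Ln$, and the canonical section $1\in\Gamma(S,\Ocal_{\Inf^n_e\Ed})$ corresponds under (iv) to the marked section of $e^*\Ln$.

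With this machinery in place, the universal property is the chain of isomorphisms
\[
	\pi_*\Hom_{\Ocal_E}(\Ln,\Gcal)\cong(\pi_n)_*\Hom_{\Ocal_{\Ed}}(\Ocal_{\Ed}/\Ical^{n+1},\Fcal)=(\pi_n)_*\Fcal\cong e^*\widehat{\Fcal}=e^*\Gcal,
\]
where we write $\Gcal=\widehat{\Fcal}$ via (iii), the first isomorphism is full faithfulness of $\Phi_{\Po}$ (compatibly with the base $S$), and the last is (iv); one checks as before that the composite is $f\mapsto(e^*f)(1)$. Uniqueness up to unique isomorphism is then formal: the displayed isomorphism says precisely that $(\Ln,1)$ corepresents the functor $\Gcal\mapsto e^*\Gcal$ on $\mathrm{U}_n(E/S)$, equivalently that it is initial among pairs $(\Gcal,s)$ with $s\in\Gamma(S,e^*\Gcal)$, so Yoneda yields the claim.

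I expect the only genuine work to be setting up the classical Fourier--Mukai apparatus (i)--(iv). This is easier than its $\Dcal$-module counterpart in \Cref{lem_FMScheider}--\Cref{DR_propFM}, since there is no connection to propagate and Mukai's relative equivalence for $E/S$ replaces Laumon's theorem. The one point that needs a short separate remark rather than a pure copy of the $\dagger$-argument is local freeness of the transforms in (iii): in the setting with connection this was automatic, because coherent modules with integrable connection are vector bundles, whereas here it must be checked directly, for instance by noting that a finite successive extension of locally free $\Ocal_E$-modules is flat and finitely presented, hence locally free.
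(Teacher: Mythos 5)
Your proposal is exactly the paper's own proof: the paper disposes of this theorem in a single sentence, stating that the proof of \Cref{DR_thmUnivLog} goes through once Laumon's Fourier--Mukai transform is replaced by the classical Fourier--Mukai transform with kernel $\Po$ on $E\times_S E^\vee$. Your write-up is a faithful and correct elaboration of that same argument, spelling out the $\Ocal$-module analogues of \Cref{lem_FMScheider}, \Cref{prop_FMequiv} and \Cref{DR_propFM} that the substitution requires.
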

\begin{proof}
	The same proof as in \Cref{DR_thmUnivLog} works if one replaces the Fourier-Mukai transform of Laumon by the classical Fourier-Mukai transform.
\end{proof}
Let us recall that $\Lcal^1$ sits in a short exact sequence
		\begin{equation*}
			\begin{tikzcd}		
				\quad 0 \ar[r] & \pi^* \om_{\Ed/S} \ar[r] & \Lcal^1 \ar[r] & \Ocal_E \ar[r] & 0
			\end{tikzcd}		
		\end{equation*}
		and that the rigidification of the Poincar\'e bundle induces a trivializing isomorphism along the zero section $\triv_e\colon e^*\Lcal_1\righteq \Ocal_S\oplus \om_{\Ed/S}$. An immediate reformulation of the above universal property of $\Lcal^1$ is the following:
	\begin{cor}\label{GL_cor_pushout}
		Let $E/S$ be an elliptic curve, $M$ a locally free $\Ocal_S$-module of finite rank and let $(\Fcal,\sigma)$ be a pair consisting of an extension
		\begin{equation*}
			\begin{tikzcd}		
				\Fcal:\quad 0 \ar[r] & \pi^* M \ar[r] & F \ar[r] & \Ocal_E \ar[r] & 0
			\end{tikzcd}		
		\end{equation*}
		together with a splitting of $e^*\Fcal$, i.\,e. $\sigma$ is an isomorphism $e^*F\righteq \Ocal_S\oplus M$ which is compatible with the extension structure. Then, there is a unique morphism 
		\[
			\varphi: \om_{\Ed/S}\rightarrow M
		\]
		such that the pair $(\Fcal,\sigma)$ is the pushout of the pair $(\Lcal_1,\triv_e)$ along $\varphi$.
	\end{cor}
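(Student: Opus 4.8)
The plan is to deduce this purely formally from the universal property of $(\Lcal_1,1)$ proved in \Cref{DR_thmExtLcal}, applied with $n=1$. The key observation is that the middle term $F$ of the extension $\Fcal$ is itself a unipotent bundle of length one for $E/S$: it is an extension of $\Ocal_E=\pi^*\Ocal_S$ by $\pi^*M$, so $F\in\mathrm{U}_1(E/S)$ and the universal property applies to $\Gcal=F$. First I would read off from the splitting the section $s:=\sigma^{-1}(1\oplus 0)\in\Gamma(S,e^*F)$, i.e. the preimage of $1$ in the $\Ocal_S$-summand; it maps to $1$ under $e^*F\to e^*\Ocal_E=\Ocal_S$. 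The universal property then furnishes a unique $\Ocal_E$-linear map $f\colon\Lcal_1\to F$ with $(e^*f)(1)=s$.

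The next step is to verify that $f$ respects the extension structure. Writing $p\colon F\twoheadrightarrow\Ocal_E$ and $\tau\colon\Lcal_1\twoheadrightarrow\Ocal_E$ for the two projections, the composite $p\circ f$ satisfies $(e^*(p\circ f))(1)=(e^*p)(s)=1$, which is exactly the value $(e^*\tau)(1)=1$. Invoking the uniqueness part of \Cref{DR_thmExtLcal} for $\Gcal=\Ocal_E\in\mathrm{U}_0(E/S)$ gives $p\circ f=\tau$, so $f$ is compatible with the projections and hence maps $\pi^*\om_{\Ed/S}=\ker\tau$ into $\pi^*M=\ker p$.

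To produce $\varphi$ itself I would descend the restriction $f|_{\pi^*\om_{\Ed/S}}\colon\pi^*\om_{\Ed/S}\to\pi^*M$ to the base: since $\pi$ is proper with $\pi_*\Ocal_E=\Ocal_S$ and $\om_{\Ed/S}$ is locally free, the projection formula yields $\Hom_{\Ocal_E}(\pi^*\om_{\Ed/S},\pi^*M)\cong\Hom_{\Ocal_S}(\om_{\Ed/S},M)$, so $f|_{\pi^*\om_{\Ed/S}}=\pi^*\varphi$ for a unique $\varphi\colon\om_{\Ed/S}\to M$. The commutative ladder $(\pi^*\varphi,f,\id)$ of short exact sequences then exhibits $\Fcal$ as the pushout of $\Lcal_1$ along $\pi^*\varphi$ by the five lemma. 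For the splittings I would trace $e^*f$: the equalities $(e^*f)(1)=s$ and $e^*f|_{\om_{\Ed/S}}=\varphi$ give $\sigma\circ(e^*f)\circ\triv_e^{-1}=\id_{\Ocal_S}\oplus\varphi$, which is precisely the pushout of $\triv_e$ along $\varphi$; hence $(\Fcal,\sigma)$ is the pushout of $(\Lcal_1,\triv_e)$. Uniqueness of $\varphi$ follows since $\varphi$ is determined by $f$, and $f$ is uniquely determined by $s$, hence by $\sigma$.

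The statement is billed as an immediate reformulation, so no step is genuinely hard; the only point requiring care is the bookkeeping of the splitting data, namely checking that the section $s$ extracted from $\sigma$ is exactly the one that forces $e^*f$ to realize $\sigma$ (and not merely some abstract splitting) as the pushout of $\triv_e$. The descent isomorphism for $\Hom$-sheaves is routine given the properness of $\pi$ and $\pi_*\Ocal_E=\Ocal_S$.
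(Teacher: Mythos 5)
Your proposal is correct and follows essentially the same route as the paper: extract the section $s=\sigma^{-1}(1,0)$, invoke the universal property of $(\Lcal_1,1)$ from \Cref{DR_thmExtLcal} to get the unique $f\colon\Lcal_1\to F$, define $\varphi$ from $f$, and conclude the pushout and uniqueness statements. In fact you are slightly more careful than the paper at two points it leaves implicit — verifying $p\circ f=\tau$ by applying uniqueness with $\Gcal=\Ocal_E$, and descending $f|_{\pi^*\om_{\Ed/S}}$ to the base (the paper just sets $\varphi:=(e^*f)|_{\om_{\Ed/S}}$, which agrees with your $\varphi$ since $e^*\pi^*\varphi=\varphi$).
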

	\begin{proof}
		The pair $(\Fcal,\sigma)$ induces a pair $(F,s)$ with $F\in U_1(E/S)$ and $s:=\sigma^{-1}(1,0)\in\Gamma(S,e^*F)$. By the universal property of $\Lcal^1$, there is a unique morphism $f:\Lcal_1\rightarrow F$ which identifies $1\in \Gamma(S,e^*\Lcal_1)$ with $s\in \Gamma(S,e^*F)$. The pushout of
		\begin{equation*}
			\begin{tikzcd}		
				\Fcal:\quad 0 \ar[r] & \pi^* M \ar[r] & F \ar[r] & \Ocal_E \ar[r] & 0
			\end{tikzcd}		
		\end{equation*}		
		along $\varphi:=(e^*f)|_{\om_{E^\vee/S}}:\om_{\Ed/S}\rightarrow M$ is isomorphic to $(\Fcal,\sigma)$. Uniqueness follows from the rigidity of extensions with a fixed splitting along the pullback $e^*$.
	\end{proof}

An interesting application of this corollary relates $\Lcal_1$ to the absolute K\"ahler differentials of the universal elliptic curve. Let $E\rightarrow S$ be an elliptic curve and $S\rightarrow T$ be a smooth morphism. We have the following fundamental short exact sequences of K\"ahler differentials:
	\begin{equation}\label{KS_eq2}
	\begin{tikzcd}
		0 \ar[r] & \pi^* \Omega^1_{S/T} \ar[r] & \Omega^1_{E/T} \ar[r] & \Omega^1_{E/S} \ar[r] & 0
	\end{tikzcd}
	\end{equation}
	and
	\begin{equation*}
	\begin{tikzcd}
		0 \ar[r] & \Ical/\Ical^2 \ar[r] & e^*\Omega^1_{E/T} \ar[r] & \Omega^1_{S/T} \ar[r] & 0
	\end{tikzcd}
	\end{equation*}
	with $\Ical$ the ideal sheaf defining the zero section $e:S\rightarrow E$. The second short exact sequence induces a canonical splitting
	\[
		e^*\Omega^1_{E/T} \righteq \Omega^1_{S/T}\oplus e^*\Omega^1_{E/S}
	\]
	of the pullback of \eqref{KS_eq2} along $e$. The pullback of the Kodaira--Spencer map 
	\[
		\om_{E/S}\otimes \om_{\Ed/S}\rightarrow \Omega^1_{S/T}
	\]
	along $\pi:E\rightarrow S$ induces a map
	\begin{equation*}
		\mathrm{ks}: \Omega^1_{E/S}\otimes_{\Ocal_E} \pi^*\om_{\Ed/S}\rightarrow \pi^* \Omega^1_{S/T}.
	\end{equation*}
	\begin{cor}
		The short exact sequence of K\"ahler differentials:
	\begin{equation*}
	\begin{tikzcd}
		0 \ar[r] & \pi^* \Omega^1_{S/T} \ar[r] & \Omega^1_{E/T} \ar[r] & \Omega^1_{E/S} \ar[r] & 0
	\end{tikzcd}
	\end{equation*}
		is the pushout of $\Lcal^1\otimes_{\Ocal_E}\Omega^1_{E/S}$ along the Kodaira--Spencer map $\mathrm{ks}$.
	\end{cor}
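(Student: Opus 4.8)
The plan is to deduce this from \Cref{GL_cor_pushout} after untwisting by the relative differentials. Since $E/S$ is an elliptic curve, $\Omega^1_{E/S}$ is an invertible $\Ocal_E$-module, canonically isomorphic to the pullback $\pi^*\om_{E/S}$ of the Hodge bundle via translation-invariant differentials. Tensoring with the line bundle $(\Omega^1_{E/S})^\vee$ is therefore an exact auto-equivalence on short exact sequences that commutes with the formation of pushouts, so it suffices to prove the twisted-down statement. Concretely, tensoring the relative cotangent sequence with $(\Omega^1_{E/S})^\vee$ produces a short exact sequence
\[
	0\to\pi^* M\to\Omega^1_{E/T}\otimes_{\Ocal_E}(\Omega^1_{E/S})^\vee\to\Ocal_E\to0,
\]
where $M:=\Omega^1_{S/T}\otimes_{\Ocal_S}\om_{E/S}^\vee$ is locally free on $S$ (here I use $\pi^*\Omega^1_{S/T}\otimes(\Omega^1_{E/S})^\vee\cong\pi^* M$ and $e^*\Omega^1_{E/S}=\om_{E/S}$). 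It then remains to identify this extension, equipped with a suitable splitting along $e$, as the pushout of $(\Lcal_1,\triv_e)$ along the twisted-down Kodaira--Spencer map.

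To apply \Cref{GL_cor_pushout} I first produce the splitting along $e$. The second short exact sequence of K\"ahler differentials gives the canonical decomposition $e^*\Omega^1_{E/T}\righteq\Omega^1_{S/T}\oplus\om_{E/S}$; tensoring it with $\om_{E/S}^\vee$ and using $\om_{E/S}\otimes_{\Ocal_S}\om_{E/S}^\vee\cong\Ocal_S$ yields an isomorphism
\[
	\sigma\colon e^*\!\left(\Omega^1_{E/T}\otimes_{\Ocal_E}(\Omega^1_{E/S})^\vee\right)\righteq M\oplus\Ocal_S
\]
compatible with the extension structure of the twisted-down sequence, the quotient map to $\Ocal_S$ being the projection onto the $\om_{E/S}$-summand. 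The pair consisting of the twisted-down sequence together with $\sigma$ then satisfies the hypotheses of \Cref{GL_cor_pushout}, and I obtain a unique morphism $\varphi\colon\om_{\Ed/S}\to M$ exhibiting it as the pushout of $(\Lcal_1,\triv_e)$ along $\varphi$. By adjunction $\varphi$ corresponds to a map $\om_{E/S}\otimes_{\Ocal_S}\om_{\Ed/S}\to\Omega^1_{S/T}$.

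The hard part, and the step I expect to be the main obstacle, is to identify this classifying morphism $\varphi$ with the Kodaira--Spencer map. Unravelling the proof of \Cref{GL_cor_pushout}, $\varphi$ equals $(e^*f)|_{\om_{\Ed/S}}$ for the unique map $f\colon\Lcal_1\to\Omega^1_{E/T}\otimes_{\Ocal_E}(\Omega^1_{E/S})^\vee$ carrying the canonical section $1$ to $s=\sigma^{-1}(1,0)$; equivalently, by the universal property of $\Lcal_1$ (\Cref{DR_thmExtLcal}), $\varphi$ is the image of the extension class of the twisted-down cotangent sequence under the canonical identification of such extensions-with-splitting with $\Hom_{\Ocal_S}(\om_{\Ed/S},M)$. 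The remaining point is that this image agrees with $\mathrm{ks}$, which is the standard description of the Kodaira--Spencer map as the class of the relative cotangent sequence: pushing that class into $\Omega^1_{S/T}\otimes_{\Ocal_S}\om_{E/S}^\vee\otimes_{\Ocal_S}R^1\pi_*\Ocal_E$ and invoking the canonical identification $R^1\pi_*\Ocal_E\cong\om_{\Ed/S}^\vee$ recovers the defining map $\om_{E/S}\otimes_{\Ocal_S}\om_{\Ed/S}\to\Omega^1_{S/T}$. Granting this identification, $\varphi$ is the twisted-down Kodaira--Spencer map, and tensoring the resulting pushout presentation back up by the line bundle $\Omega^1_{E/S}$ yields the claim.
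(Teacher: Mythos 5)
Your proposal is correct and follows essentially the same route as the paper: apply \Cref{GL_cor_pushout} to the cotangent sequence twisted down by $(\Omega^1_{E/S})^\vee$ with its canonical splitting along $e$, and then identify the resulting classifying morphism with the Kodaira--Spencer map via the connecting homomorphism, which is exactly how the paper (citing the Chai--Faltings definition of $\mathrm{ks}$ as the image of $1$ under the connecting map of the twisted sequence) concludes. The identification you flag as ``the hard part'' and grant is treated at the same level of detail in the paper's own proof, which simply asserts that $\varphi$ equals the image of $1$ under $\delta\colon \Ocal_E\rightarrow R^1\pi_*(\pi^*M)=\Hom_{\Ocal_S}(\om_{\Ed/S},M)$, so your argument matches the paper in both structure and rigor.
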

	\begin{proof}
	Assume, we have a short exact sequence 
			\begin{equation*}
			\begin{tikzcd}		
				\Fcal:\quad 0 \ar[r] & \pi^* M \ar[r] & F \ar[r] & \Ocal_E \ar[r] & 0
			\end{tikzcd}		
		\end{equation*}
		together with a splitting $\sigma$ of $e^*\Fcal$ as in \Cref{GL_cor_pushout}. Then we can describe the unique map
		\[
			\varphi: \om_{\Ed/S}\rightarrow M
		\]
		whose existence is guaranteed by \Cref{GL_cor_pushout}, as the image of $1$ under the connecting morphism
		\[
			\delta: \Ocal_E\rightarrow R^1\pi_*(\pi^* M)=\Hom_{\Ocal_S}(\om_{\Ed/S},M).
		\]
		Now the result follows from the definition of the Kodaira--Spencer map \cite[p. 80]{chai_faltings}: The Kodaira--Spencer map is the image of $1$ under the connecting homomorphism of the short exact sequence
		\begin{equation}\label{ses_1}
		\begin{tikzcd}
			0 \ar[r] & \pi^* \Omega^1_{S/T}\otimes_{\Ocal_E} \left( \Omega^1_{E/S}\right)^\vee \ar[r] & \Omega^1_{E/T}\otimes_{\Ocal_E} \left( \Omega^1_{E/S}\right)^\vee \ar[r] & \Ocal_E \ar[r] & 0
		\end{tikzcd}			
		\end{equation}
		obtained by tensoring the short exact sequence of K\"ahler differentials with $\otimes_{\Ocal_E} \left( \Omega^1_{E/S}\right)^\vee$.
	\end{proof}
	In particular, the pushout along the Kodaira--Spencer map induces a map
	\begin{equation}\label{eq_KS}
		\KS\colon \Lcal_1\otimes_{\Ocal_E}\Omega^1_{E/S}\rightarrow \Omega^1_{E/T}.
	\end{equation}
	If $E/S$ is the universal elliptic curve over the modular curve (with some level structure) then $\KS$ is an isomorphism. In particular, $\Lcal_1$ is essentially given by the absolute K\"ahler differentials.
\section{The de Rham realization of the elliptic polylogarithm}
The aim of this section is to give an explicit algebraic description of the de Rham realization of the elliptic polylogarithm in terms of the Kronecker section of the Poincar\'{e} bundle for arbitrary families of elliptic curves $E/S$ over a smooth separated $\KK$-scheme $S$ of finite type. From now on, we will use Scheider's explicit description of the de Rham logarithm sheaves and fix $\left(\Ln^\dagger,\con{\Ln^\dagger}^{abs}, 1 \right)$ as an explicit model for the de Rham logarithm sheaves (cf.~\Cref{DR_lemdefLn}).\par

\subsection{The Kronecker section of the geometric logarithm sheaves}
Recall that the dual elliptic curve $E^\vee$ represents the connected component of the functor
\[
	T\mapsto \mathrm{Pic}(E_T/T):=\{ \text{iso. classes of rigidified line bundles } (\Lcal,r) \text{ on } E_T/T\}
\]
on the category of $S$-schemes. The polarization associated to the ample line bundle $\Ocal_E([e])$ gives us an explicit autoduality isomorphism
\begin{equation}\label{EP_eqAutodual}
\begin{tikzcd}[row sep=tiny]
	\lambda: E \ar[r] & \Pic=:\Ed\\
	P\ar[r,mapsto] & \left( \Ocal_E([-P]-[e])\otimes_{\Ocal_E} \pi^*e^*\Ocal_E([-P]-[e])^{-1}, \mathrm{can} \right)
\end{tikzcd}
\end{equation}
Here, $\mathrm{can}$ is the canonical rigidification given by the canonical isomorphism
\[
	e^*\Ocal_E([-P]-[e])\otimes_{\Ocal_S} e^*\Ocal_E([-P]-[e])^{-1} \righteq \Ocal_S.
\]
With this chosen identification of $E$ and $E^\vee$ we can describe the bi-rigidified Poincar\'e bundle as follows
\begin{align*}
	(\Po,r_0,s_0):&=\left( \pr_1^* \Ocal_E([e])^{\otimes-1}\otimes\pr_2^*\Ocal_{E}([e])^{\otimes-1}\otimes \mu^*\Ocal_E([e])\otimes \pi_{E\times E}^* \om_{E/S}^{\otimes -1} ,r_0,s_0 \right).
\end{align*}
Here, $\Delta=\ker \left(\mu:E\times \Ed\rightarrow E\right)$ is the anti-diagonal and $r_0,s_0$ are the rigifications induced by the canonical isomorphism 
\[
	e^*\Ocal_E(-[e])\righteq \om_{E/S}.
\]
This description of the Poincar\'{e} bundle gives the following isomorphisms of locally free $\Ocal_{E\times E}$-modules, i.\,e. all tensor products over $\Ocal_{E\times E}$:
\begin{align}\label{ch_EP_eq5}
	\notag \Po\otimes \Po^{\otimes -1} &= \Po \otimes \left( \Ocal_{E\times E}(-[e\times E]-[E\times e] + \Delta)\otimes \pi_{E\times E}^* \omega_{E/S}^{\otimes-1}\right)^{\otimes-1}\cong \\
	 &\cong \Po \otimes \Omega^1_{E\times E/E}([e\times E]+[E\times e]) \otimes \Ocal_{E\times E}( - \Delta)
\end{align}
The line bundle $\Ocal_{E\times E}( - \Delta)$ can be identified with the ideal sheaf $\Jcal_\Delta$ of the anti-diagonal $\Delta$ in $E\times_S E$ in a canonical way. If we combine the inclusion 
\[
	\Ocal_{E\times E}( - \Delta)\cong \Jcal_\Delta\hookrightarrow \Ocal_{E\times E}
\]
with \eqref{ch_EP_eq5}, we get a morphism of $\Ocal_{E\times E}$-modules
\begin{equation}\label{ch_EP_eq6}
	\Po\otimes \Po^{\otimes -1} \hookrightarrow \Po \otimes \Omega^1_{E\times E/E}([e\times E]+[E\times e]).
\end{equation}
Recall, that we agreed to identify $E$ with its dual with the canonical polarization associated to the line bundle $\Ocal_E([e])$. With this identification, the \emph{Kronecker section} 
	\[
		s_{\mathrm{can}}\in \Gamma\left(E\times_S E^\vee, \Po \otimes_{\Ocal_{E\times E^\vee}} \Omega^1_{E\times E^\vee/E^\vee}([e\times E^\vee]+[E\times e])\right)
	\]
is then defined as the image of the identity section $\id_{\Po}\in\Gamma(E\times E, \Po\otimes \Po^{\otimes -1})$ under \eqref{ch_EP_eq6}. The universal property of the Poincar\'e bundle gives us a canonical isomorphisms for $D>1$:
\[
	\gamma_{1,D}\colon (\id\times [D])^*\Pcal\righteq ([D]\times \id)^*\Pcal
\]
Let us define the \emph{$D$-variant of the Kronecker section} by
\[
			\scan^D:= D^2\cdot  \gamma_{1,D}\left( (\id \times [D])^*(\scan)\right)-([D]\times \id)^*(\scan).
\]
This is a priori an element in
\[
		\Gamma\left( E\times_S \Ed, ([D]\times \id)^*\left[ \Po\otimes \Omega^1_{E\times \Ed/\Ed}\left( [E\times \Ed[D]]+ [E[D]\times E^\vee]\right) \right] \right),
\]
but below, we will see that it is actually contained in
\[
			\Gamma\left( E\times_S \Ed, ([D]\times \id)^*\left[ \Po\otimes \Omega^1_{E\times \Ed/\Ed}\left( [E\times (\Ed[D]\setminus \{e\})]+ [E[D]\times E^\vee]		\right) \right] \right).
	\]
In other words, passing from the Kronecker section to its $D$-variant removes a pole along the divisor $E\times e$. For the proof, we will need the distribution relation of the Kronecker section: For a torsion section $t\in E[D](S)$, let us define the translation operator
\[
	\Ucal_t^D:= \gamma_{1,D}\circ(\id\times T_t)^*\gamma_{1,D}^{-1}\colon ([D]\times T_t)^*\Po\rightarrow ([D]\times\id)^*\Po.
\]
For a section $f\in \Gamma(E\times E^\vee,\Po\otimes_{\Ocal_{E\times E^\vee/E^\vee}}\Omega^1_{E\times E^\vee/E^\vee}(E\times e + e\times E]))$ let us introduce the notation
\[
	U_t^D(f):=(\Ucal_t^D\otimes \id_{\Omega^1})(([D]\times T_t)^*(f)).
\]
With this notation, the distribution relation of the Kronecker section reads in the convenient form:
\begin{prop}[Distribution Relation]
	For an elliptic curve $E/S$ with $D$ invertible on $S$ and $|E[D](S)|=D^2$ we have
	\[
		\sum_{e\neq t\in E[D](S)}U_t^D(\scan)=\scan^D.
	\]
\end{prop}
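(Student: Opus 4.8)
The plan is to first remove the term $t=e$ from the sum and thereby reduce the assertion to a cleaner \emph{full distribution relation}. Since $T_e=\id$ forces $\Ucal_e^D=\gamma_{1,D}\circ\gamma_{1,D}^{-1}=\id$, one has $U_e^D(\scan)=([D]\times\id)^*\scan$. Comparing with the definition $\scan^D=D^2\cdot\gamma_{1,D}((\id\times[D])^*\scan)-([D]\times\id)^*\scan$, the claimed identity is equivalent to
\[
	\sum_{t\in E[D](S)}U_t^D(\scan)=D^2\cdot\gamma_{1,D}\bigl((\id\times[D])^*\scan\bigr).
\]
Both sides are meromorphic sections of the same line bundle $([D]\times\id)^*\Po\otimes\Omega^1_{E\times\Ed/\Ed}$ on $E\times_S\Ed$, twisted so as to allow poles in the $E$-direction supported on $E[D]\times\Ed$; it therefore makes sense to compare them through their polar parts.

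Next I would invoke a rigidity principle to reduce the equality to a residue computation. Restricting to a fibre $E\times\{Q\}$ with $Q\notin\Ed[D]$, the line bundle $([D]\times\id)^*\Po$ restricts to a nontrivial degree-$0$ line bundle on $E$ (its class is $[D]^*\Po_Q$, which is nontrivial precisely because $Q\notin\Ed[D]$), and since $\Omega^1_E\cong\Ocal_E$ its $H^0$ after twisting by $\Omega^1_E$ vanishes. Hence such a relative $1$-form on $E$ is determined by its residues along its (finite) polar divisor. Consequently it suffices to show that the two sides have equal residues along $E[D]\times\Ed$; their difference then has vanishing residue at every $P_0\in E[D]$, lies in $H^0=0$ on a dense open set of fibres, and therefore vanishes identically.

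The heart of the argument is the residue computation. By construction $\scan$ is the $1$-form of the third kind with a simple pole of residue $1$, under the rigidification $r_0$, along $e\times\Ed$; since $[D]\colon E\to E$ is \'etale in characteristic zero (as $D$ is invertible on $S$), the pullback $([D]\times T_t)^*\scan$ has a simple pole of residue $1$ at each $P_0\in E[D]$. The only $t$-dependence enters through the comparison isomorphism $\Ucal_t^D=\gamma_{1,D}\circ(\id\times T_t)^*\gamma_{1,D}^{-1}$. Unwinding this over a torsion point $P_0\in E[D]$ and passing to the rigidified fibres $\Po_{(e,\,\cdot)}\cong\Ocal$, the operator $\Ucal_t^D$ scales the residue by the quantity measuring the failure of the two canonical rigidifications of $\Po$ to commute with translation by $t$; this scalar is exactly the value $\langle P_0,t\rangle$ of the commutator (Weil) pairing attached to the biextension $\Po$. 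Thus the residue of $U_t^D(\scan)$ at $P_0$ equals $\langle P_0,t\rangle$, and summing over the full group $E[D](S)$, which has order $D^2$ by hypothesis, orthogonality of characters gives $\sum_{t\in E[D](S)}\langle P_0,t\rangle=D^2$ for $P_0=e$ and $0$ otherwise. This matches the residues of the right-hand side $D^2\cdot\gamma_{1,D}((\id\times[D])^*\scan)$, which has residue $D^2$ at $P_0=e$ and no pole along $(E[D]\setminus\{e\})\times\Ed$, so the full distribution relation follows from the rigidity step. As a bonus, the vanishing of the residues away from the origin is precisely the assertion that $\scan^D$ acquires no pole along $E\times e$, which explains the improved codomain claimed in the text.

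The main obstacle is the precise identification, in the residue computation, of the scalar by which $\Ucal_t^D$ twists the residue with the Weil pairing $\langle P_0,t\rangle$. This requires carefully tracking the two rigidifications of the Poincar\'e bundle through the canonical isomorphisms $\gamma_{1,D}$ and their translates by $t$, and recognizing the resulting discrepancy as the commutator pairing of the biextension $\Po$; once this is established, the orthogonality relation for $E[D](S)$ delivers both the cancellation of the unwanted poles and the factor $D^2$ automatically.
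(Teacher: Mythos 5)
Your proof is correct in substance, but it is genuinely different from what the paper does: the paper contains no argument for this proposition at all, deferring entirely to \cite[Corollary A.3]{EisensteinPoincare}, so your residue-and-rigidity argument is a self-contained alternative rather than a reconstruction. The reduction to the full relation $\sum_{t\in E[D](S)}U^D_t(\scan)=D^2\cdot\gamma_{1,D}\bigl((\id\times[D])^*\scan\bigr)$ is correct, since $\Ucal^D_e=\id$; the rigidity step is sound, because on a fibre $E\times\{Q\}$ with $Q\notin\Ed[D]$ all poles in play are simple and $H^0\bigl(E,[D]^*\Po_Q\otimes\Omega^1_E\bigr)\cong H^0(E,\Po_{DQ})=0$; and the step you flag as the main obstacle is in fact a standard definition rather than an obstacle: restricting $\gamma_{1,D}$ to $\{P_0\}\times\Ed$ gives a trivialization $\phi$ of $[D]^*\bigl(\Po|_{\{P_0\}\times\Ed}\bigr)$, and by construction $\Ucal^D_t|_{\{P_0\}\times\Ed}=\phi\circ T_t^*\phi^{-1}$, which is precisely Mumford's description of the Weil pairing $\langle P_0,t\rangle^{\pm1}$ of $P_0$ and $t$ (the sign of the exponent is irrelevant since you sum over the full group). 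Granting the normalization $\Res_{e\times\Ed}(\scan)=1$ under $r_0$ --- the one input that must be extracted from the algebraic definition of $\scan$, and which is standard --- orthogonality of characters gives residue $D^2$ at $P_0=e$ (this is exactly where the hypothesis $|E[D](S)|=D^2$ enters) and $0$ at $P_0\neq e$, matching the right-hand side, whose only pole in the $E$-direction lies along $e\times\Ed$ with residue $D^2$. Two points should be tightened: (i) the concluding ``vanishes identically'' requires the total space to be reduced, since over a non-reduced base a section can vanish on every fibre without vanishing; this is automatic under the paper's standing assumptions ($S$ smooth over a field of characteristic zero) and in general follows by base change from the universal elliptic curve with level-$D$ structure, a reduction the paper itself uses repeatedly; (ii) both sides also have poles along $E\times\Ed[D]$, which your restriction to fibres with $Q\notin\Ed[D]$ correctly avoids, but which makes the phrase ``sections of the same line bundle twisted along $E[D]\times\Ed$'' slightly imprecise. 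What your route buys is a purely algebraic, conceptual proof in which the factor $D^2$ and the disappearance of the pole along $E\times e$ both flow from one source, the nondegeneracy of the Weil pairing; what the paper's citation buys is brevity and reliance on an already published computation.
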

\begin{proof}Let us refer to {\cite[Corollary A.3]{EisensteinPoincare}} for the proof of the distribution relation.
\end{proof}
The distribution relation shows, that passing to the $D$-variant of the Kronecker section removes a pole along the divisor $E\times e$:
\begin{cor}
	The $D$-variant of the Kronecker section is contained in
	\[
			\Gamma\left( E\times_S \Ed, ([D]\times \id)^*\left[ \Po\otimes \Omega^1_{E\times \Ed/\Ed}\left( [E\times (\Ed[D]\setminus \{e\})]+ [E[D]\times E^\vee]		\right) \right] \right).
	\]
\end{cor}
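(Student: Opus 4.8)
The plan is to deduce the stated containment from the distribution relation together with the a priori containment
\[
\scan^D \in \Gamma\left( E\times_S \Ed, ([D]\times \id)^*\left[ \Po\otimes \Omega^1_{E\times \Ed/\Ed}\left( [E\times \Ed[D]]+ [E[D]\times E^\vee] \right) \right] \right)
\]
recorded above. Since the two divisors differ only in the component $E\times e$ of $E\times \Ed[D]$, it suffices to prove that $\scan^D$ acquires no pole along $E\times e$. I would therefore use the identity $\scan^D=\sum_{e\neq t\in E[D](S)}U_t^D(\scan)$ and show that each individual summand is already regular along $E\times e$.

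Next I would analyze the pole locus of a single summand $U_t^D(\scan)=(\Ucal_t^D\otimes \id_{\Omega^1})\left(([D]\times T_t)^*\scan\right)$. The operator $\Ucal_t^D$ is an isomorphism of line bundles acting only on the Poincar\'e factor, so it neither creates nor cancels poles; hence the pole locus of $U_t^D(\scan)$ equals that of the pullback $([D]\times T_t)^*\scan$. Because $\scan$ has at most simple poles along $[e\times E^\vee]+[E\times e]$, because $[D]$ is \'etale (here $D$ is invertible on $S$), and because $T_t$ is an isomorphism, the pullback again has at most simple poles, now along the preimage of that divisor. I would also note that $T_t$ acts on the second factor only, while $\Omega^1_{E\times \Ed/\Ed}$ is pulled back from $\Omega^1_{E/S}$ along $\pr_E$; this is what makes the ambient bundle of each summand equal to $([D]\times \id)^*\left[\Po\otimes \Omega^1_{E\times \Ed/\Ed}(\,\cdot\,)\right]$, matching the target in the statement.

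The decisive computation is the preimage of the two polar divisors:
\[
([D]\times T_t)^{-1}\left([e\times E^\vee]\right)=[E[D]\times E^\vee], \qquad ([D]\times T_t)^{-1}\left([E\times e]\right)=[E\times \{-t\}],
\]
where under the chosen autoduality identification $T_t^{-1}(e)$ is the torsion point $-t$ of $\Ed[D]$. The crucial observation is that for $t\neq e$ one has $-t\neq e$, so each summand is regular along $E\times e$, and consequently so is the sum $\scan^D$. As $t$ runs through $E[D](S)\setminus\{e\}$ the points $-t$ run exactly through $\Ed[D]\setminus\{e\}$, so the total polar locus of the sum is contained in $[E[D]\times E^\vee]+[E\times(\Ed[D]\setminus\{e\})]$, which is precisely the asserted bundle.

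The argument is essentially bookkeeping, so the only real obstacle is keeping the translations and the autoduality identification straight: one must check that $T_t$ moves the pole of $\scan$ along $E\times e$ to a pole along $E\times\{-t\}$ with $-t\neq e$, and that the \'etaleness of $[D]$ prevents any increase in pole order. Once these points are verified, the containment follows immediately by comparison with the a priori bound.
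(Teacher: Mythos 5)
Your proof is correct and takes essentially the same route as the paper: both argue via the distribution relation that it suffices to bound the poles of each summand $U_t^D(\scan)$, whose polar locus is $[E[D]\times E^\vee]+[E\times(-t)]$ and hence avoids $E\times e$ because $-t\neq e$ for $t\neq e$. The only difference is that you spell out the bookkeeping (that $\Ucal_t^D$ is an isomorphism of line bundles, that $[D]$ is \'etale, and the computation of the preimage divisor under $[D]\times T_t$) which the paper compresses into ``by definition of the translation operators.''
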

\begin{proof}
By definition of the translation operators $U_t^D$, we get
\[
	U^D_t(s_{\mathrm{can}})\in \Gamma\left(E \times_S \Ed, ([D]\times\id)^*\left(\Po\otimes\Omega^1_{E \times_S \Ed/\Ed}\left([E\times{(-t)}]+[{e}\times \Ed] \right)\right)\right).
\]
Now the result follows from the distribution relation.
\end{proof}

The rigidification $(\id\times e)^*\Po\cong \Ocal_E$ of the Poincar\'e bundle gives us the identification:
\[
	([D]\times e)^*\left(\Po\otimes\Omega^1_{E \times_S \Ed/\Ed}\left([E\times{(-t)}]+[{e}\times \Ed] \right)\right)\cong [D]^*\Omega^1_{E/S}([e])\cong\Omega^1_{E/S}(E[D]).
\]
Using this identification, we get $(\id\times e)^*\scan\in \Gamma(E,\Omega^1_{E/S}(E[D]))$. This allows us to identify the $1$-form $(\id\times e)^*\scan$ with the logarithmic derivative of the Kato--Siegel function, which has been introduced by Kato in \cite[Proposition 1.3]{kato}. Let us briefly recall the definition of the Kato--Siegel function:
\begin{prop}[{\cite[Proposition 1.3]{kato}}]
	Let $E$ be an elliptic curve over a base scheme $S$ and $D$ be an integer which is prime to $6$. There exists a unique section $\thetaD\in \Gamma(E\setminus E[D],\Ocal_E^\times)$ satisfying the following conditions:
	\begin{enumerate}
	\item $\thetaD$ has divisor $D^2[e]-[E[D]]$,
	\item $\mathrm{N}_M(\thetaD)=\thetaD$ for every integer $M$ which is prime to $D$. Here $\mathrm{N}_M\colon \Gamma(E\setminus E[MD],\Ocal_E)\rightarrow \Gamma(E\setminus E[D],\Ocal_E)$ denotes the norm map along $M$.
	\end{enumerate}
	The function $\thetaD$ is called \emph{Kato--Siegel} function.
\end{prop}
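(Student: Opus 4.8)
The plan is to treat the two requirements of the proposition separately: condition~(i) determines $\thetaD$ up to a factor in $\Gamma(S,\Ocal_S^\times)$, while condition~(ii) rigidifies this ambiguity away. Throughout I would use that $E/S$ is a proper curve with $\pi_*\Ocal_E=\Ocal_S$, so that every nowhere-vanishing function on $E$ is pulled back from $S$.

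\textbf{Uniqueness.} If $\theta$ and $\theta'$ both satisfy (i) and (ii), they have the same divisor, so $\theta/\theta'$ extends to a nowhere-vanishing function on $E$ and hence equals $\pi^*f$ for a unique $f\in\Gamma(S,\Ocal_S^\times)$. The norm along $[M]$ sends such a constant to its $\deg[M]=M^2$ power, so (ii) forces $f^{M^2-1}=1$ for every $M$ prime to $D$. This is exactly where the hypothesis $\gcd(D,6)=1$ enters: taking $M=2$ and $M=3$ (both prime to $D$) gives $f^{3}=f^{8}=1$, whence $f=1$ since $\gcd(3,8)=1$.

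\textbf{Existence and the divisor condition.} First I would record that the divisor $\mathfrak d:=D^2[e]-[E[D]]$ is principal: it has degree $0$, and under the Abel--Jacobi isomorphism its class $D^2\cdot e\ominus\sum_{P\in E[D]}P$ is trivial, since $\sum_{P\in E[D]}P=e$ because $D$ is odd (the nonzero $D$-torsion cancels in pairs $\{P,-P\}$). To obtain a \emph{canonical} generator rather than one defined up to scalars, set $L:=\Ocal_E([e])$ with its tautological section $1_L$ of divisor $[e]$. As $L$ is symmetric, the theorem of the cube furnishes an isomorphism $\psi_D\colon[D]^*L\righteq L^{\otimes D^2}$, which I normalize to be the identity on $e^*L$ via the rigidification $e^*L\cong\omega_{E/S}^{\otimes-1}$. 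Then
\[
	\thetaD:=\frac{(1_L)^{\otimes D^2}}{\psi_D\bigl([D]^*1_L\bigr)}
\]
is a well-defined global section: the numerator has divisor $D^2[e]$ and the denominator has divisor $[D]^*[e]=[E[D]]$, so $\thetaD$ is a unit on $E\setminus E[D]$ with divisor $\mathfrak d$, giving~(i). The construction is manifestly compatible with base change.

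\textbf{The norm relation.} This I expect to be the main obstacle. Since $[M]$ permutes $E[D]$ bijectively when $\gcd(M,D)=1$, one has $[M]_*\mathfrak d=\mathfrak d$, so $N_M(\thetaD)/\thetaD$ is already known to be a constant in $\Gamma(S,\Ocal_S^\times)$; the real content is to show it equals $1$. I would deduce this from the cocycle compatibility $\psi_{MD}=\psi_M^{\otimes D^2}\circ[M]^*\psi_D$ of the normalized theorem-of-the-cube isomorphisms under $[MD]=[M]\circ[D]$, together with the definition of the norm along $[M]$; tracing the canonical sections $1_L$ through these identifications reduces the desired equality $N_M(\thetaD)=\thetaD$ to a comparison of leading coefficients at $e$, which the normalization of $\psi_D$ makes explicit. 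By base-change compatibility it suffices to verify this for the universal elliptic curve, where the normalizing constant may alternatively be read off from the classical distribution relation for Siegel units. Combined with the uniqueness above, this exhibits $\thetaD$ as the Kato--Siegel function.
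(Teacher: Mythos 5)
First, a point of reference: the paper does not prove this proposition at all --- it is quoted verbatim from Kato (\cite{kato}, Prop.~1.3) --- so your attempt has to be measured against Kato's argument. Your uniqueness paragraph is correct and is exactly the standard one. The genuine gap is in the existence step. The isomorphism $\psi_D\colon [D]^*L\righteq L^{\otimes D^2}$, ``normalized to be the identity on $e^*L$'', does not exist over a general base: since $[D]\circ e=e$, pulling back the left side along $e$ gives $e^*[D]^*L\cong e^*L\cong\omega_{E/S}^{\otimes -1}$, whereas the right side gives $e^*(L^{\otimes D^2})\cong\omega_{E/S}^{\otimes -D^2}$. These are different line bundles on $S$, so the proposed normalization is not even well posed; what the theorem of the cube plus seesaw actually give is a canonical isomorphism $[D]^*L\cong L^{\otimes D^2}\otimes\pi^*\omega_{E/S}^{\otimes (D^2-1)}$, equivalently $\Ocal_E(D^2[e]-E[D])\cong\pi^*\omega_{E/S}^{\otimes -(D^2-1)}$. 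For the same reason your Abel--Jacobi argument only shows that $\mathfrak d$ is principal on every geometric fibre; over $S$ it is principal if and only if $\omega_{E/S}^{\otimes(D^2-1)}$ is trivial. This is not a technicality one can wave away: your existence step uses only that $D$ is odd, so it would apply to $D=3$ and, being (as you say) canonical and compatible with base change, would descend to a trivialization of $\omega^{\otimes 8}$ on the moduli stack of elliptic curves over $\QQ$ --- impossible, since the Picard group of that stack is $\ZZ/12\ZZ$ generated by $\omega$. So for $D=3$ no function with divisor $9[e]-E[3]$ exists for the universal curve, and the construction as written cannot be correct.

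The missing idea is precisely where ``prime to $6$'' enters \emph{existence}, not just uniqueness: $\gcd(D,6)=1$ forces $D^2\equiv 1\pmod{24}$, in particular $12\mid D^2-1$, and $\omega_{E/S}^{\otimes 12}$ is canonically trivialized by the discriminant $\Delta$ for \emph{any} elliptic curve over any base. Multiplying your ratio $(1_L)^{\otimes D^2}/[D]^*1_L$ (a rational section of $\pi^*\omega_{E/S}^{\otimes-(D^2-1)}$ with divisor $\mathfrak d$) by $\pi^*\Delta^{(D^2-1)/12}$ produces an honest rational function with divisor $\mathfrak d$, canonically and compatibly with base change; one must then still verify the norm relation, e.g.\ by reducing to the universal curve (your reduction step is fine) and computing with the Tate curve or with Siegel units. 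Alternatively, Kato's own route avoids $\Delta$: Zariski-locally on $S$ the sheaf $\omega_{E/S}$ is free, so $\mathfrak d$ is locally principal; if $f$ is a local generator and $N_M(f)=c_Mf$, the commutation $N_2N_3=N_3N_2=N_6$ yields $c_2^{8}=c_3^{3}$, and replacing $f$ by $c_2^{-3}c_3\cdot f$ gives the unique local function satisfying \emph{all} norm relations (for general $M$ prime to $D$ one gets $c_M^{3}=c_M^{8}=1$); these local solutions then glue by uniqueness. Either repair works, but note also that your norm-relation paragraph is only a plan --- the ``cocycle compatibility'' of the $\psi$'s and the ``comparison of leading coefficients'' are never carried out --- and it would in any case have to be rebuilt on top of the corrected, $\Delta$-twisted construction.
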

The Kato--Siegel functions play an important role in modern number theory. While their values at torsion points are elliptic units, the logarithmic derivatives $d \log \thetaD\in\Gamma(E,\Ocal_E(E[D]))$ are closely related to algebraic Eisenstein series. These two properties make them an important tool for proving explicit reciprocity laws for twists of Tate modules of elliptic curves, see for example \cite{kato_IwasawaBdR} and \cite{tsuji}. Let us remark, that the logarithmic derivatives of the Kato--Siegel functions appear as specializations of the Kronecker section:
\begin{prop}[{\cite[Cor. 5.7]{EisensteinPoincare}}]\label{prop_KatoSiegel}
	The section
	\[
		(\id\times e)^*\scan^D \in \Gamma(E,\Omega^1_{E/S}(E[D]))
	\]
	coincides with the logarithmic derivative of the Kato--Siegel function, i.e.
	\[
		(\id\times e)^*\scan^D=d\log \thetaD.
	\]	
\end{prop}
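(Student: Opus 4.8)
The plan is to identify both sides as logarithmic one-forms with the same polar divisor. Since the relative logarithmic derivative determines a rational function up to a multiplicative constant in $\Gamma(S,\Ocal_S^\times)$, this is enough to deduce the asserted equality of one-forms. Concretely, I would show that $(\id\times e)^*\scan^D=d\log g_D$ for a rational function $g_D$ on $E$ which is a unit on $E\setminus E[D]$ and has divisor $D^2[e]-[E[D]]=\mathrm{div}(\thetaD)$; then $g_D/\thetaD$ is a nowhere-vanishing regular function on $E$, hence lies in $\Gamma(S,\Ocal_S^\times)$, so $g_D=c\cdot\thetaD$ and applying $d\log$ gives the claim. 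Note that along this route the norm-compatibility of the Kato--Siegel function is not needed: the characterization by the divisor alone suffices for the equality of one-forms.

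First I would check that $(\id\times e)^*\scan^D$ is genuinely logarithmic. This is essentially built into the construction of the Kronecker section: by \eqref{ch_EP_eq5}--\eqref{ch_EP_eq6}, $\scan$ is the image of the identity section $\id_{\Po}$ under the map induced by the inclusion $\Jcal_\Delta\hookrightarrow\Ocal_{E\times\Ed}$ of the ideal sheaf of the anti-diagonal, which is precisely the Poincar\'e-residue (i.e.\ $d\log$) construction. Hence $\scan$ is a form of the third kind, and this property is inherited by the pullbacks $(\id\times[D])^*$, $([D]\times\id)^*$ and by the comparison isomorphisms defining $\scan^D$; in particular $(\id\times e)^*\scan^D$ is closed with integral residues and is the logarithmic derivative of a rational function $g_D$.

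The technical heart is the computation of $\mathrm{div}(g_D)$, i.e.\ the residues of $(\id\times e)^*\scan^D$. Here I would invoke the distribution relation $\scan^D=\sum_{e\neq t\in E[D]}U_t^D(\scan)$, which reduces the problem to the slices $(\id\times t)^*\scan$. For $t\neq e$ the base-direction pole of $\scan$ along $E\times e$ does not meet the fibre over $t$, so $(\id\times t)^*\scan$ is a well-defined logarithmic form on $E$ valued in the degree-zero line bundle $\Po|_{E\times\{t\}}$, with a single simple pole at $e$ of residue $1$ (by the normalization of $\scan$) coming from the pole along $e\times\Ed$. Using the identity $([D]\times\id)\circ(\id\times e)=(\id\times e)\circ[D]$, each summand restricts as $(\id\times e)^*U_t^D(\scan)=\Ucal_t^D\cdot[D]^*\big[(\id\times t)^*\scan\big]$, with the comparison isomorphism $\Ucal_t^D$ trivializing the line bundle by means of the rigidification.

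The hard part is keeping track of this trivialization. The form $[D]^*\big[(\id\times t)^*\scan\big]$ has simple poles along $[D]^{-1}(e)=E[D]$, but it is valued in a nontrivial bundle, and converting it into an honest one-form via $\Ucal_t^D$ (and $\gamma_{1,D}$ together with the rigidifications) modifies it by the logarithmic derivative of the comparison factor. It is exactly this correction term that rebalances the residues so that they sum to zero, as the residue theorem demands, and, after summing over the $D^2-1$ torsion points $t\neq e$, produces residue $D^2-1$ at $e$ and $-1$ at each $t\in E[D]\setminus\{e\}$. Carrying out this bookkeeping carefully --- identifying the $d\log$ of the comparison isomorphisms with the elementary forms attached to the points of $E[D]$ --- yields $\mathrm{div}(g_D)=D^2[e]-[E[D]]$ and completes the proof. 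This is the step I expect to demand the most care, and the one where the explicit normalization of $\scan$ recorded in \cite{EisensteinPoincare} is indispensable.
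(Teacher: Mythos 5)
Your strategy differs from the paper's proof, which is a two-line affair: the identity $d\log \thetaD=\sum_{e\neq t\in E[D](S)}(\id\times e)^*\bigl(U_t^D(\scan)\bigr)$ is quoted from \cite[Cor.~5.7]{EisensteinPoincare} and combined with the distribution relation. Your attempt to re-derive the statement intrinsically has a genuine gap, and it sits exactly in the step you dismiss as ``essentially built into the construction.'' On a curve of genus one, the implication ``closed with integral residues $\Rightarrow$ logarithmic derivative of a rational function'' is \emph{false}: since $\Gamma(E,\Omega^1_{E/S})=\Gamma(S,\om_{E/S})\neq 0$, a relative $1$-form with simple poles, integer residues and principal residue divisor is a $d\log$ only up to adding a differential of the first kind; $d\log g+\omega$ has the same residues as $d\log g$ for any $\omega\in\Gamma(S,\om_{E/S})$ but is not logarithmic when $\omega\neq 0$. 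So even granting your residue bookkeeping in full, your argument only shows that $(\id\times e)^*\scan^D$ and $d\log\thetaD$ differ by a global section of $\om_{E/S}$, and proving that this discrepancy vanishes is precisely the non-trivial content of the proposition. The construction of $\scan$ cannot supply this for free: the map \eqref{ch_EP_eq6} induced by $\Jcal_\Delta\hookrightarrow\Ocal_{E\times E}$ is $\Ocal_{E\times E}$-linear (no differentiation occurs anywhere), and the slices $(\id\times t)^*\scan$ for $t\neq e$ are not $1$-forms at all but sections of $\Po|_{E\times\{t\}}\otimes\Omega^1_{E/S}(\cdots)$ with $\Po|_{E\times\{t\}}$ a non-trivial degree-zero bundle, so ``form of the third kind'' is not even defined for them --- indeed their unique pole at $e$ has residue $1$, which no honest $1$-form on a proper curve could have.

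To close the gap you need an extra rigidity input that kills the holomorphic ambiguity --- ironically, exactly the kind of datum you declared unnecessary. Two ways to do this: (i) a symmetry argument: establish that $\scan$ (hence $\scan^D$) is invariant under inversion in both factors, so that $(\id\times e)^*\scan^D$ is $[-1]^*$-invariant, as is $d\log\thetaD$ (its divisor is $[-1]$-stable), whereas every $\omega\in\Gamma(S,\om_{E/S})$ satisfies $[-1]^*\omega=-\omega$; the discrepancy is then both invariant and anti-invariant, hence zero; or (ii) the route of the cited \cite[Cor.~5.7]{EisensteinPoincare}, which in effect carries out the computation (visible analytically in the coordinates of \Cref{sec_proof}) identifying $(\id\times e)^*\scan^D$ with $d\log\bigl(\vartheta(z)^{D^2}/\vartheta(Dz)\bigr)$ on the universal curve. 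With either ingredient added, your divisor argument does correctly finish the proof; without one, it proves only equality of residues, not equality of forms.
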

\begin{proof}
In {\cite[Cor. 5.7]{EisensteinPoincare}} we have proven
\[
	d\log \thetaD= \sum_{e\neq t\in E[D](S)}(\id\times e)^*(U_t^D(\scan)).
\]
Now, the result follows from the distribution relation.
\end{proof}
By restricting $\scan^D$ along $E\times \Inf^n_e E^\vee$ we obtain a compatible system of $1$-forms with values in the logarithm sheaves, more precisely: The canonical isomorphism $[D]^*\Omega^1_{E/S}([e])\righteq \Omega^1_{E/S}(E[D])$ tensored with the invariance under isogenies isomorphism (cf. \eqref{eq_invariance}) gives
		\begin{equation}\label{GL_eq8}
			[D]^*\left(\Ln \otimes_{\Ocal_E} \Omega^1_{E/S}\left( [e] \right)\right)\righteq \Ln\otimes_{\Ocal_E} \Omega^1_{E/S}\left( E[D] \right).
		\end{equation}
\begin{defin}
	Define the \emph{$n$-th infinitesimal Kronecker section}
		\[
			\lnD\in  \Gamma\left(E,\Ln \otimes_{\Ocal_E} \Omega^1_{E/S}\left( E[D] \right)\right)
		\]
		as the image of $(\pr_E)_*(\id\times \iota_n)^*(\scan^D)\in \Gamma\left(E,[D]^*\left[\Ln \otimes_{\Ocal_E} \Omega^1_{E/S}\left( [e] \right)\right]\right)$ under \eqref{GL_eq8}.
\end{defin}
By construction the $\lnD$ form a compatible system of sections with respect to the transition maps of the geometric logarithm sheaves. By abuse of notation, we will denote the image of $\lnD$ under the canonical inclusion
\[
	\Lcal_n \hookrightarrow \Lcal^\dagger_n 
\]
again by $\lnD$.

\subsection{Lifting the infinitesimal Kronecker sections}
By the results of Scheider, we already have a good explicit model of the de Rham logarithm sheaves in terms of the Poincar\'e bundle. Our next aim is to give an explicit description of the elliptic de Rham polylogarithm in terms of the Poincar\'e bundle. Therefore, we would like to lift the infinitesimal Kronecker sections
\[
	\lnD\in \Gamma\left( E, \Ln\otimes_{\Ocal_E} \Omega^1_{E/S}(E[D]) \right)\subseteq \Gamma\left( E, \Ln^\dagger\otimes_{\Ocal_E} \Omega^1_{E/S}(E[D]) \right)
\]
to absolute $1$-forms. In \cref{eq_KS} we have used the Kodaira-Spence map to construct a map
\[
	\KS\colon\Lcal^1\otimes_{\Ocal_E}\Omega^1_{E/S}\rightarrow \Omega^1_{E/\KK}.
\]
Further, by the universal property of $\Lcal_n$ we have comultiplication maps $\Lcal_{k+l}\rightarrow \Lcal_k\otimes \Lcal_l$ mapping $1$ to $1\otimes 1$ after $e^*$. If we combine the Kodaira--Spencer map with the comultiplication of the geometric logarithm sheaves, we obtain a map
\begin{equation}\label{eq_liftingmap}
	\Lcal_{n+1}\otimes\Omega^1_{E/S}\rightarrow \Lcal_{n}\otimes\Lcal_1\otimes\Omega^1_{E/S} \rightarrow \Lcal_{n}\otimes \Omega^1_{E/\KK}.
\end{equation}
lifting relative $1$-forms to absolute $1$-forms.
\begin{defin}\label{DR_Pol_defLnD}
	Let us define the $n$-th \emph{absolute infinitesimal Kronecker section} $$\LnD\in\Gamma(E,\Ln\otimes\Omega^1_{E/S}(E[D]))$$ as the image of $l_{n+1}^D$ under the lifting map \eqref{eq_liftingmap}.
\end{defin}
A first step in proving that the absolute infinitesimal Kronecker sections represent the polylogarithm is the following:
\begin{prop}\label{DR_liftingLnD}
	Let us view $\LnD$ as section of $\Ln^\dagger|_{U_D}\otimes_{\Ocal_{U_D}}\Omega^1_{U_D/\KK}$ via the inclusion $\Ln\hookrightarrow \Ln^\dagger$. Then
	\[
		\LnD\in \Gamma\left( U_D, \ker\left( \Ln^\dagger\otimes_{\Ocal_{E}}\Omega^1_{E/\KK}\stackrel{ \dd^{(1)} }{\longrightarrow} \Ln^\dagger\otimes_{\Ocal_{E}}\Omega^2_{E/\KK} \right) \right)
	\]
	where $ \dd^{(1)}$ is the second differential in the absolute de Rham complex of $(\Ln^\dagger,\con{\Ln^\dagger,abs})$.
\end{prop}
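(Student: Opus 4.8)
The plan is to reduce the absolute closedness of $\LnD$ to the flatness of the Poincar\'e connection $\con{\Po^\dagger}$, transported through the pushforward defining $\Ln^\dagger$ and through the Kodaira--Spencer lifting \eqref{eq_liftingmap}. First I would unwind the differential: $\dd^{(1)}(s\otimes\omega)=\con{\Ln^\dagger}^{\mathrm{abs}}(s)\wedge\omega+s\otimes d\omega$. Filtering $\Omega^2_{E/\KK}$ by the Koszul filtration gives a short exact sequence $0\to\pi^*\Omega^2_{S/\KK}\to\Omega^2_{E/\KK}\to\pi^*\Omega^1_{S/\KK}\otimes_{\Ocal_E}\Omega^1_{E/S}\to0$, the missing graded piece $\Omega^2_{E/S}$ being zero since $E/S$ is a relative curve. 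Consequently the component of $\dd^{(1)}\LnD$ detecting the relative direction vanishes for free, and the claim reduces to the vanishing of the mixed component in $\Ln^\dagger\otimes\pi^*\Omega^1_{S/\KK}\otimes\Omega^1_{E/S}$ and of the base component in $\Ln^\dagger\otimes\pi^*\Omega^2_{S/\KK}$.

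The main point is to make $\con{\Ln^\dagger}^{\mathrm{abs}}$ explicit enough to evaluate these two components. By \Cref{DR_Pol_defLnD}, $\LnD$ is the image of $l_{n+1}^D$ under the comultiplication $\Lcal_{n+1}\to\Lcal_n\otimes\Lcal_1$ followed by $\KS\colon\Lcal_1\otimes\Omega^1_{E/S}\to\Omega^1_{E/\KK}$ from \eqref{eq_KS}. The Kodaira--Spencer map is precisely the device converting the $\pi^*\om_{E^\vee/S}$-part of $\Lcal_1$ into the horizontal summand $\pi^*\Omega^1_{S/\KK}$ of $\Omega^1_{E/\KK}$; I would show that, after restriction to $\Inf^n_e E^\dagger$ and pushforward along $\pr_E$, the comultiplication composed with $\KS$ reproduces exactly the horizontal part of $\con{\Ln^\dagger}^{\mathrm{abs}}$, so that $\dd^{(1)}\LnD$ is computed by a single application of the integrable connection $\con{\Po^\dagger}$ to the restriction of $\scan^D$. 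Here \Cref{DR_lemdefLn}, which identifies $(\Ln^\dagger,\con{\Ln^\dagger}^{\mathrm{abs}})$ with the abstract logarithm sheaf, must be leveraged to pin down the horizontal direction of the connection.

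With this dictionary in place I would invoke the structure of the Kronecker section itself. By construction $\scan$ is the image of the identity section $\id_\Po$ under \eqref{ch_EP_eq6}, i.e.\ a logarithmic derivative; coupled with the flat connection $\con{\Po^\dagger}$ it is a de Rham cocycle, and the integrability of $\con{\Po^\dagger}$ forces both the mixed and the base components above to vanish. Since the $D$-variant has no pole along $E\times e$ (this is the corollary following the distribution relation), over $U_D$ the entire computation takes place in the honest absolute de Rham complex of $(\Ln^\dagger,\con{\Ln^\dagger}^{\mathrm{abs}})$ with poles confined to $E[D]$, so no boundary term is lost and $\dd^{(1)}\LnD=0$ follows.

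The hard part will be the second step. The absolute connection $\con{\Ln^\dagger}^{\mathrm{abs}}$ is only characterised abstractly through the universal property of \Cref{log_univ} and \Cref{DR_lemdefLn}, so matching its horizontal direction with the explicit Kodaira--Spencer lifting and the Poincar\'e connection is the genuine crux; all the formal bookkeeping with the Koszul filtration is subordinate to it. In particular, the vanishing of the base component in $\Ln^\dagger\otimes\pi^*\Omega^2_{S/\KK}$ cannot be obtained by a dimension count in general and relies essentially on the flatness of $\con{\Po^\dagger}$, which is the technical heart of the argument.
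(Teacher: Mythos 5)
There is a genuine gap, and it sits exactly where you locate the ``technical heart'' of your argument. The flatness of $\con{\Po^\dagger}$ cannot be the source of the vanishing, because it is vacuous: $\con{\Po^\dagger}$ is an integrable connection \emph{relative to} $E^\dagger$, and $E\times_S E^\dagger\rightarrow E^\dagger$ has relative dimension one, so $\Omega^2_{E\times_S E^\dagger/E^\dagger}=0$ and every $E^\dagger$-connection there is automatically integrable. In particular $\con{\Po^\dagger}$ only differentiates along $E$; it induces the relative connection $\con{\Ln^\dagger}$, but it carries no information whatsoever about the horizontal ($S$-)direction of $\con{\Ln^\dagger}^{\mathrm{abs}}$, which is precisely what enters the two components you still need to kill, namely the mixed component in $\Ln^\dagger\otimes\pi^*\Omega^1_{S/\KK}\otimes\Omega^1_{E/S}$ and the base component in $\Ln^\dagger\otimes\pi^*\Omega^2_{S/\KK}$. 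The absolute connection is only characterized abstractly (\Cref{DR_lemdefLn}); its horizontal part is pinned down by compatibility with the Gauss--Manin connection on the graded pieces together with the extension-class normalization, and your proposed dictionary identifying it with ``comultiplication followed by $\KS$'' is both unproven and contradicted by the explicit formula (\Cref{cor_absoluteconnection}): the $d\tau$-part of $\con{\Ln^\dagger}^{\mathrm{abs}}$ contains genuine Gauss--Manin terms such as $\partial_\tau\eta_1(\tau)-\eta_1(\tau)^2/(2\pi i)$, which are not of that shape. So neither component vanishes for the reason you give.

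What actually forces $\dd^{(1)}\LnD=0$ is a nontrivial differential identity relating the variation of the Kronecker section in the base direction to its derivatives in the two fiber directions --- analytically, the mixed heat equation $2\pi i\,\partial_\tau J(z,w,\tau)=\partial_z\partial_w J(z,w,\tau)$ for the Jacobi theta function trivializing $\tilde{\Po}^\dagger$; no curvature statement about a relative connection implies it. The paper's proof proceeds by (i) reducing, via finite \'etale base change and base-change compatibility of $\Ln$, $\LnD$ and the connections, to the universal elliptic curve with $\Gamma_1(N)$-level structure --- a step you omit, and which also disposes of your base component for free, since the modular curve is one-dimensional and hence $\Omega^2_{\Mcal/\KK}=0$; and then (ii) analytifying and computing $\dd^{(1)}(\LnD)$ explicitly in the theta trivialization, where the relative part of the connection (\Cref{lem_explicit_con}), its horizontal part (\Cref{cor_absoluteconnection}), and the heat equation applied to the expansion coefficients $s_k^D$ (giving $\partial_\tau s_k^D=\frac{1}{2\pi i}(k+1)\partial_z s_{k+1}^D$) produce the cancellation. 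If you insist on working over a general base without this reduction, you would have to establish an algebraic avatar of this heat-equation identity, which is exactly the content your outline assumes away.
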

\begin{proof}
The question is \'{e}tale locally on the base. Indeed, for a Cartesian diagram
	\[
		\begin{tikzcd}
		E_T\ar[r,"\tilde{f}"] \ar[d] & E\ar[d]\\
		T\ar[r,"f"] & S
		\end{tikzcd}
	\]
	with $f$ finite \'{e}tale we have an isomorphism 
	\[
		\tilde{f}^*\left( \Ln\otimes \Omega^1_{E/\KK} \right)\righteq\Lcal_{n,E_T}\otimes  \Omega^1_{E_T/\KK}
	\]
	which identifies $(\tilde{f}^*\LnD)_{n\geq 0}$ with $(\LnD)_{n\geq 0}$. Thus, we may prove the claim after a finite \'{e}tale base change. Now, choose an arbitrary $N>3$. Since we are working over a scheme of characteristic zero, the integer $N$ is invertible and there exists \'{e}tale locally a $\Gamma_1(N)$-level structure. Again, by compatibility with base change it is enough to prove the claim for the universal elliptic curve $\Ecal$  with $\Gamma_1(N)$-level structure over the modular curve $\Mcal$ over $K$. The vanishing of $\dd^{(1)}(\LnD)$ can be checked after analytification. The necessary analytic computation is shifted to \cref{sec_proof}.
\end{proof}

\subsection{The polylogarithm class via the Poincar\'{e} bundle}
The edge morphism $E^{1,0}_2\rightarrow E^1$ in the Hodge-to-de-Rham spectral sequence 
\[
	E_1^{p,q}=H^q(U_D,\Ln^\dagger\otimes_{\Ocal_{U_D}}\Omega^p_{U_D/\KK}) \Rightarrow E^{p+q}=\HdRabs{p+q}{U_D/\KK,\Ln^\dagger}
\]
induces a morphism
\begin{equation}\label{DR_Pol_edge}
	\Gamma\left(U_D,\ker\left(\Ln^\dagger\otimes_{\Ocal_{U_D}}\Omega^1_{U_D/\KK}\xrightarrow[]{\dd^{(1)}} \Ln^\dagger\otimes_{\Ocal_{U_D}}\Omega^2_{U_D/\KK}\right) \right)\xrightarrow[]{[\cdot]} \HdRabs{1}{U_D/\KK,\Ln^\dagger}.
\end{equation}
 We show that $\polD$ is represented by the compatible system $(\LnD)_{n\geq 0}$ under \eqref{DR_Pol_edge}. 
\begin{thm}\label{DR_Pol_thm}
	The $D$-variant of the elliptic polylogarithm in de Rham cohomology is explicitly given by
	\[
		\polD=([\LnD])_{n\geq0}
	\]
	where $[\LnD]$ is the de Rham cohomology class associated with $\LnD$ via \eqref{DR_Pol_edge}. Here, $\LnD$ are the (absolute) infinitesimal Kronecker sections associated to the Kronecker section.
\end{thm}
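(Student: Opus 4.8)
The plan is to exploit the characterization of $\polD$ by its residue. Recall that the localization sequence exhibits the residue map
\[
\Res\colon \varprojlim_n \HdRabs{1}{U_D/\KK,\Logn}\hookrightarrow \prod_{k=0}^\infty \HdRabs{0}{E[D]/\KK,\Sym^k\Hcal_{E[D]}}
\]
as injective, and that $\polD$ is by definition the unique preimage of the degree-zero class $D^2 1_e - 1_{E[D]}$. Hence, once the $[\LnD]$ are shown to assemble into an element of the left-hand pro-system, it suffices to prove
\[
\Res\bigl(([\LnD])_{n\geq 0}\bigr)=D^2 1_e - 1_{E[D]}.
\]
Each $\LnD$ defines a class via the edge morphism \eqref{DR_Pol_edge} by \Cref{DR_liftingLnD}, which guarantees $\dd^{(1)}(\LnD)=0$; and the compatibility of the $[\LnD]$ under the transition maps follows from the compatibility of the infinitesimal Kronecker sections $\lnD$ with the transition maps of the geometric logarithm sheaves together with the functoriality of \eqref{DR_Pol_edge}.

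First I would reduce the cohomological residue to a Poincaré residue. Since $[\LnD]$ is represented by a closed $1$-form with at worst logarithmic poles along $E[D]$, its image under $\Res$ is computed by the Poincaré residue of $\LnD$ along the torsion sections $t\in E[D]$. The lifting map \eqref{eq_liftingmap} built from the Kodaira--Spencer map is $\Ocal_E$-linear and only modifies the component of the form in the base direction $\pi^*\Omega^1_{S/\KK}$, which carries no fibre-transverse logarithmic pole; consequently the residue of the absolute form $\LnD$ along $t$ agrees with the relative Poincaré residue of $\lnD=(\pr_E)_*(\id\times\iota_n)^*\scan^D$. This identifies $\Res([\LnD])$ at $t$ with the push-forward along $\pr_E$ of the residue of the Kronecker section $\scan^D$ along $\{t\}\times\Inf^n_e\Ed$, viewed in $i_t^*\Ln^\dagger\cong e^*\Ln^\dagger=\Ocal_{\Inf^n_e E^\dagger}$.

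Next I would carry out this residue computation degree by degree. Under the splitting $i_D^*\Logn\cong\bigoplus_{k=0}^n\Sym^k\Hcal_{E[D]}$, the degree-zero (augmentation) component of $\Res([\LnD])$ is the residue of the image of $\LnD$ in $\Ocal_E\otimes\Omega^1_{E/S}(E[D])$, which is exactly $(\id\times e)^*\scan^D$. By \Cref{prop_KatoSiegel} this equals $\dd\log\thetaD$, and since $\thetaD$ has divisor $D^2[e]-[E[D]]$ its Poincaré residue along $E[D]$ records the multiplicities $D^2\delta_{t,e}-1$; summing over the torsion sections yields the class $D^2 1_e - 1_{E[D]}$ in degree zero. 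It then remains to show that the higher graded components, lying in $\Sym^k\Hcal_{E[D]}$ for $k\geq 1$, all vanish. The key point is that the residue of $\scan^D$ along $\{t\}\times\Ed$ is, by the very construction of the Kronecker section from $\id_{\Po}$ via the ideal sheaf of the (anti-)diagonal, the canonical rigidifying section of the Poincaré bundle; this section is a pull-back from $S$, so its restriction to the infinitesimal neighbourhood $\Inf^n_e\Ed$ is the constant section $1\in\Ocal_{\Inf^n_e E^\vee}$, which lies entirely in filtration degree zero. Thus the full residue is concentrated in degree zero and equals $D^2 1_e - 1_{E[D]}$, and injectivity of $\Res$ concludes the argument.

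I expect the main obstacle to be this last step: rigorously establishing that the residue of $\scan^D$ transverse to its polar divisor is the constant rigidifying section, so that the $\Sym^{\geq 1}$-components genuinely vanish. This requires controlling the entire infinitesimal expansion of the Kronecker section in the $\Ed$-direction, not merely its leading term, and is most transparently verified after the base-change and $\Gamma_1(N)$-level reductions already used in \Cref{DR_liftingLnD}: one passes to the universal elliptic curve and analytifies, where the explicit Kronecker theta expansion of $\scan$ makes both the degree-zero coefficient and the vanishing of the higher terms manifest. This is precisely the analytic input deferred to \cref{sec_proof}.
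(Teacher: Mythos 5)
Your overall skeleton is the same as the paper's: both arguments reduce, via finite \'etale base change, to the universal elliptic curve with level structure, and both use the injectivity of $\Res$ together with the identification of the degree-zero component with $\dd\log\thetaD$ (\Cref{prop_KatoSiegel}), whose divisor $D^2[e]-[E[D]]$ produces $D^2 1_e-1_{E[D]}$; this is exactly part (A) of the paper's proof. The genuine gap is in your treatment of the components in $\Sym^k\Hcal_{E[D]}$ for $k\geq 1$. The paper disposes of these without any computation: after the reduction to the universal curve $\Ecal/\Mcal$, it invokes \Cref{DR_Log_lemSymHdR}, i.e.\ $\HdRabs{0}{\Mcal,\Sym^k\Hcal}=0$ for $k\geq 1$ (Riemann--Hilbert plus $H^0(\Gamma_1(N),\Sym^k\ZZ^2)=0$), so the ambient groups in which the higher residue components live are zero and there is nothing to check. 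You instead try to prove that the transverse residue of $\scan^D$ at each $t\in E[D]$ is literally the constant rigidifying section $1\in\Ocal_{\Inf^n_e E^\vee}$, and you justify this ``by the very construction of the Kronecker section from $\id_\Po$ via the ideal sheaf of the anti-diagonal.'' That justification only addresses the residue of $\scan$ along $e\times E^\vee$. At a torsion point $t\neq e$ the relevant object is $([D]\times\id)^*\scan$, and its residue is a section of $i_t^*([D]\times\id)^*\Po$; comparing it with the constant section requires composing the canonical identification $i_t^*([D]\times\id)^*\Po\cong(e\times\id)^*\Po$ with the invariance-under-isogenies isomorphism \eqref{eq_invariance} built into the definition of $\lnD$. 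Analytically, the naive residue of $-DJ(Dz,w)$ at $z=(a\tau+b)/D$ carries the factor of automorphy $e^{-2\pi i a w}$, which is non-constant in $w$ and contributes to every $\Sym^k$-degree; showing that this factor is exactly cancelled by the translation-operator identifications is the real content, and it is precisely the kind of computation the paper's vanishing lemma lets one avoid. Your claim is true a posteriori, but as written it restates the conclusion rather than proving it.

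Two further inaccuracies compound this. First, you defer the missing computation to \cref{sec_proof}, but that section proves only \Cref{DR_liftingLnD} (closedness of $\LnD$ via the mixed heat equation); it contains no residue computation at torsion points and cannot supply the input you need. Second, your reduction step asserts that the base-direction component of $\LnD$ ``carries no fibre-transverse logarithmic pole,'' which is false: by \eqref{eq_rene} the $d\tau$-component has coefficients $(k+1)!\,s_{k+1}^D(z,\tau)/2\pi i$, which do have poles along $E[D]$, and the torsion sections themselves move with $\tau$. This can be repaired (the paper's Leray argument over the affine base $\Mcal$ handles the comparison of absolute and relative residues correctly in degree zero), but as stated the reduction is not justified. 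The cleanest fix for your proposal is simply to replace your residue computation in degrees $k\geq 1$ by the cohomological vanishing argument of \Cref{DR_Log_lemSymHdR}.
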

\begin{proof}
First let us recall that $\LnD$ is contained in the kernel of the first differential of the de Rham complex $\Omega^\bullet_{E/\KK}\otimes\Ln^\dagger$, c.f. \Cref{DR_liftingLnD}. Thus, $[\LnD]$ is well-defined. Further, the question is \'{e}tale locally on the base. Indeed, for a Cartesian diagram
	\[
		\begin{tikzcd}
		E_T\ar[r,"\tilde{f}"] \ar[d] & E\ar[d]\\
		T\ar[r,"f"] & S
		\end{tikzcd}
	\]
	with $f$ finite \'{e}tale we have an isomorphism 
	\[
		\tilde{f}^*\left( \Ln^\dagger\otimes \Omega^1_{E/\KK} \right)\righteq\Lcal^\dagger_{n,E_T}\otimes  \Omega^1_{E_T/\KK}
	\]
	which identifies $(\tilde{f}^*\LnD)_{n\geq 0}$ with $(\LnD)_{n\geq 0}$. Furthermore, the canonical map
	\[
		\HdR{1}{U_D,\Ln^\dagger} \righteq \HdR{1}{U_D\times_S T,\Lcal^\dagger_{n,E_T}}
	\]
	is an isomorphism and identifies the polylogarithm classes. Thus, we may prove the claim after a finite \'{e}tale base change. Now, choose an arbitrary $N>3$. Since we are working over a scheme of characteristic zero, the integer $N$ is invertible and there exists \'{e}tale locally a $\Gamma_1(N)$-level structure. Again, by compatibility with base change it is enough to prove the claim for the universal elliptic curve $\Ecal$  with $\Gamma_1(N)$-level structure over the modular curve $\Mcal$. By the defining property of the polylogarithm we have to show
	\[
		\Res\left( ([\LnD]) _{n\geq0}\right)=D^21_{e}-1_{\Ecal[D]}.
	\]
	We split this into two parts:
	\begin{enumerate}
	\item[(A)] $\Res L_0^D= D^21_{e}-1_{\Ecal[D]}$
	\item[(B)] The image of $\Res\left( ([\LnD]) _{n\geq0}\right)$ under
	\[
		\prod_{n=0}^{\infty} \HdRabs{0}{\Ecal[D],\Sym^k\Hcal_{E[D]}}\twoheadrightarrow \prod_{n=1}^{\infty} \HdRabs{0}{\Ecal[D],\Sym^k\Hcal_{E[D]}}
	\]
	is zero.
	\end{enumerate}
	(A): Since $\Mcal$ is affine, the Leray spectral sequence for de Rham cohomology shows that we obtain the localization sequence for $n=1$ by applying $\HdRabs{0}{\Mcal,\cdot}$ to
	\begin{equation*}
	\begin{tikzcd}
		0\ar[r] & \HdR{1}{\Ecal/\Mcal} \ar[r] & \HdR{1}{U_D/\Mcal}\ar[r,"\Res"] & \HdR{0}{\Ecal/\Mcal}.
	\end{tikzcd}
	\end{equation*}
	This exact sequence can be obtained by applying $R\pi_*$ to the short exact sequence
	\[
		\begin{tikzcd}
			0 \ar[r] & \Omega^\bullet_{\Ecal/\Mcal} \ar[r] & \Omega^\bullet_{\Ecal/\Mcal}(\Ecal{[D]}) \ar[r,"\Res"] & (i_{\Ecal[D]})_*\Ocal_{\Ecal[D]}{[-1]}\ar[r] & 0
		\end{tikzcd}
	\]
	of complexes. Thus, it is enough to show $\Res (l_0^D)=D^21_{e}-1_{\Ecal[D]}$. But we have already seen that $l_0^D$ coincides with the logarithmic derivative of the Kato--Siegel function, c.f.~\Cref{prop_KatoSiegel}. The residue condition $\Res (l_0^D)=D^21_{e}-1_{\Ecal[D]}$ follows immediately by one of the defining properties of the Kato--Siegel function. This proves (A).\par
	(B): The following lemma implies the vanishing of
	\[
		\HdRabs{0}{\Ecal[D],\Sym^k\Hcal_{\Ecal[D]}}=\HdRabs{0}{\Mcal,\mathrm{Sym}^k \Hcal}=0,\quad \text{for }k>0.
	\]
	So (B) holds in the universal case for trivial reasons.
\end{proof}

\begin{lem}\label{DR_Log_lemSymHdR}
	Let $N>3$ and $\Ecal/\Mcal$ be the universal elliptic curve with $\Gamma_1(N)$-level structure over $\KK$. Then
	\[
		\HdRabs{0}{\Mcal,\Sym^k \Hcal}=0
	\]
	for all $k\geq 1$.
\end{lem}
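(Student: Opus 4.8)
The plan is to interpret the degree-zero de Rham cohomology as a space of flat global sections and to reduce the statement to a computation of monodromy invariants. By definition
\[
\HdRabs{0}{\Mcal,\Sym^k\Hcal}=\ker\left(\nabla_{\mathrm{GM}}\colon \Gamma(\Mcal,\Sym^k\Hcal)\to \Gamma(\Mcal,\Sym^k\Hcal\otimes_{\Ocal_\Mcal}\Omega^1_{\Mcal/\KK})\right),
\]
the $\KK$-space of global horizontal sections of $(\Sym^k\Hcal,\nabla_{\mathrm{GM}})$. First I would reduce to the case $\KK=\CC$. The modular curve $\Mcal=Y_1(N)$, the universal elliptic curve $\Ecal$, and hence the pair $(\Hcal,\nabla_{\mathrm{GM}})$ with its Gauss--Manin connection are all obtained by base change from the corresponding objects over $\QQ$ (in fact over $\ZZ[1/N]$). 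Since the formation of $\HdRabs{0}{-}$ commutes with the flat base change along $\QQ\hookrightarrow\KK$ and along $\QQ\hookrightarrow\CC$, and since $-\otimes_\QQ\CC$ is faithfully flat, it suffices to prove $\HdRabs{0}{\Mcal,\Sym^k\Hcal}=0$ for $k\geq1$ in the case $\KK=\CC$.

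Over $\CC$ I would pass to the associated analytic local system. A global algebraic horizontal section of $\Sym^k\Hcal$ analytifies to a global horizontal section of the corresponding flat analytic bundle, and the latter is exactly a global section of the local system $\Sym^k\VV$, where $\VV$ is the local system underlying $\HdR{1}{\Ecal/\Mcal}^\vee$. As analytification is injective on global sections over the reduced variety $\Mcal$ and preserves horizontality, we obtain an injection
\[
\HdRabs{0}{\Mcal,\Sym^k\Hcal}\hookrightarrow \left(\Sym^k V\right)^{\pi_1(\Mcal^{\mathrm{an}})},
\]
where $V$ denotes the fiber of $\VV$. Because $N>3$ the group $\Gamma_1(N)$ is torsion free and acts freely on the upper half plane $\HH$, so $\Mcal^{\mathrm{an}}=\Gamma_1(N)\backslash\HH$ is a $K(\Gamma_1(N),1)$ with $\pi_1(\Mcal^{\mathrm{an}})=\Gamma_1(N)$, and the monodromy of $\VV$ is the standard two-dimensional representation $V$ afforded by the tautological inclusion $\Gamma_1(N)\hookrightarrow\mathrm{SL}_2(\ZZ)\subseteq\mathrm{SL}_2(\CC)$ (the choice between cohomology and its dual is irrelevant, since $V\cong V^\vee$ as $\mathrm{SL}_2$-representations via the symplectic pairing). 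It therefore remains to show $(\Sym^k V)^{\Gamma_1(N)}=0$ for $k\geq1$.

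For this final, purely representation-theoretic step I would argue by Zariski density. As a finite-index subgroup of $\mathrm{SL}_2(\ZZ)$, the group $\Gamma_1(N)$ is Zariski dense in $\mathrm{SL}_{2,\CC}$: its Zariski closure is an algebraic subgroup of finite index, and $\mathrm{SL}_2$ being connected admits no proper subgroup of finite index, so the closure is all of $\mathrm{SL}_{2,\CC}$. Hence $(\Sym^k V)^{\Gamma_1(N)}=(\Sym^k V)^{\mathrm{SL}_2(\CC)}$, and the right-hand side vanishes for $k\geq1$ because $\Sym^k V$ is the irreducible $\mathrm{SL}_2$-representation of highest weight $k$, which is nontrivial precisely when $k\geq1$. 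Alternatively, and entirely elementarily, one can exhibit the two elements $\left(\begin{smallmatrix}1&1\\0&1\end{smallmatrix}\right)$ and $\left(\begin{smallmatrix}1&0\\N&1\end{smallmatrix}\right)$ of $\Gamma_1(N)$: the first is a regular unipotent whose invariants in $\Sym^k V$ form the single line spanned by $e_1^k$, while the second moves that line for $k\geq1$, so no nonzero vector is fixed by both. Either way the invariants vanish, which yields the lemma. I expect the only genuinely delicate points to be the bookkeeping ones -- the descent of all objects to $\QQ$ and the compatibility of $\HdRabs{0}{-}$ with analytification -- rather than the concluding $\mathrm{SL}_2$-computation, which is immediate.
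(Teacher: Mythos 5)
Your proposal is correct and takes essentially the same route as the paper's proof: analytify, invoke the Riemann--Hilbert correspondence to identify horizontal sections with monodromy invariants, and reduce to the vanishing of $\left(\Sym^k V\right)^{\Gamma_1(N)}$ for $k\geq 1$, which is exactly the group-cohomological statement $H^0(\Gamma_1(N),\Sym^k\ZZ^2)=0$ the paper cites as obvious. You merely flesh out the details the paper leaves implicit -- the flat base-change reduction to $\KK=\CC$, the injectivity of analytification on horizontal sections, and an explicit two-matrix verification of the invariant-theoretic vanishing (which is airtight even where your Zariski-density sketch tacitly assumes density of $\mathrm{SL}_2(\ZZ)$).
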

\begin{proof}
	One can show the vanishing of $\HdRabs{0}{\Mcal,\Sym^k \Hcal}$ after analytification. Then, the statement boils down, using the Riemann--Hilbert correspondence, to the obvious vanishing result
	\[
		H^0(\Gamma_1(N),\Sym^k\ZZ^2)=0,\quad k\geq 1
	\]
	in group cohomology. Here, $\ZZ^2$ is the regular representation of $\Gamma_1(N)\subseteq \mathrm{Sl}_2(\ZZ)$ on $\ZZ^2$.
\end{proof}

\begin{rem}
The above Theorem gives an explicit representative $(\LnD)_{n\geq 0}$ of the de Rham polylogarithm class. One could ask about uniqueness of this representative. We have already seen that the compatible system of sections $(\LnD)_{n\geq 0}$ is contained in the first non-trivial step
\[
	F^0\Ln^\dagger\otimes\Omega^1_{E/K}=\Ln\otimes\Omega^1_{E/K}
\]
 of the Hodge filtration. With some more effort one can actually prove that the system $(\LnD)_{n\geq 0}$ is the unique system in $F^0\Ln^\dagger\otimes\Omega^1_{E/K}$ representing the de Rham polylogarithm. Let us refer the interested reader to our PhD thesis \cite[Proposition 5.2.12]{PhD}.

\end{rem}

\subsection{Proof of \Cref{DR_liftingLnD} via the mixed heat equation for Theta functions}\label{sec_proof}
In this subsection we will pass to the universal elliptic curve and deduce \Cref{DR_liftingLnD} from the mixed heat equation
\[
		2\pi i \cdot \partial_\tau J(z,w,\tau)=\partial_z\partial_w J(z,w,\tau).
\]
of the Jacobi theta function. Let $\Ecal/\Mcal$ be the universal elliptic curve over $\mathbb{Q}$ with $\Gamma_1(N)$-level structure. The complex manifolds $\Ecal(\CC)$ and $\Mcal(\CC)$ can be explicitly described as
\[
	\Ecal(\CC)=\CC\times\HH/\ZZ^2\rtimes \Gamma_1(N)\twoheadrightarrow \Mcal(\CC)=\HH/\Gamma_1(N).
\]
Recall that we fixed the polarization associated with $\Ocal([e])$ as autoduality isomorphism. The above explicit analytification together with this autoduality isomorphism gives
\[
	\CC\times\CC\times\HH\twoheadrightarrow \Ecal(\CC)\times_{\Mcal(\CC)}\Ecal^\vee(\CC)
\]
as universal covering. Let us write $(z,w,\tau)$ for the coordinates on the universal covering. Using the autoduality isomorphism from \cref{EP_eqAutodual} we can write the rigidified Poincar\'e bundle on $\Ecal\times_\Mcal\Ecal$ as
\begin{align*}
	\Po= \pr_1^* \Ocal_\Ecal([e])^{\otimes-1}\otimes\pr_2^*\Ocal_{\Ecal}([e])^{\otimes-1}\otimes \mu^*\Ocal_\Ecal([e])\otimes \pi_{\Ecal\times \Ecal}^* \om_{\Ecal/\Mcal}^{\otimes -1}.
\end{align*}
Let us write $\tilde{\Po}$ for the pullback of the analytified Poincar\'e bundle to the above universal covering. Then $\tilde{\Po}$ is a trivial line bundle on the complex manifold $\CC\times\CC\times\HH$ and by the above explicit description of $\Po$ in terms of $\Ocal_\Ecal([e])$ it can be trivialized in terms of a suitable theta function. Let us choose the Jacobi theta function
\[
	J(z,w,\tau):=\frac{\vartheta(z+w)}{\vartheta(z)\vartheta(w)}
\]
with
\[
	\vartheta(z):=\exp\left(\frac{z^2}{2}\eta_1(\tau) \right)\sigma(z,\tau)
\]
as trivializing section of the pullback $\tilde{\Po}$ of the Poincar\'e bundle to the universal covering, i.e.
\[
	\Ocal_{\CC\times\CC\times\HH}\righteq \tilde{\Po},\quad 1\mapsto \trv:=\frac{1}{J(z,w,\tau)}\otimes (d z)^\vee.
\] 
	The analytification of the universal vectorial extension $\Ecal^\dagger$ of $\Ecal^\vee$ sits in a short exact sequence (cf. \cite[Ch I, 4.4]{mazur_messing})
\[
		\begin{tikzcd}
			0\ar[r] & R^1(\pi_{\Ecal}^{\textit{an}})_*(2\pi i\ZZ)\ar[r] & \HdR{1}{\Ecal/\Mcal}\ar[r] & \Ecal^{\dagger}(\CC)\ar[r] & 0.
		\end{tikzcd}
	\]
	In particular, the pullback of the geometric vector bundle $\HdR{1}{\Ecal/\Mcal}$ to the universal covering $\HH\rightarrow \Mcal(\CC)$ serves as a universal covering of $\Ecal^\dagger(\CC)$. Choosing coordinates on this universal covering is tantamount to choosing a basis of
	\begin{equation}\label{eq_HdR}
		\HdR{1}{\Ecal/\Mcal}^\vee\righteq \HdR{1}{\Ecal^\vee/\Mcal}.
	\end{equation}
	The isomorphism \eqref{eq_HdR} is canonically induced by Deligne's pairing. Let us choose the differentials of the first and second kind $\omega=[\dd w]$ and $\eta=[ \wp(w,\tau) \dd w]$ as generators of $\HdR{1}{\Ecal^\vee/\Mcal}$ and denote the resulting coordinates by $(w,v)$. We can summarize the resulting covering spaces in the following commutative diagram:
	\[
		\begin{tikzcd}
			\CC^2\times\HH\ar[r,"\pr_1"]\ar[d] & \CC\times\HH \ar[d]\\
			\Ecal^{\dagger}(\CC)\ar[r] & \Ecal^\vee(\CC).
		\end{tikzcd}	
	\]
	The pullback of the Poincar\'e bundle $\Po^\dagger$ to $\Ecal\times \Ecal^\dagger$ is equipped with a canonical integrable $\Ecal^\dagger$-connection
	\[
		\con{\dagger}:\Po^\dagger\rightarrow \Po^\dagger\otimes_{\Ocal_{\Ecal\times \Ecal^\dagger}} \Omega^1_{\Ecal\times \Ecal^\dagger/\Ecal^\dagger}.
	\]	
	Let us write $\tilde{\Po}^\dagger$ for the pullback of the Poincar\'e bundle to the universal covering. The trivializing section $\trv^\dagger$ of $\tilde{\Po}$ induces a trivializing section $\trv^\dagger$ of $\tilde{\Po}^\dagger$ via pullback:
	\begin{equation}\label{eq_PoC1}
		\Ocal_{\CC\times\CC^2\times\HH}\righteq \tilde{\Po}^\dagger,\quad 1\mapsto \trv^\dagger.
	\end{equation}
	Let us write $\tilde{\Lcal}^\dagger_1$ for the pullback of $\Lcal^\dagger_1:=\pi_*(\Po^\dagger|_{\Ecal\times_\Mcal\Inf^1_e\Ecal^\dagger})$ along the universal covering
	\[
		\tilde{\pi}\colon\tilde{\Ecal}=\CC\times\HH\twoheadrightarrow	\Ecal(\CC).
	\]	
	Using $\Inf^1_e\Ecal^\dagger=\Ocal_\Mcal\oplus \HdR{1}{\Ecal^\vee/\Mcal}$ the trivialization \eqref{eq_PoC1} induces
	\[
		\triv: \tilde{\Lcal}^\dagger_1 \righteq \Ocal_{\tilde{\Ecal}}\oplus \tilde{\pi}^*\HdR{1}{\Ecal^\vee/\Mcal}.
	\] 
	Recall, that the higher de Rham logarithm sheaves were defined by tensor symmetric powers of the first de Rham logarithm sheaf.	The generators $\omega=[\dd w], \eta=[\wp(w,\tau)\dd w] $ of $\tilde{\Hcal}:=\tilde{\pi}^*\HdR{1}{\Ecal^\vee/\Mcal}$ together with the isomorphism
	\[
		\triv\colon\tilde{\Lcal}_n^\dagger\righteq \TSym^n\tilde{\Lcal}^\dagger_1 \cong \TSym^n (\Ocal_{\tilde{\Ecal}}\oplus \tilde{\Hcal})=\bigoplus_{i=0}^n \TSym^i \tilde{\Hcal}
	\]
	induce a decomposition
	\[
		\triv\colon\tilde{\Lcal}_n^\dagger\righteq \bigoplus_{i+j\leq n} \tilde{\omega}^{[i,j]}\cdot \Ocal_{\tilde{\Ecal}}
	\]
	with $\tilde{\omega}^{[i,j]}:=\triv^{-1}(\omega^{[i]}\cdot \eta^{[j]} )$. Here, $(\cdot)^{[i]}$ denotes the canonical divided power structure on the algebra of tensor symmetric powers.
	\begin{lem}[{\cite[(3.4.16)]{rene}}]\label{lem_explicit_con} In terms of this decomposition the connection $\con{\Ln^\dagger}$ is given by
	\begin{align*}
			\con{\Ln^\dagger}(\tilde{\omega}^{[i,j]})&=-(i+1)\eta_1(\tau)\cdot \tilde{\omega}^{[i+1,j]} \otimes \dd z+(j+1)\tilde{\omega}^{[i,j+1]}  \otimes \dd z
		\end{align*}
	with the convention that $\tilde{\omega}^{[i,j]}=0$ if $i+j>n$. Here, $\eta_1(\tau)$ is the "period of the second kind" $\eta_1(\tau)=\zeta(z,\tau)-\zeta(z+1,\tau)$.
	\end{lem}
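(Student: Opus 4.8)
The plan is to reduce the formula for general $n$ to the single case $n=1$ by exploiting the divided-power structure of $\Ln^\dagger=\TSym^n\Lcal_1^\dagger$, and then to compute the connection on $\Lcal_1^\dagger$ analytically on the universal covering by means of the Jacobi theta function. For the reduction, recall that the connection induced on a tensor-symmetric power is a derivation for the shuffle product and obeys $\con{\Ln^\dagger}(x^{[k]})=x^{[k-1]}\cdot\con{\Ln^\dagger}(x)$ on divided powers. Under $\triv$ we may write $\tilde{\omega}^{[i,j]}=1^{[n-i-j]}\,\omega^{[i]}\,\eta^{[j]}$; since $\omega,\eta\in\tilde{\Hcal}=\tilde{\pi}^*\HdR{1}{\Ecal^\vee/\Mcal}$ are pulled back from the base, they are horizontal for the relative connection, so $\con{\Lcal_1^\dagger}(\omega)=\con{\Lcal_1^\dagger}(\eta)=0$ and only the factor $1^{[n-i-j]}$ is differentiated. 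Combining this with the multiplication rules $\omega\cdot\omega^{[i]}=(i+1)\omega^{[i+1]}$ and $\eta\cdot\eta^{[j]}=(j+1)\eta^{[j+1]}$, the entire statement follows from the one identity $\con{\Lcal_1^\dagger}(1)=(-\eta_1(\tau)\,\omega+\eta)\otimes\dd z$; the convention $\tilde{\omega}^{[i,j]}=0$ for $i+j>n$ corresponds precisely to the vanishing of $1^{[-1]}$ when $i+j=n$.

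It therefore remains to prove this identity for $\Lcal_1^\dagger$. I would pass to the universal covering $\CC\times\CC^2\times\HH$ with coordinates $(z,w,v,\tau)$ and use the frame $\trv^\dagger$ of $\tilde{\Po}^\dagger$ coming from $\tfrac{1}{J(z,w,\tau)}\otimes(\dd z)^\vee$. Here $1=\tilde{\omega}^{[0,0]}$ is the image of $\trv^\dagger$, while $\omega$ and $\eta$ are, under $\triv$, the images of $w\cdot\trv^\dagger$ and $v\cdot\trv^\dagger$, the fibre coordinates $w,v$ being dual to the chosen de Rham basis $[\dd w],[\wp(w)\dd w]$ of $\HdR{1}{\Ecal^\vee/\Mcal}$. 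Writing the canonical connection in this frame as $\con{\dagger}(\trv^\dagger)=\trv^\dagger\otimes\Theta$ with $\Theta=\Theta(z,w,v,\tau)\,\dd z$, passing to $\Lcal_1^\dagger$ amounts to restricting to $\Inf^1_e\Ecal^\dagger$, i.e.\ expanding $\Theta$ modulo $(w,v)^2$. The constant term vanishes because the fibre over the unit section is $(\Ocal_\Ecal,\dd)$, and the coefficients of $w$ and $v$ are exactly the $\omega$- and $\eta$-components of $\con{\Lcal_1^\dagger}(1)$. Thus the claim is equivalent to $\Theta\equiv(-\eta_1(\tau)\,w+v)\,\dd z\pmod{(w,v)^2}$.

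The coefficient of $v$ records the normalization of the vectorial coordinate and produces the term $\eta\otimes\dd z$. The main obstacle is the coefficient $-\eta_1$ of $w$. Making $\trv^\dagger$ horizontal introduces the logarithmic derivative of the trivializing theta function, and using $\tfrac{\vartheta'}{\vartheta}(z)=z\eta_1+\zeta(z)$ together with $\zeta'=-\wp$ one finds the expansion $\partial_z\log J=w\eta_1-w\wp(z)+O(w^2)$. The apparent difficulty is the $z$-dependent term $w\wp(z)\,\dd z$, which is a second-kind form on $\Ecal$; the crux of the lemma is that it is absorbed when the fibre coordinate $v$ is expressed through the de Rham class $\eta=[\wp(w)\dd w]$, via the autoduality $\lambda\colon\Ecal\righteq\Ecal^\vee$ and the Deligne/cup-product pairing identifying the second-kind periods. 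What survives is the $z$-independent contribution governed by the quasi-period $\eta_1$, yielding the coefficient $-\eta_1$. Verifying this cancellation and fixing the precise constants — essentially tracking the first- and second-kind periods of the theta function through the comparison of the analytic coordinates with the algebraic basis $\{\omega,\eta\}$ — is the one step requiring genuine analytic input, and it is where I expect the real work to lie; the remainder is the formal bookkeeping of the first paragraph.
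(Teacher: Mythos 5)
Your first paragraph (reduction to $n=1$ via the divided-power/derivation structure of $\TSym^n\Lcal_1^\dagger$) and your setup on the universal covering match the paper's proof, and your reformulation of the claim --- that it is equivalent to the congruence $\Theta\equiv(-\eta_1(\tau)\,w+v)\,\dd z \pmod{(w,v)^2}$ for the connection form of $\con{\dagger}$ in the frame $\trv^\dagger$ --- is correct. The problem is that you never determine $\Theta$: you explicitly defer this step (``where I expect the real work to lie''), yet it carries the entire content of the lemma. In the paper this step is not open-ended analysis but a citable input, namely Katz's explicit formula for the universal connection on $\Po^\dagger$ \cite[Thm.~C.6~(1)]{katz_eismeasure}:
\[
	\con{\dagger}(\trv^\dagger)=\Bigl[-\frac{\partial_z J(z,w)}{J(z,w)}+\bigl(\zeta(z+w)-\zeta(z)+v\bigr)\Bigr]\trv^\dagger\otimes\dd z.
\]
Without this formula (or an equivalent derivation of the universal connection in the theta trivialization) your outline cannot produce the coefficient $1$ of $v$ --- which you simply assert ``records the normalization of the vectorial coordinate'' --- nor eliminate the second-kind term, so both halves of the identity $\con{\Lcal_1^\dagger}(\tilde{\omega}^{[0,0]})=-\eta_1(\tau)\,\tilde{\omega}^{[1,0]}\otimes\dd z+\tilde{\omega}^{[0,1]}\otimes\dd z$ remain unproved.

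Moreover, your proposed mechanism for the cancellation mislocates its source. The term $w\wp(z)$ in your expansion of $\partial_z\log J$ is not ``absorbed'' by re-expressing the fibre coordinate $v$ through autoduality and the Deligne pairing; rather, the universal connection itself contains the additional summand $\zeta(z+w)-\zeta(z)+v$, which encodes how the connection varies in the $\Ecal^\dagger$-direction. Since $\partial_z\log J(z,w)=w\,\eta_1(\tau)+\zeta(z+w)-\zeta(z)$ identically, the cancellation is exact: $\con{\dagger}(\trv^\dagger)=\bigl(v-w\,\eta_1(\tau)\bigr)\,\trv^\dagger\otimes\dd z$ on the nose, not merely modulo $(w,v)^2$, and no $\wp(z)$-term ever has to be tracked through a comparison of first- and second-kind periods. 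The lemma then follows by restriction to $\Inf^1_e\Ecal^\dagger$, exactly as in your second paragraph. So the gap is concrete: supply Katz's formula (or rederive it), after which your bookkeeping goes through; as written, the load-bearing step is missing, and the heuristic you offer in its place does not describe what actually happens.
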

	\begin{proof}
		The horizontality of the isomorphism
		\[
			\Ln^\dagger\righteq \TSym^n\Lcal_1^\dagger
		\]	
		reduces us to prove the statement in the case $n=1$. Since the restriction of $\con{\Lcal^\dagger_1}$ to $\Hcal_E$ is trivial, we get
		\[
			\con{\Lcal^\dagger_1}(\tilde{\omega}^{[1,0]})=\con{\Lcal^\dagger_1}(\tilde{\omega}^{[0,1]})=0
		\]
		and it remains to prove
		\[
			\con{\Lcal^\dagger_1}(\tilde{\omega}^{[0,0]})=-\eta_1(\tau)\cdot \tilde{\omega}^{[1,0]} \otimes \dd z+\tilde{\omega}^{[0,1]}  \otimes \dd z.
		\]
		The connection $\con{\Lcal^\dagger_1}$ is induced from the connection $\con{\dagger}$ on $\Po^\dagger$. The explicit description of the connection in \cite[Thm. C.6 (1)]{katz_eismeasure} yields immediately the formula:
		\begin{align*}
			\con{\dagger}(\trv^\dagger)&=\left[-\frac{\partial_z J(z,w)}{J(z,w)} + (\zeta(z+w)-\zeta(z)+v)) \right] \trv^\dagger\otimes \dd z
		\end{align*}
		Using
		\begin{align*}
			\frac{\partial_z J(z,w)}{J(z,w)}&=\partial_z \log J(z,w)=\\
			&=w\cdot\eta_1(\tau)+\zeta(z+w)-\zeta(z)
		\end{align*}
		we get
		\begin{align*}
			\con{\dagger}(\trv^\dagger)&=\left( v- w\cdot\eta_1(\tau)  \right) \trv^\dagger\otimes \dd z.
		\end{align*}
		Restricting this to the first infinitesimal neighborhood gives:
		\[
			\con{\Lcal^\dagger_1}(\tilde{\omega}^{[0,0]})=-\eta_1(\tau)\cdot \tilde{\omega}^{[1,0]} \otimes \dd z+\tilde{\omega}^{[0,1]}  \otimes \dd z.
		\]
	\end{proof}
	\begin{cor}\label{cor_absoluteconnection}
		The absolute connection on $\Lcal_n^\dagger$ is given by the formula
		\begin{align*}
			\con{\Ln^\dagger}^{\text{abs}}(\tilde{\omega}^{[k,j]})&=\Big(-(k+1)\eta_1(\tau)\cdot \tilde{\omega}^{[k+1,j]}+(j+1)\tilde{\omega}^{[k,j+1]}\Big)  \otimes \dd z + \\
			&+\Big( -\frac{\eta_1(\tau)}{2\pi i}k\tilde{\omega}^{[k,j]} + \frac{1}{2\pi i}(j+1)\tilde{\omega}^{[k-1,j+1]}\Big)\otimes d\tau +\\
			&+\Big(\Big(\partial_\tau\eta_1(\tau)
-\frac{\eta_1(\tau)^2}{2\pi i}\Big)(k+1)\tilde{\omega}^{[k+1,j-1]} +\frac{\eta_1(\tau)}{2\pi i}j\tilde{\omega}^{[k,j]}  \Big)\otimes d\tau
		\end{align*}
		with the convention that $\tilde{\omega}^{[i,j]}=0$ if $i+j>n$ or $i,j<0$.
	\end{cor}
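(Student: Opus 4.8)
\emph{Proof strategy (plan).} The plan is to reduce the computation to the first logarithm sheaf and then extract the only genuinely new piece of data, namely the part of the connection in the base direction $\dd\tau$. Since the comultiplication maps $\Lcal_{n}^\dagger\to\Lcal_{k}^\dagger\otimes\Lcal_{l}^\dagger$ are morphisms in $\VIC{\Ecal/\KK}$, the isomorphism $\Ln^\dagger\cong\TSym^n\Lcal_1^\dagger$ is horizontal for the \emph{absolute} connection and not merely for the relative one; hence $\con{\Ln^\dagger}^{\text{abs}}$ is the derivation induced by its restriction to $\Lcal_1^\dagger$. By the divided-power Leibniz rule $\con{}(x^{[m]})=x^{[m-1]}\cdot\con{}(x)$ it then suffices to compute the connection on the three generators $\tilde{\omega}^{[0,0]},\tilde{\omega}^{[1,0]},\tilde{\omega}^{[0,1]}$ of $\Lcal_1^\dagger$, and a direct divided-power computation reproduces the coefficients $(k+1),(j+1)$ and all the index shifts in the asserted formula. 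So I only need the case $n=1$.

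For $n=1$ the $\dd z$-part is exactly \Cref{lem_explicit_con}: one has $\con{\Lcal_1^\dagger}^{\text{abs}}(\tilde{\omega}^{[0,0]})=(-\eta_1(\tau)\tilde{\omega}^{[1,0]}+\tilde{\omega}^{[0,1]})\otimes\dd z$ along $\dd z$, while $\tilde{\omega}^{[1,0]}$ and $\tilde{\omega}^{[0,1]}$ are $\dd z$-horizontal. The only new content is the $\dd\tau$-part. To obtain it I would pass to the universal cover $\CC_z\times\CC^2_{w,v}\times\HH_\tau$ and use the full expression for the connection on the trivializing section $\trv^\dagger=J(z,w,\tau)^{-1}\otimes(\dd z)^\vee$ from Katz \cite[Thm. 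C.6]{katz_eismeasure}; beyond the relative term already used in \Cref{lem_explicit_con}, this carries a $\dd\tau$-contribution governed by $\partial_\tau\log J$. Here the mixed heat equation $2\pi i\,\partial_\tau J=\partial_z\partial_w J$ is the essential tool, since it rewrites $\partial_\tau\log J$ in terms of the $z$- and $w$-derivatives already appearing in the relative computation and thereby makes the $\dd\tau$-coefficient explicit.

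Restricting to $\Inf^1_e\Ecal^\dagger$ and using $\Inf^1_e\Ecal^\dagger=\Ocal_\Mcal\oplus\HdR{1}{\Ecal^\vee/\Mcal}$, I would read off the two ingredients of the $\dd\tau$-part. On the subbundle $\tilde{\Hcal}=\langle\tilde{\omega}^{[1,0]},\tilde{\omega}^{[0,1]}\rangle$ the absolute connection is the pullback of the Gauss--Manin connection on $\HdR{1}{\Ecal^\vee/\Mcal}$; in the basis $\omega=[\dd w],\ \eta=[\wp(w,\tau)\dd w]$ this gives $\con{}(\tilde{\omega}^{[1,0]})=\bigl(-\tfrac{\eta_1(\tau)}{2\pi i}\tilde{\omega}^{[1,0]}+\tfrac{1}{2\pi i}\tilde{\omega}^{[0,1]}\bigr)\otimes\dd\tau$ and $\con{}(\tilde{\omega}^{[0,1]})=\bigl((\partial_\tau\eta_1(\tau)-\tfrac{\eta_1(\tau)^2}{2\pi i})\tilde{\omega}^{[1,0]}+\tfrac{\eta_1(\tau)}{2\pi i}\tilde{\omega}^{[0,1]}\bigr)\otimes\dd\tau$, which are precisely the index-preserving and index-shifting $\dd\tau$-terms of the claim. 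On the marked section $\tilde{\omega}^{[0,0]}$ the $\partial_\tau\log J$ contribution and the variation of the frame $\omega,\eta$ cancel, so its $\dd\tau$-part vanishes.

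The main obstacle is exactly this $\dd\tau$-computation on the marked section $\tilde{\omega}^{[0,0]}$. It is worth noting that flatness alone does not settle it: imposing integrability of $\con{\Ln^\dagger}^{\text{abs}}$ together with the known $\dd z$-part and the Gauss--Manin action on $\tilde{\Hcal}$ forces only the $\tilde{\omega}^{[0,0]}$-component of $\con{}_\tau(\tilde{\omega}^{[0,0]})$ to vanish, while leaving its $\tilde{\Hcal}$-valued component free. Hence one genuinely needs the explicit theta-function computation, i.e.\ the heat equation, to show that this $\tilde{\Hcal}$-component is zero. Once the three generators are pinned down, the divided-power Leibniz rule of the first paragraph delivers the general formula for all $k,j$, with the stated convention $\tilde{\omega}^{[i,j]}=0$ whenever $i+j>n$ or $i,j<0$.
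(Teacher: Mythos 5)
Your reduction to $n=1$ and your use of the Gauss--Manin action on $\tilde{\Hcal}$ parallel the paper, and your observation that integrability alone leaves the $\tilde{\Hcal}$-valued part of the $\dd\tau$-component of $\con{\Lcal_1^\dagger}^{\text{abs}}(\tilde{\omega}^{[0,0]})$ undetermined is correct and well taken. But the step you propose for closing exactly that gap does not exist as described: the connection $\con{\dagger}$ on $\Po^\dagger$, and hence Katz's formula \cite[Thm.~C.6]{katz_eismeasure} as quoted in \Cref{lem_explicit_con}, is an \emph{$\Ecal^\dagger$-relative} connection; it differentiates only in the fibre direction, and its value on $\trv^\dagger$ is a pure $\dd z$-form. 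There is no ``$\dd\tau$-contribution governed by $\partial_\tau\log J$'' to read off, so the mixed heat equation has nothing to act on here. More fundamentally, the object you are trying to compute, $\con{\Ln^\dagger}^{\text{abs}}$, is not induced by any connection on the Poincar\'e bundle: it is defined abstractly in \Cref{DR_lemdefLn} as the unique $\KK$-connection restricting to $\con{\Ln^\dagger}$ and satisfying the universal property of \Cref{log_univ}. Any ``direct computation'' on the universal cover therefore presupposes a concrete description of this abstractly defined connection, and producing such a description is precisely the content of the corollary; without it, your cancellation argument for $\tilde{\omega}^{[0,0]}$ is circular.

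The paper closes the gap in the opposite direction: it takes the asserted formulas as a \emph{candidate} connection $\tilde{\nabla}_{\Lcal_1^\dagger}^{\text{abs}}$, checks that it is integrable, checks that the splitting $e^*\Lcal_1^\dagger\cong\Ocal_\Mcal\oplus\Hcal$ is horizontal for it (by comparison with the Gauss--Manin formulas \cite[A1.3.8]{katz_padic_properties}), and checks via $\res_\Mcal(\tilde{\nabla}_{\Lcal_1^\dagger}^{\text{abs}})=\con{\Lcal_1^\dagger}$ and \Cref{lem_explicit_con} that its extension class maps to $\id_\Hcal$ as required by \Cref{def_log}; the uniqueness statement in \Cref{DR_lemdefLn} then forces $\tilde{\nabla}_{\Lcal_1^\dagger}^{\text{abs}}=\con{\Lcal_1^\dagger}^{\text{abs}}$. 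This characterization argument is exactly what the universal-property definition makes available, and it needs no theta-function input at all: in the paper the heat equation enters only later, in the proof of \Cref{DR_liftingLnD}, not in the proof of this corollary. If you insist on a computational route, you would first have to redo Scheider's analytic construction of the absolute connection on the universal covering and verify that it satisfies the universal property --- the long path that the paper's argument was designed to avoid.
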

	\begin{proof}
		By the horizontality of
		\[
			\Lcal_n^\dagger \righteq \TSym^n\Lcal_1^\dagger
		\]
		it is enough to prove the statement in the case $n=1$. In the following, let us write $\tilde{\nabla}_{\Ln^\dagger}^{\text{abs}}$ for the connection defined by the formulas in the statement. So, in the case $n=1$ these formulas reduce to
		\begin{align*}
			\tilde{\nabla}_{\Lcal_1^\dagger}^{\text{abs}}(\tilde{\omega}^{[0,0]})&= (-\eta_1(\tau)\cdot \tilde{\omega}^{[1,0]}+\tilde{\omega}^{[0,1]})\otimes d z \\
			\tilde{\nabla}_{\Lcal_1^\dagger}^{\text{abs}}(\tilde{\omega}^{[0,1]})&= \left(\partial_\tau\eta_1(\tau)-\frac{\eta_1(\tau)^2}{2\pi i}\right)\tilde{\omega}^{[1,0]}\otimes d \tau +  \frac{\eta_1}{2\pi i}\tilde{\omega}^{[0,1]}\otimes d\tau \\
			\tilde{\nabla}_{\Lcal_1^\dagger}^{\text{abs}}(\tilde{\omega}^{[1,0]})&= -\frac{\eta_1(\tau)}{2\pi i}\tilde{\omega}^{[1,0]}\otimes d\tau+\frac{1}{2\pi i}\tilde{\omega}^{[0,1]}\otimes d\tau.
		\end{align*}
		A straightforward calculation shows that these formulas define an integrable holomorphic connection on $\Lcal_1^\dagger$. Let us now verify, that $(\Lcal_1^\dagger,\tilde{\nabla}_{\Lcal_1^\dagger}^{\text{abs}})$ represents the extension class of the first logarithm sheaf. The logarithm sheaf splits after pullback along $e$:
		\[
			e^*\Lcal_1^\dagger\righteq \Ocal_\Mcal\oplus\Hcal.
		\]
		The section $e^*\tilde{\omega}^{[0,0]}$ is a generator of $\Ocal_\Mcal$ while $e^*\tilde{\omega}^{[0,1]}=\eta$ and $e^*\tilde{\omega}^{[1,0]}=\omega$ form a basis of $\Hcal$. Let us first check, that this splitting is horizontal if we equip the left hand side with $e^*\tilde{\nabla}_{\Lcal_1^\dagger}^{\text{abs}}$, $\Ocal_\Mcal$ with the derivation $d\colon \Ocal_\Mcal \rightarrow \Omega^1_\Mcal$ and $\Hcal$ with the Gauss--Manin connection $\nabla_{\mathrm{GM}}$. The Gauss--Manin connection is given by the formulas (see for example \cite[A1.3.8]{katz_padic_properties})
		\begin{align*}
			\nabla_{\mathrm{GM}}(\eta)&= \left(\partial_\tau\eta_1(\tau)-\frac{\eta_1(\tau)^2}{2\pi i}\right)\omega\otimes d \tau +  \frac{\eta_1}{2\pi i}\eta\otimes d\tau \\
			\nabla_{\mathrm{GM}}(\omega)&= -\frac{\eta_1(\tau)}{2\pi i}\omega\otimes d\tau+\frac{1}{2\pi i}\eta\otimes d\tau.
		\end{align*}
		Comparing this to the defining formulas for $\tilde{\nabla}_{\Lcal_1^\dagger}^{\text{abs}}$ shows immediately the horizontality of the splitting along $e$. Stated differently, we have shown that the extension class $[(\Lcal_1^\dagger, \tilde{\nabla}_{\Lcal_1^\dagger}^{\text{abs}})]$ maps to zero under the map:
		\[
			\Ext^1_{\VIC{\Ecal/\CC}}(\Ocal_\Ecal, \Hcal_\Ecal)\xrightarrow{e^*} \Ext^1_{\VIC{\Mcal/\CC}}(\Ocal_\Mcal, \Hcal).
		\]
		It remains to show, that $[(\Lcal_1^\dagger, \tilde{\nabla}_{\Lcal_1^\dagger}^{\text{abs}}))]$ maps to $\id_\Hcal$ under the map
		\[
			\Ext^1_{\VIC{\Ecal/\CC}}(\Ocal_\Ecal, \Hcal_\Ecal) \rightarrow \Hom_{\VIC{\Mcal/\CC}}(\Ocal_\Mcal, \Hcal\otimes\Hcal^\vee)
		\]		
		appearing in the defining property of the first logarithm sheaf in \Cref{def_log}. Restricting the absolute connection relative $\Mcal$ gives us vertical maps making the diagram 
		\[
			\begin{tikzcd}
				\Ext^1_{\VIC{\Ecal/\CC}}(\Ocal_\Ecal, \Hcal_\Ecal) \ar[r]\ar[d,"\res_\Mcal"] & \Hom_{\VIC{\Mcal/\CC}}(\Ocal_\Mcal, \Hcal\otimes\Hcal^\vee) \ar[d,hook]\\
				\Ext^1_{\VIC{\Ecal/\Mcal}}(\Ocal_\Ecal, \Hcal_\Ecal) \ar[r]  & \Hom_{\Ocal_\Mcal}(\Ocal_\Mcal, \Hcal\otimes\Hcal^\vee)
			\end{tikzcd}
		\]
		commute. The defining formulas for $\tilde{\nabla}_{\Lcal_1^\dagger}^{\text{abs}}$ together with \Cref{lem_explicit_con} yield $\res_\Mcal(\tilde{\nabla}_{\Lcal_1^\dagger}^{\text{abs}})=\nabla_{\Lcal_1^\dagger}$. But we already know that $\nabla_{\Lcal_1^\dagger}=\res_\Mcal(\nabla_{\Lcal_1^\dagger}^{\text{abs}})$, hence $(\Lcal_1^\dagger, \nabla_{\Lcal_1^\dagger})$ maps to $\id_\Hcal$ under the lower horizontal morphism in the above diagram. Since the left vertical map is injective, we deduce that $(\Lcal_1^\dagger, \tilde{\nabla}_{\Lcal_1^\dagger}^{\text{abs}})$ maps to $\id_\Hcal$ under the upper horizontal map.
	\end{proof}
	In terms of our trivializing section $\trv^\dagger$ the Kronecker section $\scan$ expresses as follows:
	\[
		\scan=J(z,w,\tau)\cdot\trv^\dagger\otimes d z.
	\]
	This implies the following formula for the $D$-variant of the Kronecker section:
	\[
		\scan^D=\Big( D^2J(z,Dw,\tau)-DJ(Dz,w,\tau) \Big)\cdot \trv^\dagger\otimes d z.
	\]
	The expansion coefficients
	\[
		D^2J(z,w,\tau)-DJ(Dz,\frac{w}{D},\tau)=\sum_{k=0}^\infty s_k^D(z,\tau)w^k
	\]
	allow us to describe the restriction of $\scan^D$ to the $n$-th infinitesimal neighborhood along $\Ecal$ as
	\[
		\lnD=\sum_{k=0}^n k!s_k^D(z,\tau)\tilde{\omega}^{[k,0]}\otimes dz. 
	\]
	The Kodaira--Spencer isomorphism identifies $dz\otimes dw$ with $\frac{1}{2\pi i}d\tau$ so we get
	\begin{equation}\label{eq_rene}
		L_n^D=\sum_{k=0}^n \left(k!s_k^D(z,\tau)\tilde{\omega}^{[k,0]}\otimes dz + \frac{1}{2\pi i} (k+1)!s_{k+1}^D(z,\tau)\tilde{\omega}^{[k,0]}\otimes d\tau\right). 
	\end{equation}
	In particular, we deduce that the analytification of the $1$-forms $L_n^D$ coincide with the analytic $1$-forms used by Scheider to describe the de Rham realization of the elliptic polylogarithm on the universal elliptic curve. The analytic expression \eqref{eq_rene} is exactly the analytic section of the de Rham logarithm sheaves which was used by Scheider to describe the de Rham realization of the elliptic polylogarithm analytically. We have reduced the purely algebraic statement of \Cref{DR_liftingLnD} to the analytification of the modular curve and identified the objects with the analytic description of Scheider. Thus from here on we can follow the argument in Scheider \cite[Thm. 3.6.2]{rene}. For the convenience of the reader let us nevertheless finish the proof. Indeed, it will be the mixed heat equation 
		\[
		2\pi i \cdot \partial_\tau J(z,w,\tau)=\partial_z\partial_w J(z,w,\tau).
	\]
	 which will be responsible for the vanishing of $\LnD$ under the differential in the de Rham complex. The mixed heat equation implies the formula
	\begin{align}\label{heat_eq}
		\partial_\tau s_k^D=\frac{1}{2\pi i}(k+1)\partial_z s_{k+1}^D
	\end{align}
	and we compute
	\begin{align*}
		&(\con{\Ln^\dagger}^{\text{abs}}\wedge \id +\id\otimes d)(L_n^D)=\\
		=&(\con{\Ln^\dagger}^{\text{abs}}\wedge \id +\id\otimes d)( \sum_{k=0}^n \left(k!s_k^D\tilde{\omega}^{[k,0]}\otimes dz + \frac{1}{2\pi i} (k+1)!s_{k+1}^D\tilde{\omega}^{[k,0]}\otimes d\tau\right)  )=\\
		=& \sum_{k=0}^n \left( k!\partial_\tau s_k^D \tilde{\omega}^{[k,0]}\otimes d\tau \wedge dz +  \frac{1}{2\pi i}(k+1)!\partial_z s_{k+1}^D \tilde{\omega}^{[k,0]}\otimes d z \wedge d\tau   \right)+\\
		& + \sum_{k=0}^n k! s_k^D \con{\Ln^\dagger}^{\text{abs}}(\tilde{\omega}^{[k,0]})\otimes d z + \sum_{k=0}^n \frac{1}{2\pi i} (k+1)! s_{k+1}^D \con{\Ln^\dagger}^{\text{abs}}(\tilde{\omega}^{[k+1,0]})\otimes d \tau\stackrel{\eqref{heat_eq}}{=}\\
		=& \sum_{k=0}^n k! s_k^D \con{\Ln^\dagger}^{\text{abs}}(\tilde{\omega}^{[k,0]})\otimes d z + \sum_{k=0}^n \frac{1}{2\pi i} (k+1)! s_{k+1}^D \con{\Ln^\dagger}^{\text{abs}}(\tilde{\omega}^{[k+1,0]})\otimes d \tau\stackrel{\text{Cor. }\ref{cor_absoluteconnection}}{=}\\
	\end{align*}
	
	\begin{align*}
		=& \sum_{k=0}^n \frac{k!}{2\pi i} s_k^D\cdot \left( -\eta_1(\tau)\cdot k\cdot \tilde{\omega}^{[k,0]}+\tilde{\omega}^{[k-1,1]}  \right) \otimes d\tau\wedge d z +\\
		& + \sum_{k=0}^n \frac{(k+1)!}{2\pi i}  s_{k+1}^D\cdot  \left( -\eta_1(\tau)\cdot (k+1)\cdot \tilde{\omega}^{[k+1,0]}+\tilde{\omega}^{[k,1]}  \right)\otimes dz\wedge d \tau=\\
		=& \sum_{k=1}^n \frac{k!}{2\pi i} s_k^D\cdot \left( -\eta_1(\tau)\cdot k\cdot \tilde{\omega}^{[k,0]}+\tilde{\omega}^{[k-1,1]}  \right) \otimes d\tau\wedge d z -\\
		& - \sum_{k=1}^n \frac{k!}{2\pi i}  s_{k}^D\cdot  \left( -\eta_1(\tau)\cdot k \cdot \tilde{\omega}^{[k,0]}+\tilde{\omega}^{[k-1,1]}  \right)\otimes d\tau\wedge d z=0\\
	\end{align*}
	Thus $\LnD$ is a closed form with respect to the differential of the de Rham complex of $\Ln^\dagger$ and the proof of the Proposition is finished.

\section{The de Rham Eisenstein classes} The aim of this section is to describe the de Rham Eisenstein classes explicitly. We will identify them with cohomology classes associated to certain Eisenstein series. In the following we will use $(\Ln^\dagger,\nabla_{\Ln^\dagger}^{\mathrm{abs}})$ as an explicit model for the de Rham logarithm sheaves. The canonical horizontal isomorphism
\[
	\Ln^\dagger\righteq \TSym^n \Lcal_1^\dagger
\]
together with the horizontal isomorphism
\[
	e^*\Lcal_1^\dagger \righteq \Ocal_S\oplus \Hcal
\]
induces a splitting isomorphism
\[
	e^*\Ln^\dagger\righteq \prod_{k=0}^n \TSym^k_{\Ocal_S} \Hcal.
\]
It might be more common to work with $\Sym^k \HdR{1}{E/S}$ instead of $\TSym^k \HdR{1}{\Ed/S}$. Thus, let us make the following identifications: By the universal property of the symmetric algebra, we have a canonical ring homomorphism
\[
	\Sym^\bullet \Hcal \rightarrow \TSym^\bullet \Hcal,
\]
which is an isomorphism since we are working over a field of characteristic zero. Further, let us use the polarization $E\righteq \Ed$ associated with the ample line bundle $\Ocal_E([e])$ to identify
\[
	\Hcal=\HdR{1}{\Ed/S}\righteq \HdR{1}{E/S}.
\]
With these identifications, we can write the above splitting isomorphism as follows:
\[
	e^*\Lcal_n^\dagger \righteq \prod_{k=0}^n \Sym^k_{\Ocal_S} \HdR{1}{E/S}.
\]
Similarly, we have
\[
	e^*\Lcal_n \righteq \prod_{k=0}^n \om_{E/S}^{\otimes k}.
\]
Further, by invariance under isogenies we have an isomorphism
\[
	\Ln^\dagger\righteq [N]^*\Ln^\dagger.
\]
For a torsion section $s\in E[N](S)$ we get a canonical horizontal isomorphism
\[
	T_s^*\Ln^\dagger \righteq T_s^*[N]^*\Ln^\dagger=[N]^*\Ln^\dagger\righteq \Ln^\dagger.
\]
where $T_s:E\rightarrow E$ is the translation by $s$. Together with the splitting isomorphism we obtain a horizontal isomorphism:
\[
	\triv_s\colon s^*\Ln^\dagger=e^*T_s^*\Ln^\dagger\righteq e^*\Ln^\dagger \righteq \prod_{k=0}^n \Sym^k\HdR{1}{E/S}.
\]
The trivialization map is compatible with the Hodge filtration, i.e. we have
\[
	\triv_s\colon s^*\Ln=e^*T_s^*\Ln\righteq e^*\Ln \righteq \prod_{k=0}^n \om_{E/S}^{\otimes k}.
\]
The map $\triv_s$ induces the \emph{specialization map}:
\[
	s^*\colon \HdRabs{1}{U_D,\Ln^\dagger}\rightarrow \prod_{k=0}^n\HdRabs{1}{S,\Sym^k\HdR{1}{E/S}}.
\]
The aim of this section is to identify $s^*\polD^n$ with cohomology classes of certain Eisenstein series: Let us consider the following analytic Eisenstein series
\[
	F^{(k)}_{(a,b)}(\tau)=(-1)^{k+1}(k-1)!\sum_{(0,0)\neq(m,n)\in\ZZ^2} \frac{1}{(m\tau+n)^k}\zeta_N^{mb-na},\quad \zeta_N:=\exp(\frac{2\pi i}{N}).
\]
and define
\[
	\FKatoD{k}{(a,b)}(\tau)=D^2 F^{(k)}_{(a,b)}(\tau)- D^{1-k}F^{(k)}_{(Da,Db)}(\tau).
\]
These are exactly the Eisenstein series appearing in Kato's Euler system c.f.~\cite[\S 3.6]{kato}. Let $\Ecal/\Mcal$  be the universal elliptic curve over $\QQ$ with $\Gamma(N)$-level structure. Recall that modular forms of level $\Gamma(N)$ and weight $k$ are exactly the sections of $\Gamma(\Mcal,\om_{\Ecal/\Mcal}^{\otimes k})$ which are finite at the cusps. The Kodaira--Spencer map
\[
	\om_{\Ecal/\Mcal}^{\otimes 2} \righteq \Omega^1_{\Mcal}
\]
allows us to associate de Rham cohomology classes to modular forms of weight $k\geq 2$ via:
\[
	\Gamma(\Mcal,\om_{\Ecal/\Mcal}^{\otimes k})\righteq \Gamma(\Mcal,\om_{\Ecal/\Mcal}^{\otimes (k-2)}\otimes \Omega^1_{\Mcal})\stackrel{[\cdot]}{\rightarrow} \HdRabs{1}{\Mcal, \Sym^{k-2}\HdR{1}{\Ecal/\Mcal}}.
\]
For a modular form $f$ of weight $k$ let us write $[f]\in \HdRabs{1}{\Mcal, \Sym^{k-2}\HdR{1}{\Ecal/\Mcal}}$  for its associated cohomology class. The explicit description of the de Rham polylogarithm via the Kronecker section allows us to deduce an explicit formula for the de Rham Eisenstein classes. The de Rham Eisenstein classes have been known previously, see for example \cite[Prop. 3.8]{bannai_kings}, \cite[Thm. 3.8.15]{rene}.
\begin{thm}
Let $\Ecal/\Mcal$ be the universal elliptic curve over $\QQ$ with $\Gamma(N)$-level structure. Let $(0,0)\neq(a,b)\in \left(\ZZ/N\ZZ\right)^2$ and $s=s_{(a,b)}$ be the associated $N$-torsion section. The $D$-variant of the polylogarithm specializes to the following cohomology classes of Eisenstein series:
\[
	s^*\polD^n=\left(\left[\frac{\FKatoD{k+2}{(a,b)}}{k!}\right]\right)_{k=0}^{n}
\]
\end{thm}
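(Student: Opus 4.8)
The plan is to reduce to the universal case and compute analytically, using the explicit representative of the polylogarithm from \Cref{DR_Pol_thm}. By that theorem $\polD^n=[\LnD]$, and since the specialization map is $s^*\colon[\alpha]\mapsto[\triv_s(s^*\alpha)]$ — pullback of a representing form along $s$ followed by the horizontal trivialization $\triv_s$ — it suffices to compute $\triv_s(s^*\LnD)$ as an explicit closed $1$-form with values in $\prod_{k=0}^n\Sym^k\Hcal$ and to identify its class componentwise. Exactly as in the proof of \Cref{DR_Pol_thm}, the whole statement is compatible with finite étale base change, so I may pass to the universal elliptic curve $\Ecal/\Mcal$ with $\Gamma(N)$-level structure and to the universal covering $\CC\times\HH\twoheadrightarrow\Ecal(\CC)$, on which $\LnD$ is given by the analytic expression \eqref{eq_rene}. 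I assume throughout that $s=s_{(a,b)}$ avoids $\Ecal[D]$ (e.g.\ $\gcd(D,N)=1$), so that the pullback of $\LnD$ along $s$ is defined.

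First I would lift $s_{(a,b)}$ to the point $z_0=\tfrac{a\tau+b}{N}$ of the universal covering. Pulling back \eqref{eq_rene} along this section substitutes $z=z_0$ into the coefficient functions $s^D_k(z,\tau)$ and, since $b$ is constant, turns the relative differential into $dz=\tfrac{a}{N}\,d\tau$. Because $\triv_s$ is horizontal and compatible with the Hodge filtration, on the associated graded it identifies the frame $\tilde\omega^{[k,0]}$ of the $\om$-part with $\tfrac{1}{k!}\omega^{k}\in\Sym^k\Hcal$; here one has to check, from the explicit description of $\triv_s$ via the relation $[N]\circ T_s=[N]$ and the translation operators $\Ucal^D_t$, that translation by $s$ acts trivially on $\gr$ and hence contributes no extra factor on the $\Sym^k\Hcal$-components. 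Collecting terms, the $k$-th component of $\triv_s(s^*\LnD)$ becomes
\[
	\left(\frac{a}{N}\,s^D_k(z_0,\tau)+\frac{k+1}{2\pi i}\,s^D_{k+1}(z_0,\tau)\right)\omega^{k}\otimes d\tau .
\]

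The heart of the argument is the identification of the coefficient functions $s^D_k$, evaluated at the torsion point $z_0$, with the Eisenstein series $\FKatoD{k+2}{(a,b)}$. The $s^D_k$ are by definition the Taylor coefficients in $w$ of $D^2J(z,w,\tau)-DJ(Dz,w/D,\tau)$, hence the Eisenstein–Kronecker series obtained from the Laurent expansion of the Kronecker theta function, i.e.\ explicit polynomials in $\zeta(z,\tau),\wp(z,\tau),\wp'(z,\tau),\dots$. I would evaluate these at $z_0=\tfrac{a\tau+b}{N}$ and use the Lipschitz/Poisson summation formula to rewrite the resulting lattice sums over $\ZZ\tau+\ZZ$ as the character sums defining $F^{(k)}_{(a,b)}$; the $D$-variant built into $\scan^D$ then matches the $D$-variant $\FKatoD{k+2}{(a,b)}=D^2F^{(k+2)}_{(a,b)}-D^{1-(k+2)}F^{(k+2)}_{(Da,Db)}$ on the nose. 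The one subtlety is that the individual $s^D_k(z_0,\tau)$ are only quasimodular (they involve $\eta_1(\tau)$) and not holomorphic of a single weight; it is precisely the combination displayed above — together with the mixed heat equation \eqref{heat_eq} and the Kodaira–Spencer identification $dz\otimes dw\mapsto\tfrac{1}{2\pi i}d\tau$ used to produce \eqref{eq_rene} — that makes the quasimodular contributions cancel and leaves the genuine holomorphic weight $k+2$ Eisenstein series. After matching the $\tfrac{1}{k!}$ and $2\pi i$ normalizations, the $k$-th component equals the class $\bigl[\tfrac{1}{k!}\FKatoD{k+2}{(a,b)}\bigr]$ attached to this modular form via Kodaira–Spencer, which is the assertion.

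I expect the main obstacle to be this last identification: extracting the torsion-point values of the Taylor coefficients of the Kronecker theta function and matching them, with the correct normalizations, to the classical Eisenstein series $F^{(k)}$, while verifying that the quasimodular $\eta_1(\tau)$-terms cancel so that a holomorphic weight $k+2$ form remains. This is a concrete but delicate computation with the $\vartheta$- and $\sigma$-functions and their Fourier expansions; the structural input that controls the interplay of the $dz$- and $d\tau$-contributions is exactly the heat equation already exploited in \Cref{sec_proof}.
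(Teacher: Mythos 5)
Your overall skeleton agrees with the paper's proof: reduce via \Cref{DR_Pol_thm} to computing $\triv_s(s^*\LnD)$, pass to the universal curve, and identify the result with Kato's Eisenstein series. But there is a genuine gap at the point you dispose of in one clause, namely the claim that ``translation by $s$ acts trivially on $\gr$ and hence contributes no extra factor on the $\Sym^k\Hcal$-components.'' This inference is false, and it is exactly where the real work of the theorem sits. The isomorphism $\triv_s$ is built from $T_s^*\Ln^\dagger\righteq T_s^*[N]^*\Ln^\dagger=[N]^*\Ln^\dagger\righteq\Ln^\dagger$, and on the universal covering this composite is \emph{not} the naive substitution $z\mapsto z_0$ in the frames $\tilde{\omega}^{[k,0]}$: since $J(Nz+a\tau+b,w,\tau)=e^{-2\pi i a w}J(Nz,w,\tau)$, conjugating through the $[N]$-trick multiplies the trivializing frame by an exponential factor $e^{cw}$ in the infinitesimal variable $w$, with $c$ proportional to $a$. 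Such a factor is unipotent, so it is indeed trivial on the graded pieces; but it mixes the components, sending $\sum_k a_k w^k$ to $\sum_k\bigl(\sum_{j\leq k}\tfrac{c^{k-j}}{(k-j)!}a_j\bigr)w^k$. Hence the $k$-th component of $\triv_s(s^*\LnD)$ contains, besides your two terms, corrections involving $s^D_j(z_0,\tau)$ for all $j<k$. These corrections are not noise: the raw torsion values $s^D_j(z_0,\tau)$ are built from $\zeta(z_0,\tau)+\eta_1(\tau)z_0$, $\wp(z_0,\tau)$, etc., and it is precisely the exponential compensation factor that converts them into the holomorphic character sums $F^{(k)}_{(a,b)}$. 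Dropping it, your displayed formula for the $k$-th component is unjustified, and for $a\neq 0$, $k\geq 1$ it is wrong. The paper never pulls back naively for exactly this reason: it encodes the compensating factors algebraically in the translation operators $\Ucal^{N,D}_{s,t}$ of the Poincar\'e bundle, proves the compatibility $\trans_s\bigl(T_s^*\inv_{[D]}^{-1}(\sigma_t)\bigr)=\inv_{[D]}^{-1}(\sigma_{s,t})$, and only then restricts to infinitesimal neighborhoods and zero sections.

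The second gap is that the heart of the theorem --- identifying the resulting coefficients with $\FKatoD{k+2}{(a,b)}$ --- is not proved but only described (``Lipschitz/Poisson summation \dots the quasimodular contributions cancel''), and you yourself flag it as the main obstacle. In the paper this step is not a from-scratch computation with $\sigma$- and $\vartheta$-expansions: after reducing to the \emph{relative} form $l^D_{n+1}$ (so that only weight-$(k+1)$ modular forms, not mixed $dz$/$d\tau$ components, must be identified), it recognizes the trivialized specialization as the geometric Eisenstein series $E^{k,1}_{s,t}$ of \cite{EisensteinPoincare} via the steps (A)--(C), quotes their analytic evaluation from \cite[Thm.~4.2]{EisensteinPoincare}, and finishes with an elementary reindexing of the lattice sum over $e\neq t\in\Ecal^\vee[D]$. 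If you carry out your plan honestly --- computing the translation operators on the universal covering with their exponential factors, then expanding the compensated translate of the Kronecker theta function at $z_0=\tfrac{a\tau+b}{N}$ --- you will not find a shortcut but will essentially have reproved that theorem; without doing so, the proposal asserts the conclusion rather than establishing it.
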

\begin{proof}
By \Cref{DR_Pol_thm} it suffices to prove
\[
	s^*[\LnD]=\left(\left[\frac{\FKatoD{k+2}{(a,b)}}{k!}\right]\right)_{k=0}^{n}.
\]
Since $\LnD$ is obtained by applying
\begin{equation}
	\Lcal_{n+1}\otimes\Omega^1_{\Ecal/\Mcal}\rightarrow \Lcal_{n}\otimes\Lcal_1\otimes\Omega^1_{\Ecal/\Mcal} \rightarrow \Lcal_{n}\otimes \Omega^1_{\Ecal/\QQ}.
\end{equation}
to $l_{n+1}^D$, we are reduced to prove
\[
	\triv_s(s^*l_{n+1}^D)=\left(\frac{\FKatoD{k+1}{(a,b)}}{k!}\right)_{k=0}^{n+1}.
\]
Our aim is to reduce this claim to the construction of Eisenstein--Kronecker series via the Poincar\'e bundle. As a first step we have to compare the translation operators of the Poincar\'e bundle to the translation operators of the logarithm sheaves. Let us recall the definitions: The definition of the translation isomorphism 
\[
	T_s^*\Ln^\dagger \righteq T_s^*[N]^*\Ln^\dagger=[N]^*\Ln^\dagger\righteq \Ln^\dagger. 
\]
involves twice the invariance under isogenies isomorphism $[N]^*\Ln^\dagger$ which in turn is induced by restricting $\gamma_{\id,[N]}\colon (\id\times[N])^*\Po^\dagger\righteq ([N]\times \id)^*\Po^\dagger$ along $E\times \Inf^n E^\dagger$. More generally, one can define $\gamma_{[N],[D]}$ as the diagonal in the commutative diagram
\[
\begin{tikzcd}[column sep=huge]
	([N]\times [D])^{*}\Po \ar[r,"({[N]}\times \id)^{*}\gamma_{\id,[D]}"]\ar[d,swap,"(\id\times {[D]})^{*}\gamma_{[N],\id}"] & ([ND]\times\id)^{*}\Po \ar[d,"({[D]}\times \id)^{*}\gamma_{[N],\id}"] \\
	(\id\times[DN])^{*}\Po \ar[r,"(\id\times {[N]})^{*}\gamma_{\id,[D]}"] & ([D]\times[N])^{*}\Po.
\end{tikzcd}
\]
Using this, we have defined in \cite[\S 3.3]{EisensteinPoincare} translation isomorphisms $\Ucal^{N,D}_{s,t}$ for $s\in \Ecal[N](\Mcal)$ and $t\in \Ecal^\vee[D](\Mcal)$ on the Poincar\'e bundle $\Po$:
\[
\begin{tikzcd}
\Ucal^{N,D}_{s,t}:=\gamma_{[N],[D]}\circ(T_{s}\times T_t)^*\gamma_{[D],[N]}: (T_s\times T_t)^*([D]\times [N])^*\Po \ar[r] &  ([D]\times [N])^*\Po.
\end{tikzcd}
\]
For sections $\sigma\in\Gamma(U,\Po)$ it is convenient to introduce the notation $U_{s,t}^{N,D}(\sigma):= \Ucal^{N,D}_{s,t}((T_s\times T_t)^*([D]\times [N])^*\sigma)$. Let $\sigma\in\Gamma(U,\Po)$ be a section of the Poincar\'e bundle and $s\in E[N](S)$ and $t\in E^\vee[D](S)$. In particular, we get the section
\[
	\sigma_t:=(\pr_E)_*\left( U^{1,D}_{e,t}(\sigma)\Big|_{E\times_S\Inf^n_e\Ed} \right)
\]
of the geometric logarithm sheaf $[D]^*\Ln$. Similarly, by taking translates by $s$ and $t$ we get the section
\[
	\sigma_{s,t}:=(\pr_E)_*\left( (\id\times [N]^\sharp)^{-1}\left[U^{N,D}_{s,t}(\sigma)\Big|_{E\times_S\Inf^n_e\Ed}\right] \right)
\]
of $[D]^*\Ln$. Here, we wrote $[N]^\sharp$ for the isomorphism of structure sheaves $\Ocal_{\Inf^n_e\Ed}\righteq [N]^*\Ocal_{\Inf^n_e\Ed}$ induced by $N$-multiplication. Let us write
\[
	\inv_{[D]}\colon \Ln\righteq [D]^*\Ln
\]
for the invariance under isogenies isomorphism. Unwinding the definitions it is straightforward to check that the translation operators of the Poincar\'e bundle and the translation operators of the geometric logarithm sheaves are compatible in the following precise sense:
\[
	\trans_s\left( T_s^* \inv_{[D]}^{-1}(\sigma_t) \right)=\inv^{-1}_{[D]}(\sigma_{s,t}).
\]
Applying this to the Kronecker section $\sigma=\scan$ gives us sections
\[
	\sigma_t\in\Gamma\left(\Ecal,[D]^*\Ln\otimes \Omega^1_{E/S}(E[D]))\right)
\]
and
\[
	\sigma_{s,t}\in\Gamma\left(\Ecal,[D]^*\Ln\otimes \Omega^1_{E/S}(T_s^*E[D]))\right).
\]
By the distribution relation \cite[Corollary A.3]{EisensteinPoincare}
	\[
			\sum_{e\neq t\in \Ed[D](S)} U^{D}_{t}(\scan)=(D)^2\cdot \gamma_{1,D}\left( (\id \times [D])^*(\scan)\right)-([D]\times \id)^*(\scan).
	\]
 and the definition of $\lnD$ we obtain
\[
	\lnD=\sum_{e\neq t\in \Ed[D]}(\inv_{[D]}^{-1}\otimes\mathrm{can})(\sigma_t)
\]
where $\mathrm{can}\colon [D]^*\Omega^1_{\Ecal/\Mcal}([e])\righteq  \Omega^1_{\Ecal/\Mcal}(\Ecal[D])$ is the canonical isomorphism. Now, the above formula gives
\begin{equation}\label{eq_trivlnD}
	(\trans_s\otimes\id_{\Omega^1_{\Ecal/\Mcal}})\left( T_s^* \lnD \right)=(\inv_{[D]}\otimes \mathrm{can})^{-1} \left(\sum_{e\neq t\in \Ecal^\vee[D]}\sigma_{s,t}\right).
\end{equation}
In particular, we obtain the formula
\begin{align*}
	(\triv_s\otimes{\id_{\omega_{\Ecal/\Mcal}}})(s^*\lnD)\stackrel{\mathrm{Def.}}{=}&(e^*\triv_e\otimes{\id_{\omega_{\Ecal/\Mcal}}})\circ (e^*\trans_s\otimes{\id_{\omega_{\Ecal/\Mcal}}}) \left( e^*T_s^*\lnD \right)\stackrel{\eqref{eq_trivlnD}}{=}\\
	=&(e^*\triv_e\otimes{\id_{\omega_{\Ecal/\Mcal}}})\circ(e^*\inv_{[D]}\otimes e^*\mathrm{can})^{-1}\left(\sum_{e\neq t\in \Ecal^\vee[D]}e^*\sigma_{s,t}\right)\stackrel{(A)}{=}\\
	=& \left(\bigoplus_{k=0}^{n}(\cdot D^{-k-1})\right)\circ (e^*\triv_e)\left(\sum_{e\neq t\in \Ecal^\vee[D]}e^*\sigma_{s,t}\right)\stackrel{(B)}{=}\\
	=& \left(\frac{D^{-k+1}}{k!}\cdot \sum_{e\neq t\in \Ecal^\vee[D]}(e\times e)^*\left[ ([D]\times[N])^*\nabla_\sharp^{\circ k}U_{s,t}^{N,D}(\scan) \right]\right)_{k=0}^{n}\stackrel{(C)}{=}\\
	=& \left(\frac{D^{-k+1}}{k!}\cdot \sum_{e\neq t\in \Ecal^\vee[D]} E^{k,1}_{s,t}\right)_{k=0}^n
\end{align*}
Here, $(A)$ is induced by the commutativity of
\[
	\begin{tikzcd}[column sep=huge]
		e^*\Ln\otimes e^*\Omega^1_{\Ecal/\Mcal}\ar[d,"\triv_e"] \ar[r,"e^*\inv_{[D]}\otimes e^*\mathrm{can}^{-1}"] & e^*[D]^*\Ln\otimes e^*[D]^*\Omega^1_{\Ecal/\Mcal}\ar[d,"\triv_e"]\\
		\bigoplus_{k=0}^n \om_{\Ecal/\Mcal}^{\otimes k}\otimes\om_{\Ecal/\Mcal} \ar[r,"({[D]^*})^{\otimes k}\otimes({[D]^*})^{-1}"]  & \bigoplus_{k=0}^n \om_{\Ecal/\Mcal}^{\otimes k}\otimes\om_{\Ecal/\Mcal}.
	\end{tikzcd}
\]
and the fact that pullback along $[D]$ induces multiplication by $D$ on the co-tangent space $\om_{\Ecal/\Mcal}$. For the equality $(B)$, let us recall that $\con{\sharp}$ is the universal integrable $\Ecal^\sharp$-connection on the pullback $\Po^\sharp$ of the Poincar\'e bundle to $\Ecal^\sharp\times_\Mcal \Ecal^\vee$, where $\Ecal^\sharp$ denotes the universal vectorial extension of $\Ecal$. The $\Ocal_\Mcal$-linear map 
\[
	(e\times \id_{\Ecal^\vee})^*\con{\sharp}\colon \Ocal_{\Ecal^\vee}\rightarrow \Ocal_{\Ecal^\vee}\otimes_{\Ocal_{\Ecal^\vee}} \Omega^1_{\Ecal^\vee/\Mcal}=\Ocal_{\Ecal^\vee}\otimes_{\Ocal_{\Mcal}}\om_{\Ecal/\Mcal}
\]
is nothing than the invariant derivation on $\Ecal^\vee$. On the other hand, the map
\[
	\triv_e\colon \Ocal_{\Inf^n_e\Ecal^\vee}=(e\times\id_{\Inf^n_e\Ecal^\vee})^*\left(\Po\Big|_{\Ecal\times \Inf^n_e\Ecal^\vee}\right) \righteq \bigoplus_{k=0}^n \om_{\Ecal/\Mcal}^{\otimes k}
\]
coincides with $f\mapsto (e^*(\partial^{\circ k} f)/k!)_{k=0}^n$, i.e. it is given by iteratively applying the invariant derivation
\[
	\partial\colon \Ocal_{\Inf^n_e\Ecal^\vee} \rightarrow \Ocal_{\Inf^{n-1}_e\Ecal^\vee}\otimes_{\Ocal_\Mcal}\om_{\Ecal/\Mcal}
\]
to sections of $\Ocal_{\Inf^n_e\Ecal^\vee}$. Combining these two facts with the definition of $\sigma_{s,t}$ gives $(B)$. The equality $(C)$ is the definition of the geometric modular forms $E^{k,1}_{s,t}$ given in \cite[Def. 4.1]{EisensteinPoincare}.\par 
So far, we have proven that the specialization $s^*\polD^n$ is represented by the cohomology classes associated to the geometric modular forms $D^{-k-1}\cdot E^{1,k}_{s,t}$. It remains to relate $D^{-k-1}\cdot E^{1,k}_{s,t}$ to Kato's Eisenstein series $\FKatoD{k}{(a,b)}$. Let us compare $D^{-k-1}\cdot \sum_{e\neq t\in \Ed[D]}E^{1,k}_{s,t}$ to $\FKatoD{k}{(a,b)}$ on the analytification of the universal elliptic curve.

Let $\Ecal/\Mcal$ be the universal elliptic curve of level $\Gamma_1(N)$. Let us choose the following explicit model for the analytification
\[
	\Ecal(\CC)= ((\ZZ/N\ZZ)^\times \times\CC \times \HH)/  (\ZZ^2\rtimes \Gamma_1(N))
\]
with coordinates $(j,z,\tau)$ on $(\ZZ/N\ZZ)^\times \times\CC \times \HH$. We will use the trivializing section $d z$ of $\om_{\Ecal(\CC)/\Mcal(\CC)}$ to identify classical modular forms with sections of $\om_{\Ecal(\CC)/\Mcal(\CC)}^{\otimes k}$. According to \cite[Thm 4.2]{EisensteinPoincare} and the functional equation of Eisenstein--Kronecker series the geometric modular form $D^{-k-1}\cdot \sum_{e\neq t\in \Ed[D]} E^{1,k}_{s,t}$ corresponds to the classical modular form
\begin{align*}
	&(-1)^{k}k!\cdot D^{-k+1}\sum_{(0,0)\neq(c,d)\in (\ZZ/D\ZZ)^2}\sum_{(0,0)\neq(m,n)\in\ZZ^2} \frac{1}{(m\tau+n+\frac{c\tau}{D}+\frac{d}{D})^{k+1}}\zeta_N^{(mb-na)}=\\
	=&(-1)^{k}k!\cdot D^{-k+1}\sum_{(m,n)\in\ZZ^2\setminus (D\ZZ)^2} D^{k+1} \frac{1}{(m\tau+n)^{k+1}}\zeta_N^{(mb-na)}=\\
	=&D^2 F^{(k+1)}_{(a,b)}(\tau)- D^{2-(k+1)}F^{(k+1)}_{(Da,Db)}(\tau)=\FKatoD{k+1}{(a,b)}(\tau)
\end{align*}
This proves the desired formula
\[
	s^*\polD^n=\left(\left[\frac{\FKatoD{k+2}{(a,b)}}{k!}\right]\right)_{k=0}^{n}.
\]
\end{proof}

\bibliographystyle{amsalpha} 
\bibliography{PaperdeRham}
\end{document}